\numberwithin{equation}{section}
\newtheorem{theorem}{Theorem}[section]
\newtheorem{lemma}[theorem]{Lemma}
\newtheorem{proposition}[theorem]{Proposition}
\newtheorem{corollary}[theorem]{Corollary}
\theoremstyle{definition}
\newtheorem{definition}[theorem]{Definition}
\newtheorem{example}[theorem]{Example}
\newtheorem{remark}[theorem]{Remark}
\newcommand{\id}{{\rm id}}
\newcommand{\End}{{\rm End}}
\newcommand{\Fun}{{\rm Fun}}
\newcommand{\Pro}{{\rm Pro}}
\newcommand{\Hom}{{\rm Hom}}
\newcommand{\Comod}{{\rm Comod}}
\newcommand{\Ind}{{\rm Ind}}
\newcommand{\Rep}{{\rm Rep}}
\newcommand{\sRep}{{\rm sRep}}
\newcommand{\Spec}{{\rm Spec}}
\newcommand{\Vect}{{\rm Vect}}
\newcommand{\g}{\mathfrak{g}}
\newcommand{\h}{\mathfrak{h}}
\newcommand{\surj}{\twoheadrightarrow}
\newcommand{\inj}{\hookrightarrow}
\newcommand{\ot}{\otimes}
\newcommand{\ben}{\begin{enumerate}}
\newcommand{\een}{\end{enumerate}}
\newcommand{\Lie}{{\rm Lie}}
\newcommand{\C}{{\mathcal C}}
\newcommand{\Mod}{\rm Mod}
\numberwithin{equation}{section}
\begin{document}

\title[Module
categories over affine supergroup schemes] {Module categories over affine supergroup schemes}

\author{Shlomo Gelaki}
\address{Department of Mathematics, Iowa State University, 
Ames IA, USA} \email{gelaki@iastate.edu}

\date{\today}

\keywords{affine supergroup scheme; tensor category;
module category; twist; triangular Hopf algebra}

\begin{abstract}
Let $k$ be an algebraically closed field of characteristic $0$ or $p>2$. Let $\mathcal{G}$ be an affine supergroup
scheme over $k$. We classify the indecomposable exact module categories over the tensor category ${\rm sCoh}_{\rm f}(\mathcal{G})$ of (coherent sheaves of) finite dimensional
$\mathcal{O}(\mathcal{G})$-supermodules in terms of $(\mathcal{H},\Psi)$-equivariant coherent sheaves on $\mathcal{G}$. We deduce from it the
classification of indecomposable {\em geometrical} module categories over $\sRep(\mathcal{G})$. When $\mathcal{G}$ is finite, this yields the
classification of {\em all} indecomposable exact module categories over the finite tensor category $\sRep(\mathcal{G})$. In particular, we obtain a
classification of twists for the supergroup algebra $k\mathcal{G}$ of a finite
supergroup scheme $\mathcal{G}$, and then combine it with \cite[Corollary 4.1]{EG3} to classify finite dimensional triangular Hopf algebras with the Chevalley property over $k$.
\end{abstract}

\maketitle

\tableofcontents

\section{Introduction}\label{Introduction}

Let $k$ be an algebraically closed field of characteristic $0$ or $p>2$.
Let $G$ be a finite group scheme over $k$. Consider the finite tensor category ${\rm Coh}(G)$ of finite dimensional $\mathcal{O}(G)$-modules over $k$, and the finite tensor category $\Rep(G)$
of finite dimensional rational representations of $G$ over $k$. In \cite{G} we classified the indecomposable exact
module categories over $\Rep(G)$, generalizing the classification of Etingof and Ostrik \cite{EO} for constant groups $G$. In particular, we obtained the
classification of twists for the group algebra $kG$, reproducing the classification
given by Movshev for constant groups $G$ in zero characteristic \cite{Mov}.

The goal of this paper is to extend \cite{G} to the \emph{super} case, and then combine it with \cite[Corollary 4.1]{EG3} to classify finite dimensional triangular Hopf algebras with the Chevalley property over $k$ (as promised in \cite[Remark 1.5(3)]{EG3}).

Let $\mathcal{G}$ be a finite supergroup scheme over $k$. Following \cite{G}, we first classify the indecomposable exact module categories over $\sRep(\mathcal{O}(\mathcal{G}))$, where $\mathcal{O}(\mathcal{G})$ is the coordinate Hopf superalgebra of $\mathcal{G}$, and then use the fact
that they are in bijection with the indecomposable exact module categories over $\sRep(\mathcal{G})$ \cite{EO} to get the classification of the
latter ones. The reason we approach it in this way is that $\sRep(\mathcal{O}(\mathcal{G}))$ is tensor equivalent to the tensor category
${\rm sCoh}_{\rm f}(\mathcal{G})=\text{sCoh}(\mathcal{G})$ (of coherent sheaves) of \emph{finite dimensional}
$\mathcal{O}(\mathcal{G})$-supermodules with the tensor product of convolution of sheaves, which allows us to use geometric tools and arguments. 

In fact, in Theorem \ref{grsch1} we
classify the indecomposable exact module categories over
${\rm sCoh}_{\rm f}(\mathcal{G})$, where $\mathcal{G}$ is \emph{any} affine supergroup scheme over $k$ (i.e., $\mathcal{G}$ is not necessarily finite). The classification is given in terms of certain $(\mathcal{H},\Psi)$-equivariant coherent sheaves on $\mathcal{G}$ (see Definition \ref{defequiv}). However when
$\mathcal{G}$ is {\em not} finite, not all indecomposable exact module categories
over $\sRep(\mathcal{G})$ are obtained from those over ${\rm sCoh}_{\rm f}(\mathcal{G})$ (see
Theorem \ref{modg} and Remark \ref{notall}); we refer to  those which are as {\em geometrical}. So the classification of exact module categories (even fiber functors) over $\sRep(\mathcal{G})$ for infinite affine supergroup schemes $\mathcal{G}$ remains unknown (even when $\mathcal{G}$ is a linear algebraic group over $\mathbb{C}$, see \cite{G}). 

As a consequence of our results, combined with \cite{AEGN, EO}, we obtain in Corollary \ref{twistsfinite} that gauge equivalence classes of twists
for the supergroup algebra $k\mathcal{G}$ of a \emph{finite} supergroup scheme $\mathcal{G}$ over $k$ are
parameterized by conjugacy classes of pairs $(\mathcal{H},\mathcal{J})$, where $\mathcal{H}\subset \mathcal{G}$ is a closed supergroup subscheme  and $\mathcal{J}$ is a \emph{non-degenerate} twist for $k\mathcal{H}$ (just as in the case of abstract finite groups). Furthermore, using Proposition \ref{quot} we show in Proposition \ref{minond} that a twist for $\mathcal{G}$ is non-degenerate if and only if it is \emph{minimal} (again, as for abstract finite groups). Finally, in Theorem \ref{classtrhas} we classify finite dimensional triangular Hopf algebras with the Chevalley property over $k$.

{\bf Acknowledgments.} The author is grateful to Pavel Etingof for stimulating and helpful discussions.

\section{Preliminaries}\label{prelim}

Throughout the paper we fix an algebraically closed field $k$ of characteristic $0$ or $p>2$. We refer the reader to the book \cite{egno} for the general theory of tensor categories. 

\subsection{Affine supergroup schemes}\label{gsch} 
We refer the reader to, e.g. \cite{W}, for preliminaries on affine group schemes over $k$, and to \cite{MA} for preliminaries on affine supergroup schemes over $k$.

Let $\mathcal{G}$ be an {\em affine supergroup scheme} over $k$, with unit morphism ${\rm e}:\Spec(k)\to \mathcal{G}$, inversion morphism
${\rm i}:\mathcal{G}\to \mathcal{G}$, and multiplication morphism ${\rm m}:\mathcal{G}\times \mathcal{G}\to \mathcal{G}$,
satisfying the usual group axioms. Recall that the coordinate algebra $\mathcal{O}(\mathcal{G})$\footnote{Some authors use $k[\mathcal{G}]$ instead.} of $\mathcal{G}$ is a  supercommutative Hopf superalgebra over $k$, and $\mathcal{G}$ is the functor from the category of supercommutative $k$-superalgebras to the category of groups defined by $R\mapsto \mathcal{G}(R):=\Hom_{{\rm SAlg}}(\mathcal{O}(\mathcal{G}),R)$ (so-called functor of points). Note that any affine supergroup scheme is the  inverse limit of affine supergroup schemes of {\em finite  type}.

A {\em closed} supergroup subscheme $\mathcal{H}$ of $\mathcal{G}$ is the spectrum of the Hopf quotient $\mathcal{O}(\mathcal{H}):=\mathcal{O}(\mathcal{G})/\mathcal{I}(\mathcal{H})$ by a Hopf ideal $\mathcal{I}(\mathcal{H})\subset \mathcal{O}(\mathcal{G})$. The ideal $\mathcal{I}(\mathcal{H})$ is referred to as the {\em defining ideal} of $\mathcal{H}$ in $\mathcal{O}(\mathcal{G})$.
For example, the {\em even part} of $\mathcal{G}$ is the closed group subscheme $\mathcal{G}_0\subset \mathcal{G}$ with the defining ideal $\mathcal{I}(\mathcal{G}_0)=\langle \mathcal{O}(\mathcal{G})_1 \rangle$, i.e., $\mathcal{G}_0$ is an ordinary affine group scheme with coordinate algebra $\mathcal{O}(\mathcal{G}_0)=\mathcal{O}(\mathcal{G})/\langle \mathcal{O}(\mathcal{G})_1 \rangle$. In particular, we have a surjective Hopf algebra map $
\pi:\mathcal{O}(\mathcal{G})\twoheadrightarrow \mathcal{O}(\mathcal{G}_0)$.

Let $\g=\g_0\oplus \g_1$ be the Lie superalgebra of $\mathcal{G}$, i.e., $\g$ is the space of left-invariant derivations of $\mathcal{O}(\mathcal{G})$, $\g_0$ is the space of even derivations of $\mathcal{O}(\mathcal{G})$, and $\g_1$ is the space of odd derivations of $\mathcal{O}(\mathcal{G})$. 
We have 
$\g=(\mathfrak{m}/ \mathfrak{m}^2)^*$, where $\mathfrak{m}\subset \mathcal{O}(\mathcal{G})$ is the kernel of the augmentation map, and $\g_0=\Lie(\mathcal{G}_0)$ is the Lie algebra of $\mathcal{G}_0$. 

Recall that $\mathcal{G}_0$ acts on $\g_1$ via the adjoint action. Let $a:\mathcal{G}_0\times\g_1^*\to \g_1^*$ be the coadjoint action of $\mathcal{G}_0$ on $\g_1^*$. Then 
$\wedge \g_1^*$ is an $\mathcal{O}(\mathcal{G}_0)$-comodule algebra with structure map 
$a^*:\wedge \g_1^*\to \mathcal{O}(\mathcal{G}_0)\ot \wedge\g_1^*$.

Since $\mathcal{O}(\mathcal{G}_0)$ is a quotient Hopf algebra of $\mathcal{O}(\mathcal{G})$, it follows that $\mathcal{O}(\mathcal{G})$ has a canonical structure of a left $\mathcal{O}(\mathcal{G}_0)$-comodule algebra with structure map $(\pi\ot\id)\Delta$. It is known \cite[Theorem 4.5]{MA} that the subalgebra of $\mathcal{O}(\mathcal{G}_0)$-coinvariants in $\mathcal{O}(\mathcal{G})$ is isomorphic to $\wedge \g_1^*$, and that we have a tensor decomposition
\begin{equation}\label{tensdecomp}
\mathcal{O}(\mathcal{G})\cong \wedge \g_1^*\otimes\mathcal{O}(\mathcal{G}_0)  
\end{equation}
of $\mathcal{O}(\mathcal{G}_0)$-supercomodule counital superalgebras. In particular, we have {\em abelian} equivalences
\begin{eqnarray}\label{btensdecomp}
\lefteqn{\text{sRep}(\mathcal{O}(\mathcal{G}))\cong\sRep(\wedge \g_1^*)\boxtimes_{\text{sVect}}\text{sRep}(\mathcal{O}(\mathcal{G}_0))}\\
& \cong & \sRep(\wedge \g_1^*)\boxtimes\text{Rep}(\mathcal{O}(\mathcal{G}_0))
\end{eqnarray}
such that $\text{Rep}(\mathcal{O}(\mathcal{G}_0))$ can be identified with a full tensor subcategory of $\text{sRep}(\mathcal{O}(\mathcal{G}))$ in the obvious way.

Recall that we have
$$\text{Rep}(\mathcal{O}(\mathcal{G}_0))=\text{Coh}_{\rm f}(\mathcal{G}_0)=\bigoplus_{g\in \mathcal{G}_0(k)}\text{Coh}_{\rm f}(\mathcal{G}_0)_{g},$$
where $\text{Coh}_{\rm f}(\mathcal{G}_0)_{g}$ is the abelian subcategory of sheaves supported at $g$, with unique simple 
object $\delta_{g}$ and indecomposable projective object $P_{g}:=\widehat{\mathcal{O}(\mathcal{G}_0)_{g}}$ in the pro-completion category, where $\mathcal{O}(\mathcal{G}_0)_{g}$ is the completion of $\mathcal{O}(\mathcal{G}_0)$ at $g$ \cite[Section 3.1]{G}. Thus by (\ref{btensdecomp}), we have 
\begin{equation}\label{btensdecomp1}
\text{sRep}(\mathcal{O}(\mathcal{G}))\cong\bigoplus_{g\in \mathcal{G}_0(k)}\sRep(\wedge \g_1^*)\boxtimes\text{Coh}_{\rm f}(\mathcal{G}_0)_{g}
\end{equation}
as {\em abelian} categories. 

Recall that closed supergroup subschemes $\mathcal{H}\subset\mathcal{G}$ are in bijection with pairs $(\mathcal{H}_0,\h_1)$, where $\mathcal{H}_0\subset \mathcal{G}_0$ is a closed group subscheme, $\h_1\subset \g_1$ is an $\mathcal{H}_0$-invariant subspace, and $[\h_1,\h_1]\subset \h_0:=\Lie(\mathcal{H}_0)$ (see, e.g., \cite[Section 6.2]{MS}). 

Let $\Psi: \mathcal{G}\times \mathcal{G}\to \mathbb{G}_m$ be a normalized even $2$-cocycle.
Equivalently, $\Psi\in \mathcal{O}(\mathcal{G})\ot \mathcal{O}(\mathcal{G})$ is a twist for $\mathcal{O}(\mathcal{G})$, i.e., $\Psi$ is an  
invertible even element satisfying the equations
$$(\Delta\ot \id)(\Psi)(\Psi\ot 1)=(\id\ot \Delta)(\Psi)(1\ot \Psi),$$
$$(\varepsilon\ot \id)(\Psi)=(\id\ot \varepsilon)(\Psi)=1.$$

Finally, recall that a {\em finite} supergroup scheme $\mathcal{G}$ is an affine supergroup scheme whose function algebra $\mathcal{O}(\mathcal{G})$ is finite dimensional. In this case, $k\mathcal{G}:=\mathcal{O}(\mathcal{G})^*$ is a supercocommutative Hopf superalgebra (called the {\em group  algebra} of $\mathcal{G}$).

\subsection{Module categories over tensor categories.}\label{Module categories over tensor categories}
Let $\mathcal{C}$ be a tensor category over $k$.
Let $\Ind(\mathcal{C})$ and $\Pro(\mathcal{C})$ be the categories of
$\Ind$-objects and $\Pro$-objects of $\mathcal{C}$, respectively. It is well known that the tensor
structure on $\mathcal{C}$ extends to a tensor structure on
$\Ind(\mathcal{C})$ and $\Pro(\mathcal{C})$. However $\Ind(\mathcal{C})$ and $\Pro(\mathcal{C})$ are not rigid, but the rigid structure on $\mathcal{C}$ induces two duality functors $\Pro(\mathcal{C})\to\Ind(\mathcal{C})$ (``continuous dual") and $\Ind(\mathcal{C})\to \Pro(\mathcal{C})$ (``linear dual"), which we shall both denote by $X\mapsto X^*$; they are antiequivalence inverses of each other. It is also known that $\Ind(\mathcal{C})$ has enough injectives. 

Recall that a (left) \emph{module category} $\mathcal{M}$ over $\mathcal{C}$ is a locally finite abelian category equipped with a (left) action $\ot ^{\mathcal{M}}:\mathcal{C}\boxtimes \mathcal{M}\to \mathcal{M}$, such that the bifunctor $\ot ^{\mathcal{M}}$ is bilinear on morphisms and biexact. Recall also that $\mathcal{M}$ is \emph{exact} if any additive module functor $\mathcal{M}\to \mathcal{M}_1$ from $\mathcal{M}$ to any other
$\mathcal{C}$-module category $\mathcal{M}_1$ is exact, and that $\mathcal{M}$ is {\em indecomposable} if $\mathcal{M}$ is not equivalent to a direct sum of two nontrivial module subcategories. It is also known that the $\mathcal{C}$-module structure on $\mathcal{M}$ extends to a module structure on $\Ind(\mathcal{M})$ over $\Ind(\mathcal{C})$. Moreover, $\mathcal{M}$ is exact if and only if 
for any $M\in \mathcal{M}$
and any injective object $I\in \Ind(\mathcal{C})$ (resp., projective object $P\in \Pro(\mathcal{C})$), $I\ot M$ is injective in $\Ind(\mathcal{M})$ (resp., $P\ot M$ is projective in $\Pro(\mathcal{M})$) (see \cite[Propositions 3.11, 3.16]{EO}, \cite[Proposition 2.4]{G}).

Following \cite{EO}, we say that two simple objects $M_1,M_2\in \mathcal{M}$ are \textit{related} if there exists an object $X\in \mathcal{C}$ such that
$M_1$ appears as a subquotient in $X\ot ^{\mathcal{M}}M_2$. This defines an equivalence relation, and $\mathcal{M}$
decomposes into a direct sum $\mathcal{M}=\oplus \mathcal{M}_i$ of indecomposable exact module subcategories indexed by the equivalence classes (see \cite[Lemma 3.8 \&
Proposition 3.9]{EO} and \cite[Proposition 2.5]{G}).

Assume $\mathcal{M}$ is exact. Recall that an object $\delta\in\mathcal{M}$ {\em generates} $\mathcal{M}$ if for
any $M\in \mathcal{M}$ there exists $X\in\mathcal{C}$ such that
$\Hom_{\mathcal{M}}(X\ot ^{\mathcal{M}}\delta,M)\ne 0$.
It is known that $\delta$ generates $\mathcal{M}$ if and only if for any $M\in \mathcal{M}$ there exists $X\in\mathcal{C}$ such that
$M$ is a subquotient of $X\ot ^{\mathcal{M}}\delta$
(cf. \cite{EO}). Thus if 
$\mathcal{M}$ is indecomposable and $\delta$
is simple, then $\delta\in \mathcal{M}$ generates $\mathcal{M}$.

Finally recall that for every two objects $M_1,M_2\in \mathcal{M}$, we have an object $\overline{\Hom}(M_1,M_2)\in \Pro(\mathcal{C})$ satisfying 
$$\Hom_{\mathcal{M}}(M_2,X\ot^{\mathcal{M}} M_1)
\cong\Hom_{\text{Pro}(\mathcal{C})} (\overline{\Hom}(M_1,M_2),X),\,\,X\in \mathcal{C}$$
(the \emph{dual internal \Hom}). For every $M\in \mathcal{M}$, the pro-object
$\overline{\Hom}(M,M)$ has a canonical
structure of a coalgebra. In terms of
internal Hom's \cite{EO}, the algebra $\underline{\Hom}(M,M)$
in $\Ind(\mathcal{C})$ is isomorphic to the dual algebra 
$(\overline{\Hom}(M,M))^*$ under the duality functor $^*:\Pro(\mathcal{C})\to\Ind(\mathcal{C})$. Now if $\mathcal{M}$ is indecomposable and exact, we have a $\mathcal{C}$-module equivalence 
$\mathcal{M}\cong\Comod_{\Pro(\mathcal{C})}(\overline{\Hom}(M,M))$.

\section{The tensor category $\text{sCoh}_{\rm f}(\mathcal{G})$}\label{The tensor category sCohfG}

Let $\mathcal{G}$ be an affine supergroup scheme\footnote{The purely even case is treated in \cite[Section 3.1]{G}.} over $k$, 
and let $$\overline{\mathcal{O}(\mathcal{G})}=\mathcal{O}(\mathcal{G})\times k\langle u \rangle$$ be the Radford's biproduct ordinary Hopf algebra, where $u$ is a grouplike element of order $2$ acting on $\mathcal{O}(\mathcal{G})$ by parity, and
\begin{equation}\label{Radford biproduct ordinary Hopf algebra}
\overline{\Delta}(x)=\sum (x_1\ot u^{|x_2|})\ot (x_2\ot 1)
\end{equation}
for every homogeneous element $x\in \mathcal{O}(\mathcal{G})$, where $\Delta(x)=\sum x_1\ot x_2$. 
Recall that we have an equivalence of tensor categories 
\begin{equation}\label{we have an equivalence of tensor categories}
\Rep(\overline{\mathcal{O}(\mathcal{G})})\cong
\text{sRep}(\mathcal{O}(\mathcal{G})).
\end{equation}
In particular, $\text{Rep}(\mathcal{O}(\mathcal{G}_0))$ is a tensor subcategory of $\Rep(\overline{\mathcal{O}(\mathcal{G})})$.

\begin{definition}
Let $\text{sCoh}_{\rm f}(\mathcal{G})$ (resp., $\text{sQCoh}(\mathcal{G})$) be the tensor category (resp., monoidal category) of finite dimensional (resp., all) representations of the Hopf algebra $\overline{\mathcal{O}(\mathcal{G})}$.
\end{definition}
By definition, we have equivalences of tensor and monoidal categories 
$$\text{sCoh}_{\rm f}(\mathcal{G})\cong\sRep(\mathcal{O}(\mathcal{G}))\,\,\,
\text{and}\,\,\,\text{sQCoh}(\mathcal{G})\cong{\rm SRep}(\mathcal{O}(\mathcal{G})),$$
respectively, where $\sRep(\mathcal{O}(\mathcal{G}))$ and ${\rm SRep}(\mathcal{O}(\mathcal{G}))$ are the categories of \emph{finite dimensional} and \emph{all} representations of the Hopf superalgebra $\mathcal{O}(\mathcal{G})$ on $k$-supervector spaces, respectively. 

We have that $\Ind(\text{sCoh}_{\rm f}(\mathcal{G}))$ is the category of {\em locally finite} quasi-coherent sheaves of $\mathcal{O}(\mathcal{G})$-supermodules (i.e., representations in which every vector generates a finite dimensional subrepresentation). 

\begin{remark}\label{think}
By a {\em quasi-coherent sheaf on $\mathcal{G}$} we will mean a quasi-coherent sheaf  of $\mathcal{O}(\mathcal{G})$-\emph{supermodules}, and by a {\em finite} quasi-coherent sheaf on $\mathcal{G}$ we will mean a quasi-coherent sheaf  of {\em finite dimensional} $\mathcal{O}(\mathcal{G})$-\emph{supermodules}. Note that finite quasi-coherent sheaves on $\mathcal{G}$ are automatically supported on finite sets in $\mathcal{G}_0$. Thus, one can think of $\text{sCoh}_{\rm f}(\mathcal{G})$ and $\text{sQCoh}(\mathcal{G})$ as the $k$-linear abelian categories of finite quasi-coherent sheaves and quasi-coherent sheaves on $\mathcal{G}$, respectively (which explains our notation). In particular, the tensor products in 
$\text{sCoh}_{\rm f}(\mathcal{G})$ and $\text{sQCoh}(\mathcal{G})$ correspond to the convolution product of sheaves
\begin{equation}\label{m_*}
\text{X}\otimes \text{Y}:={\rm m}_*(\text{X}\boxtimes \text{Y})
\end{equation}
(where ${\rm m}_*$ is the direct image functor of ${\rm m}$). Notice that the tensor category $\text{Coh}_{\rm f}(\mathcal{G}_0)$ is identified with the tensor subcategory of $\text{sCoh}_{\rm f}(\mathcal{G})$ consisting of sheaves on which odd elements act trivially.
\end{remark}

We will also consider the following categories.

\begin{definition}
Let $\text{Coh}_{\rm f}(\mathcal{G})$ (resp., $\text{QCoh}(\mathcal{G})$) be the abelian category of finite dimensional (resp., all) representations of the algebra $\mathcal{O}(\mathcal{G})$.
\end{definition}

Note that $\text{Coh}_{\rm f}(\mathcal{G})$ is not a tensor category when $\mathcal{G}$ is not even, and that we have a tensor equivalence 
$\text{sCoh}_{\rm f}(\mathcal{G}_0)\cong\text{Coh}_{\rm f}(\mathcal{G}_0)\boxtimes {\rm sVect}$.
However, we do have the following.

\begin{lemma}\label{modcatstr}
The abelian category ${\rm Coh}_{\rm f}(\mathcal{G})$ has a natural structure of a left module category over ${\rm sCoh}_{\rm f}(\mathcal{G})$, given by
$$
{\rm sCoh}_{\rm f}(\mathcal{G})\boxtimes {\rm Coh}_{\rm f}(\mathcal{G})\to {\rm Coh}_{\rm f}(\mathcal{G}),\,\,\,X\boxtimes Y\mapsto 
{\rm m}_*(X\boxtimes Y).
$$
Moreover, the restriction functor
${\rm sCoh}_{\rm f}(\mathcal{G})\twoheadrightarrow {\rm Coh}_{\rm f}(\mathcal{G})$, 
induced by the algebra inclusion $\mathcal{O}(\mathcal{G})\subset \overline{\mathcal{O}(\mathcal{G})}$, 
has a canonical structure of a surjective ${\rm sCoh}_{\rm f}(\mathcal{G})$-module functor.
\end{lemma}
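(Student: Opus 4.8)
The plan is to transport the two assertions across the tensor equivalence $\text{sCoh}_{\rm f}(\mathcal{G})\cong\Rep(\overline{\mathcal{O}(\mathcal{G})})$, so that everything becomes a statement about the ordinary Hopf algebra $\overline{\mathcal{O}(\mathcal{G})}$ and its Hopf subalgebra $\mathcal{O}(\mathcal{G})$, and then invoke the general fact that the category of modules over a Hopf subalgebra is a module category over the category of modules over the ambient Hopf algebra, with the action given by tensoring (over $k$) and restricting. Concretely, the inclusion $\iota:\mathcal{O}(\mathcal{G})\hookrightarrow\overline{\mathcal{O}(\mathcal{G})}$ is a Hopf algebra map (this uses that $u$ is grouplike and acts by parity, so \eqref{Radford biproduct ordinary Hopf algebra} restricts on $\mathcal{O}(\mathcal{G})\otimes 1$ to the original coproduct), hence restriction $\iota^*:\Rep(\overline{\mathcal{O}(\mathcal{G})})\to\Rep(\mathcal{O}(\mathcal{G}))$ is a (strict) tensor functor; in sheaf language this is exactly the forgetful functor $\text{sCoh}_{\rm f}(\mathcal{G})\to\text{Coh}_{\rm f}(\mathcal{G})$ that forgets the $\mathbb{Z}/2$-grading (equivalently, the $u$-action). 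First I would record that $\iota^*$ is exact, $k$-linear, and surjective on objects up to subquotients (indeed every finite $\mathcal{O}(\mathcal{G})$-module is the restriction of an $\overline{\mathcal{O}(\mathcal{G})}$-module, e.g. placed in even degree), which gives the word ``surjective'' in the statement.

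Next I would define the claimed action. For $X\in\text{sCoh}_{\rm f}(\mathcal{G})=\Rep(\overline{\mathcal{O}(\mathcal{G})})$ and $Y\in\text{Coh}_{\rm f}(\mathcal{G})=\Rep(\mathcal{O}(\mathcal{G}))$, set $X\otimes^{\mathcal{M}}Y:=\iota^*(X)\otimes Y$, where the right-hand tensor product is the convolution product $\mathrm{m}_*(\iota^*(X)\boxtimes Y)$ of $\mathcal{O}(\mathcal{G})$-modules, i.e. the $k$-tensor product with the standard $\mathcal{O}(\mathcal{G})$-module structure coming from $\Delta$. The associativity and unit constraints are inherited from those of $\text{sCoh}_{\rm f}(\mathcal{G})$ via the tensor structure on $\iota^*$: since $\iota^*$ is strict monoidal, the pentagon and triangle axioms for the module category reduce to the pentagon and triangle already known in $\text{sCoh}_{\rm f}(\mathcal{G})$. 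Bilinearity on morphisms and biexactness of $\otimes^{\mathcal{M}}$ follow because $\iota^*$ is exact and $k$-linear and the convolution product $\mathrm{m}_*$ is biexact (the latter is the same computation that makes $\text{sCoh}_{\rm f}(\mathcal{G})$ a tensor category; concretely $\mathrm{m}_*$ is tensoring over $k$ followed by restricting a coaction, both exact operations). This establishes the first assertion.

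For the second assertion, the module-functor structure on the forgetful functor $F:=\iota^*$ is the tautological one: $F(X)\otimes^{\mathcal{M}}F(Y)=\iota^*(X)\otimes\iota^*(Y)$ is canonically identified with $\iota^*(X\otimes Y)=F(X\otimes Y)$ by the (strict) monoidal structure of $\iota^*$, and one checks that this family of isomorphisms is compatible with the module-associativity constraints — again this is immediate from strict monoidality of $\iota^*$, so there is nothing to verify beyond naturality. Surjectivity of $F$ as a module functor means $F$ is essentially surjective up to direct summands (or subquotients), which we already noted. Putting these together proves the lemma.

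The routine content here is genuinely routine; the only place requiring a moment's care — and what I would flag as the ``main point'' rather than an obstacle — is making sure the convolution/restriction description of the action literally matches the $k$-linear tensor-product description, i.e. that $\mathrm{m}_*(\iota^*(X)\boxtimes Y)$ as computed for $\mathcal{O}(\mathcal{G})$-modules agrees with $\iota^*$ applied to $\mathrm{m}_*(X\boxtimes Y)$ computed for $\overline{\mathcal{O}(\mathcal{G})}$-modules. This is exactly the statement that $\iota^*$ intertwines the two convolution products, which holds because $\iota$ is a Hopf map, so there is no real difficulty; one simply has to write it out once. Everything else is a formal transport of structure along the strict tensor functor $\iota^*$.
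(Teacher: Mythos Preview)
Your overall strategy --- reduce to a statement about the ordinary Hopf algebra $\overline{\mathcal{O}(\mathcal{G})}$ and the subalgebra $\mathcal{O}(\mathcal{G})$ --- is exactly the paper's, but the key algebraic claim you rely on is false. You assert that $\iota:\mathcal{O}(\mathcal{G})\hookrightarrow\overline{\mathcal{O}(\mathcal{G})}$ is a Hopf algebra map, i.e.\ that \eqref{Radford biproduct ordinary Hopf algebra} restricted to $\mathcal{O}(\mathcal{G})\otimes 1$ recovers the original coproduct. But the formula gives
\[
\overline{\Delta}(x\otimes 1)=\sum (x_1\otimes u^{|x_2|})\otimes (x_2\otimes 1),
\]
and the first tensorand $x_1\otimes u^{|x_2|}$ lies in $\mathcal{O}(\mathcal{G})\otimes 1$ only when $|x_2|=0$. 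As soon as $\mathcal{G}$ has a nontrivial odd part this fails (e.g.\ for $\mathcal{G}$ a one-dimensional odd space, $\overline{\Delta}(\theta)=\theta\otimes 1 + u\otimes\theta$). So $\mathcal{O}(\mathcal{G})$ is \emph{not} a Hopf subalgebra of $\overline{\mathcal{O}(\mathcal{G})}$, restriction $\iota^*$ is \emph{not} a tensor functor, and ${\rm Coh}_{\rm f}(\mathcal{G})$ is not a tensor category through which you can transport structure in the way you describe.

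What is true --- and this is precisely the paper's observation --- is that the \emph{second} tensorand $x_2\otimes 1$ always lies in $\mathcal{O}(\mathcal{G})\otimes 1$, so $\mathcal{O}(\mathcal{G})$ is a \emph{left coideal subalgebra} of $\overline{\mathcal{O}(\mathcal{G})}$. For a left coideal subalgebra $C\subset H$ one has the standard module-category structure on $\Rep(C)$ over $\Rep(H)$ (act by $c$ on $X\otimes Y$ via $\sum c_1\cdot x\otimes c_2\cdot y$ with $c_1\in H$, $c_2\in C$), and restriction is a module functor for the same reason. That is the argument you should give; your version collapses because the associativity and module-functor constraints cannot be obtained ``for free'' from a nonexistent tensor structure on $\iota^*$.
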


\begin{proof}
Since by (\ref{Radford biproduct ordinary Hopf algebra}), $\mathcal{O}(\mathcal{G})\subset \overline{\mathcal{O}(\mathcal{G})}$ is a left coideal subalgebra, the claim follows from (\ref{we have an equivalence of tensor categories}).
\end{proof}

For every $g\in \mathcal{G}_0(k)$, let $\text{sCoh}_{\rm f}(\mathcal{G})_{g}:=\sRep(\wedge \g_1^*)\boxtimes\text{Coh}_{\rm f}(\mathcal{G}_0)_{g}$. By (\ref{btensdecomp}), we have an {\em abelian} equivalence
\begin{equation}\label{scohf}
\text{sCoh}_{\rm f}(\mathcal{G})\cong \bigoplus_{g\in \mathcal{G}_0(k)}\text{sCoh}_{\rm f}(\mathcal{G})_{g}.
\end{equation}

We will need the following result.

\begin{lemma}\label{tensubcat}
Every tensor subcategory of ${\rm sCoh}_{\rm f}(\mathcal{G})$ is either of the form ${\rm sCoh}_{\rm f}(\mathcal{H})$ or ${\rm Coh}_{\rm f}(\mathcal{H})$ for some closed supergroup subscheme $\mathcal{H}\subset\mathcal{G}$ or closed subgroup scheme $\mathcal{H}\subset\mathcal{G}_0$, respectively. 
\end{lemma}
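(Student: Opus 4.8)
The plan is to reduce the statement to the well-known classification of Hopf subalgebras, equivalently closed subschemes, in the ordinary and super settings. A tensor subcategory $\mathcal{D}\subset \text{sCoh}_{\rm f}(\mathcal{G})\cong \Rep(\overline{\mathcal{O}(\mathcal{G})})$ is, by Tannakian reconstruction for the ordinary Hopf algebra $\overline{\mathcal{O}(\mathcal{G})}$, of the form $\Rep(\overline{\mathcal{O}(\mathcal{G})}/J)$ for a unique Hopf ideal $J$; equivalently $\mathcal{D}=\Rep(B)$ for a quotient Hopf algebra $B$ of $\overline{\mathcal{O}(\mathcal{G})}$. Here I use that $\overline{\mathcal{O}(\mathcal{G})}$ is an honest (non-super) commutative-up-to-the-grouplike Hopf algebra, so its tensor subcategories of $\Rep$ correspond to quotient Hopf algebras, exactly as in \cite[Section 3.1]{G} in the purely even case. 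So the real content is: describe all quotient Hopf algebras $B$ of $\overline{\mathcal{O}(\mathcal{G})}=\mathcal{O}(\mathcal{G})\times k\langle u\rangle$.

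First I would record that the grouplike $u$ generates the unique copy of $k\mathbb{Z}/2$ inside $\overline{\mathcal{O}(\mathcal{G})}$ coming from the Radford biproduct, and that $\mathcal{O}(\mathcal{G})=\overline{\mathcal{O}(\mathcal{G})}^{\,u}$ in the appropriate sense, i.e. $\overline{\mathcal{O}(\mathcal{G})}\cong \mathcal{O}(\mathcal{G})\rtimes k\langle u\rangle$ as the bosonization; in particular the projection $\overline{\mathcal{O}(\mathcal{G})}\to k\langle u\rangle$ and the inclusion $k\langle u\rangle\hookrightarrow \overline{\mathcal{O}(\mathcal{G})}$ are Hopf maps exhibiting $k\langle u\rangle$ as a Hopf algebra retract. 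Then for a quotient Hopf algebra $q\colon\overline{\mathcal{O}(\mathcal{G})}\twoheadrightarrow B$ there are two cases according to whether $q(u)=1$ or $q(u)\ne 1$. If $q(u)\ne 1$, then $q(u)$ is a grouplike of order $2$ in $B$, the composite $k\langle u\rangle\hookrightarrow \overline{\mathcal{O}(\mathcal{G})}\twoheadrightarrow B$ is injective, and one checks $B$ is again a Radford biproduct $B\cong \mathcal{O}(\mathcal{H})\rtimes k\langle u\rangle$ where $\mathcal{O}(\mathcal{H}):=q(\mathcal{O}(\mathcal{G}))$ is a quotient Hopf superalgebra of $\mathcal{O}(\mathcal{G})$, hence $\mathcal{H}\subset \mathcal{G}$ is a closed supergroup subscheme; this gives the $\text{sCoh}_{\rm f}(\mathcal{H})$ alternative via \eqref{we have an equivalence of tensor categories} again. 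If $q(u)=1$, then $q$ factors through $\overline{\mathcal{O}(\mathcal{G})}/\langle u-1\rangle$; I would identify this quotient with $\mathcal{O}(\mathcal{G}_0)$ — because killing $u-1$ forces the odd part of $\mathcal{O}(\mathcal{G})$ into the kernel by \eqref{Radford biproduct ordinary Hopf algebra} (the coproduct on odd $x$ involves $u^{|x_2|}=u$, and compatibility with a Hopf quotient where $u\mapsto1$ pushes $\mathcal{O}(\mathcal{G})_1$ to zero), so $q$ factors through $\mathcal{O}(\mathcal{G})/\langle\mathcal{O}(\mathcal{G})_1\rangle=\mathcal{O}(\mathcal{G}_0)$ — and then $B$ is a quotient Hopf algebra of $\mathcal{O}(\mathcal{G}_0)$, i.e. $B=\mathcal{O}(\mathcal{H})$ for a closed subgroup scheme $\mathcal{H}\subset\mathcal{G}_0$, giving the $\text{Coh}_{\rm f}(\mathcal{H})$ alternative.

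The remaining point is to upgrade this dictionary between quotient Hopf algebras and tensor subcategories: I would invoke the (super/ordinary) Tannakian correspondence that a full tensor subcategory $\mathcal{D}$ of $\Rep(A)$ for a commutative Hopf algebra $A$, closed under subobjects, equals $\Rep(A/J)$ where $J$ is the annihilator ideal of all of $\mathcal{D}$; one must check $J$ is a Hopf ideal, which is standard (the coproduct, counit and antipode of $A$ are compatible with the tensor structure, duals and unit of $\Rep(A)$, all of which $\mathcal{D}$ respects), and that $\mathcal{D}\supseteq$ and hence $=\Rep(A/J)$ because every finite-dimensional $A/J$-module lies in $\mathcal{D}$ (being built from subquotients of tensor powers of a faithful module, an argument already used for $\text{Coh}_{\rm f}$ in \cite{G}). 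Combining with the case analysis of the previous paragraph closes the proof.

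\textbf{Main obstacle.} The step I expect to require the most care is the case $q(u)=1$: showing cleanly that a Hopf quotient of $\overline{\mathcal{O}(\mathcal{G})}$ sending $u\mapsto 1$ necessarily annihilates the odd part $\mathcal{O}(\mathcal{G})_1$ and thus factors through $\mathcal{O}(\mathcal{G}_0)$, rather than through some intermediate object, and doing this without circularity relative to the structure theory of $\mathcal{O}(\mathcal{G})$ (the decomposition \eqref{tensdecomp}). A secondary subtlety is verifying that in the case $q(u)\ne1$ the quotient $B$ really is again of Radford-biproduct form with the \emph{same} $u$ — i.e. that the grouplike structure is rigid — so that $\Rep(B)\cong\sRep(\mathcal{O}(\mathcal{H}))=\text{sCoh}_{\rm f}(\mathcal{H})$ on the nose; this uses that $u$ is the unique grouplike of $\overline{\mathcal{O}(\mathcal{G})}$ acting by parity and that a Hopf surjection can only collapse or preserve it.
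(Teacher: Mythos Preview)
Your approach is essentially the paper's: reduce to quotient Hopf algebras of $\overline{\mathcal{O}(\mathcal{G})}$ and split on whether $u\mapsto 1$ or not. The paper's proof is two sentences and leaves exactly the details you supply.

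One correction to your ``main obstacle'': the coproduct formula \eqref{Radford biproduct ordinary Hopf algebra} is not what forces $\mathcal{O}(\mathcal{G})_1$ to die when $q(u)=1$; applying $q\otimes q$ to $\overline{\Delta}(x)$ only tells you $q|_{\mathcal{O}(\mathcal{G})}$ is a coalgebra map, which you already knew. The relevant relation is the \emph{algebra} one built into the Radford biproduct: $u\,x=(-1)^{|x|}x\,u$ for homogeneous $x\in\mathcal{O}(\mathcal{G})$ (this is precisely ``$u$ acts by parity''). If $q(u)=1$ and $x$ is odd, then $q(x)=q(u x u^{-1})=-q(x)$, hence $q(x)=0$ since $\mathrm{char}\,k\neq 2$. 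This handles your obstacle in one line and shows the quotient factors through $\mathcal{O}(\mathcal{G})/\langle\mathcal{O}(\mathcal{G})_1\rangle=\mathcal{O}(\mathcal{G}_0)$ with no appeal to \eqref{tensdecomp}. The same relation disposes of your secondary subtlety: if $q(u)\neq 1$ then $q(u)$ is a grouplike of order $2$ in $B$ and conjugation by it is the parity automorphism on $q(\mathcal{O}(\mathcal{G}))$, so $B$ is again the bosonization of the Hopf superalgebra $q(\mathcal{O}(\mathcal{G}))=\mathcal{O}(\mathcal{H})$ by $\langle q(u)\rangle$.
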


\begin{proof}
It is known that every tensor subcategory of $\text{Rep}(\overline{\mathcal{O}(\mathcal{G})})$ corresponds to a Hopf quotient of $\overline{\mathcal{O}(\mathcal{G})}$. Now if $u$ is mapped to $1$ in the quotient, then we get the second case (as all odd elements must act by zero). Otherwise, we get the first case.
\end{proof}

\begin{remark}\label{scohfgomega}
The class of tensor categories ${\rm sCoh}_{\rm f}(\mathcal{G})$ can be extended to a larger class of tensor categories ${\rm sCoh}_{\rm f}(\mathcal{G},\Omega)$ in exactly the same way as in the even case \cite[Section 5]{G}. Namely, let $\mathcal{G}$ be an affine supergroup scheme over $k$, and let $\Omega\in Z^3(\mathcal{G},\mathbb{G}_m)$ be a normalized even $3$-cocycle. Equivalently, $\Omega$ is a 
\emph{Drinfeld associator} for $\mathcal{O}(\mathcal{G})$, i.e., $\Omega\in \mathcal{O}(\mathcal{G})^{\ot 3}$ is an 
invertible even element satisfying the equations
$$(\id\ot \id\ot \Delta)(\Omega)(\Delta\ot \id\ot \id)(\Omega)=(1\ot \Omega)(\id\ot \Delta\ot \id)(\Omega)(\Omega\ot 1)$$
and
$$(\varepsilon\ot \id\ot \id)(\Omega)=(\id\ot \varepsilon\ot \id)(\Omega)=(\id\ot \id\ot \varepsilon)(\Omega)=1.$$
Then ${\rm sCoh}_{\rm f}(\mathcal{G},\Omega)$ is the abelian category ${\rm sCoh}_{\rm f}(\mathcal{G})$ equipped with the tensor product given by convolution and associativity constraint given by the action of $\Omega$ (viewed as an invertible element in $\mathcal{O}(\mathcal{G})^{\ot 3}$).
\end{remark}

\section{Equivariant quasi-coherent sheaves}\label{equiqcohsh}

Let $\mathcal{G}$ be an affine supergroup scheme\footnote{The purely even case is treated in \cite[Section 3.2]{G}.} over $k$, let $\mathcal{H}\subset \mathcal{G}$ be a closed supergroup subscheme (see \ref{gsch}), and let
$\iota=\iota_{\mathcal{H}}:\mathcal{H}\hookrightarrow \mathcal{G}$ be the inclusion morphism. Let $\mu
:\mathcal{G}\times \mathcal{H}\to \mathcal{G}$ be the free action of $\mathcal{H}$ on $\mathcal{G}$ by right translations (in other words, the free actions of $\mathcal{H}(R)$ on $\mathcal{G}(R)$ by right translations that are functorial in $R$, $R$ a supercommutative $k$-superalgebra). Set
$$\eta:=\mu(\id\times {\rm m})=\mu(\mu\times \id):
\mathcal{G}\times \mathcal{H}\times \mathcal{H}\to \mathcal{G},$$ and let
$$\text{p}_1:\mathcal{G}\times \mathcal{H}\to \mathcal{G},\,\text{p}_1:\mathcal{G}\times \mathcal{H}\times \mathcal{H}\to \mathcal{G},\,
\text{p}_{12}:\mathcal{G}\times \mathcal{H}\times \mathcal{H}\to \mathcal{G}\times \mathcal{H}$$ be the obvious projections. We clearly have $\text{p}_1\circ
\text{p}_{12}=\text{p}_1$ as morphisms $\mathcal{G}\times \mathcal{H}\times \mathcal{H}\to \mathcal{G}$.

Now let $\Psi: \mathcal{H}\times \mathcal{H}\to \mathbb{G}_m$ be a normalized even $2$-cocycle, i.e., 
$\Psi\in \mathcal{O}(\mathcal{H})^{\ot 2}$ is a twist for $\mathcal{O}(\mathcal{H})$ (see \ref{gsch}), and let
$\mathcal{O}(\mathcal{H})_{\Psi}$ be the (``twisted") supercoalgebra with underlying supervector
space $\mathcal{O}(\mathcal{H})$ and comultiplication $\Delta_{\Psi}$ given by
$\Delta_{\Psi}(f):=\Delta(f)\Psi$, where $\Delta$ is the standard comultiplication of $\mathcal{O}(\mathcal{H})$. Note that $\Psi$ defines an automorphism of any quasi-coherent sheaf on $\mathcal{H}\times \mathcal{H}$ by multiplication.

\begin{definition}\label{defequiv}
Let $\Psi: \mathcal{H}\times \mathcal{H}\to \mathbb{G}_m$ be a normalized even $2$-cocycle on a closed supergroup subscheme  $\mathcal{H}\subset \mathcal{G}$.
\begin{enumerate}
\item 
An \emph{$(\mathcal{H},\Psi)$-equivariant} quasi-coherent sheaf  on $\mathcal{G}$ is a pair
$(S,\lambda)$, where $S\in \text{sQCoh}(\mathcal{G})$ and $\lambda:{\rm p}_1^*(S)\xrightarrow{\cong} \mu^*(S)$ is an
isomorphism of sheaves on $\mathcal{G}\times \mathcal{H}$, such that the diagram of morphisms of sheaves on
$\mathcal{G}\times \mathcal{H}\times \mathcal{H}$
\begin{equation*}
\label{equivariantX} \xymatrix{{\rm p}_1^*(S) \ar[d]_{(\id\times
{\rm m})^*(\lambda)}\ar[rr]^{{\rm p}_{12}^*(\lambda)}
&& (\mu\circ {\rm p}_{12})^*(S)\ar[d]^{(\mu\times \id)^*(\lambda)} &&\\
\eta^*(S)\ar[rr]_{\id\boxtimes \Psi} && \eta^*(S)&&}
\end{equation*}
is commutative.
\item
Let $(S,\lambda_S)$ and $(T,\lambda_T)$ be two
$(\mathcal{H},\Psi)$-equivariant quasi-coherent sheaves on $\mathcal{G}$. A morphism $\phi:S\to T$ in
$\text{sQCoh}(\mathcal{G})$ is said to be \emph{$(\mathcal{H},\Psi)$-equivariant} if the
diagram of morphisms of sheaves on $\mathcal{G}\times \mathcal{H}$
\begin{equation*}
\label{equivariantX1} \xymatrix{{\rm p}_1^*(S)
\ar[d]_{\lambda_S}\ar[rr]^{{\rm p}_1^*(\Psi)}
&& {\rm p}_1^*(T)\ar[d]^{\lambda_T} &&\\
\mu^*(S)\ar[rr]_{\mu^*(\phi)} && \mu^*(T)&&}
\end{equation*}
is commutative.
\item
Let $\text{sCoh}_{\rm f}^{(\mathcal{H},\Psi)}(\mathcal{G})$ be the $k$-abelian category of $(\mathcal{H},\Psi)$-equivariant {\em coherent} sheaves on $\mathcal{G}$ with \emph{finite support in $\mathcal{G}_0/\mathcal{H}_0$} (i.e., sheaves supported on {\em finitely many} $\mathcal{H}_0$-cosets), with $(\mathcal{H},\Psi)$-equivariant morphisms.
\end{enumerate}
\end{definition}

Replacing $\text{sQCoh}(\mathcal{G})$ with $\text{QCoh}(\mathcal{G})$ everywhere in Definition \ref{defequiv}, we define the notion of an {\em $(\mathcal{H},\Psi)$-equivariant} $\mathcal{O}(\mathcal{G})$-module, and the $k$-abelian category $\text{Coh}_{\rm f}^{(\mathcal{H},\Psi)}(\mathcal{G})$ of {\em finitely generated} $(\mathcal{H},\Psi)$-equivariant $\mathcal{O}(\mathcal{G})$-modules with finite support in $\mathcal{G}_0/\mathcal{H}_0$.

\begin{example}\label{ex1}
We have 
$$\text{sCoh}_{\rm f}^{(\{1\},1)}(\mathcal{G})=\text{sCoh}_{\rm f}(\mathcal{G})\,\,\,{\rm and}\,\,\,\text{Coh}_{\rm f}^{(\{1\},1)}(\mathcal{G})=\text{Coh}_{\rm f}(\mathcal{G}).$$
\end{example}

\begin{remark}\label{biequiv}
Let $(\mathcal{H}',\Psi')$ be another pair consisting of a closed supergroup subscheme  $\mathcal{H}'\subset\mathcal{G}$ and an even normalized $2$-cocycle $\Psi'$ on $\mathcal{H}'$. By considering the free right action of $\mathcal{H}'\times \mathcal{H}$ on $\mathcal{G}$ given by $g(a,b):=a^{-1}gb$, we can similarly
define {\em $((\mathcal{H}',\Psi'),(\mathcal{H},\Psi))$-biequivariant} quasi-coherent sheaves on $\mathcal{G}$, $((\mathcal{H}',\Psi'),(\mathcal{H},\Psi))$-equivariant $\mathcal{O}(\mathcal{G})$-modules, and the $k$-abelian categories $\text{sCoh}_{\rm f}^{((\mathcal{H}',\Psi'),(\mathcal{H},\Psi))}(\mathcal{G})$, $\text{Coh}_{\rm f}^{((\mathcal{H}',\Psi'),(\mathcal{H},\Psi))}(\mathcal{G})$.
\end{remark}

\begin{remark}\label{scohfgomega1}
Retain the notation from Remark \ref{scohfgomega}. 
Let $\mathcal{H}\subset \mathcal{G}$ be a closed supergroup subscheme, and let $\Psi\in C^2(\mathcal{H},\mathbb{G}_m)$ be a normalized even $2$-cochain such that $d\Psi=\Omega_{|\mathcal{H}}$. Then similarly to ${\rm sCoh}_{\rm f}^{(\mathcal{H},\Psi)}(\mathcal{G})$ (the case $\Omega=1$), with the obvious adjustments, we can define the category ${\rm sCoh}_{\rm f}^{(\mathcal{H},\Psi)}(\mathcal{G},\Omega)$ of $(\mathcal{H},\Psi)$-equivariant coherent sheaves on $(\mathcal{G},\Omega)$ with \emph{finite} support in $\mathcal{G}_0/\mathcal{H}_0$, and the category ${\rm Coh}_{\rm f}^{(\mathcal{H},\Psi)}(\mathcal{G},\Omega)$ of {\em finitely generated} $(\mathcal{H},\Psi)$-equivariant $\mathcal{O}(\mathcal{G},\Omega)$-modules, where $\mathcal{O}(\mathcal{G},\Omega)$ is the obviously defined quasi-Hopf algebra.
\end{remark}

Consider now the supercoalgebra $\mathcal{O}(\mathcal{H})_{\Psi}$ in $\text{sCoh}(\mathcal{G})$, and let $\widehat{\mathcal{O}(\mathcal{H})_{\Psi}}$ be its profinite completion with respect to the superalgebra structure of $\mathcal{O}(\mathcal{H})$ (see \cite[Example 2.4]{G}). Then $\widehat{\mathcal{O}(\mathcal{H})_{\Psi}}$ is a supercoalgebra object in both $\Pro({\rm sCoh}(\mathcal{G}))$ and $\Pro({\rm sCoh}_{\rm f}(\mathcal{G}))$.

\begin{lemma}\label{eqcscom}
We have abelian equivalences $${\rm sCoh}_{\rm f}^{(\mathcal{H},\Psi)}(\mathcal{G})\cong{\rm Comod}_{\Pro({\rm sCoh}_{\rm f}(\mathcal{G}))}(\widehat{\mathcal{O}(\mathcal{H})_{\Psi}})$$ and 
$${\rm Coh}_{\rm f}^{(\mathcal{H},\Psi)}(\mathcal{G})\cong{\rm Comod}_{\Pro({\rm Coh}_{\rm f}(\mathcal{G}))}(\widehat{\mathcal{O}(\mathcal{H})_{\Psi}}).$$
\end{lemma}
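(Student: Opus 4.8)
The plan is to translate the geometric notion of an $(\mathcal{H},\Psi)$-equivariant sheaf on $\mathcal{G}$ into the Hopf-algebraic language of comodules over the twisted coalgebra $\mathcal{O}(\mathcal{H})_\Psi$, and then recognize that this is exactly a comodule over the pro-object $\widehat{\mathcal{O}(\mathcal{H})_\Psi}$ in the appropriate pro-completion. I would prove the two equivalences in parallel, since the second is obtained from the first by forgetting the $u$-action (equivalently, by replacing $\overline{\mathcal{O}(\mathcal{G})}$-modules with $\mathcal{O}(\mathcal{G})$-modules throughout); the argument is formally identical. So I focus on the first equivalence.

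First I would unwind the definitions on the module side. An object of $\mathrm{sQCoh}(\mathcal{G})$ is an $\mathcal{O}(\mathcal{G})$-supermodule $S$; pulling back along $\mu$ and $\mathrm{p}_1$ and using the tensor decomposition $\mathcal{O}(\mathcal{G}\times\mathcal{H})\cong\mathcal{O}(\mathcal{G})\otimes\mathcal{O}(\mathcal{H})$, the data of $\lambda:\mathrm{p}_1^*(S)\xrightarrow{\cong}\mu^*(S)$ becomes precisely a map $S\to S\otimes\mathcal{O}(\mathcal{H})$, and the pentagon/associativity diagram of Definition \ref{defequiv}(1), with the $\Psi$-twist inserted along $\eta$, becomes exactly the coassociativity axiom for a coaction $S\to S\otimes\mathcal{O}(\mathcal{H})_\Psi$ — the cocycle $\Psi$ enters the composite $(\mathrm{id}\otimes\Delta)$ versus $(\Delta\otimes\mathrm{id})$ comparison in the same place it appears in $\Delta_\Psi(f)=\Delta(f)\Psi$. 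The counit/normalization is the $(\mathrm{id}\times{\rm e})$ triangle. Compatibility of $\lambda$ with the $\mathcal{O}(\mathcal{G})$-module structure (which is built into $\lambda$ being a morphism of sheaves on $\mathcal{G}\times\mathcal{H}$) says that the coaction is a map of $\mathcal{O}(\mathcal{G})$-supermodules, i.e., $\mathcal{O}(\mathcal{H})_\Psi$ is being regarded as a coalgebra object in $\mathrm{sCoh}(\mathcal{G})$ via the inclusion $\iota:\mathcal{O}(\mathcal{G})\twoheadrightarrow\mathcal{O}(\mathcal{H})$ making $\mathcal{O}(\mathcal{H})$ an $\mathcal{O}(\mathcal{G})$-module — which is the statement that $\mathcal{O}(\mathcal{H})_\Psi$ lives in $\mathrm{sCoh}(\mathcal{G})$ as asserted just before the lemma. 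Likewise Definition \ref{defequiv}(2) unwinds to the definition of a morphism of $\mathcal{O}(\mathcal{H})_\Psi$-comodules (twisted by $\mathrm{p}_1^*(\Psi)$ on the source, which matches the twisted coaction). So the only remaining points are (a) the passage from $\mathrm{sQCoh}$ to coherent sheaves and from comodules to $\widehat{\mathcal{O}(\mathcal{H})_\Psi}$-comodules, and (b) the support condition.

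For (a): a coaction $S\to S\otimes\mathcal{O}(\mathcal{H})_\Psi$ on a \emph{finite dimensional} (over each point, i.e., coherent) sheaf $S$ lands, by finiteness of $S$, in $S\otimes C$ for a finite dimensional subcoalgebra-compatible piece; equivalently, since $\mathcal{O}(\mathcal{H})$ is the union of its finite dimensional sub-$\mathcal{O}(\mathcal{G})$-module-coalgebras and $\widehat{\mathcal{O}(\mathcal{H})_\Psi}$ is by definition the pro-object $\lim(\mathcal{O}(\mathcal{H})/\mathfrak{a}^n)_\Psi$ over cofinite powers of the augmentation-type ideals, a comodule structure over the coalgebra $\mathcal{O}(\mathcal{H})_\Psi$ in $\mathrm{Ind}(\mathrm{sCoh}_{\rm f}(\mathcal{G}))$ on a \emph{coherent} $S$ is the same as a comodule structure over the pro-coalgebra $\widehat{\mathcal{O}(\mathcal{H})_\Psi}$ in $\mathrm{Pro}(\mathrm{sCoh}_{\rm f}(\mathcal{G}))$. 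This is exactly the pro/ind bookkeeping that is set up in \cite[Example 2.4]{G} and recalled before the lemma, so I would cite it rather than reprove it. For (b): the coherent sheaf $S$ is supported on a finite subset $Z\subset\mathcal{G}_0(k)$; the equivariance isomorphism $\lambda$ forces $Z$ to be stable under right translation by $\mathcal{H}_0(k)$, so $Z$ is a finite union of $\mathcal{H}_0$-cosets, which is precisely the condition ``finite support in $\mathcal{G}_0/\mathcal{H}_0$'' defining $\mathrm{sCoh}_{\rm f}^{(\mathcal{H},\Psi)}(\mathcal{G})$; conversely any such $S$ is coherent with $\mathcal{H}_0$-stable finite support. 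This matches the finiteness built into $\mathrm{Comod}_{\mathrm{Pro}(\mathrm{sCoh}_{\rm f}(\mathcal{G}))}(\widehat{\mathcal{O}(\mathcal{H})_\Psi})$ since objects of $\mathrm{sCoh}_{\rm f}(\mathcal{G})$ are automatically supported on finite sets (Remark \ref{think}).

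The main obstacle I expect is purely notational rather than mathematical: carefully checking that the $\Psi$-twist inserted along $\eta$ in the commutative diagram of Definition \ref{defequiv}(1) corresponds, under the identification $\lambda\leftrightarrow(\text{coaction }\rho)$, to the \emph{same} placement of $\Psi$ as in $\Delta_\Psi=\Delta(\cdot)\Psi$, i.e., that one gets coassociativity for $\mathcal{O}(\mathcal{H})_\Psi$ and not for some variant twisted coalgebra. Concretely this is the identity $(\mathrm{id}\otimes\Delta_\Psi)\rho=(\rho\otimes\mathrm{id})\rho$ expanded out: $(\mathrm{id}\otimes\Delta)\circ\rho^{(2)}\cdot(1\otimes\Psi)$ versus $(\Delta\otimes\mathrm{id})\circ\rho^{(2)}\cdot(\Psi\otimes 1)$, which is exactly the two routes around the square once one writes $(\mathrm{id}\times{\rm m})^*(\lambda)$ and $(\mu\times\mathrm{id})^*(\lambda)$ in coordinates — and the $\mathrm{id}\boxtimes\Psi$ bottom arrow supplies the $\Psi$ discrepancy. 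I would verify this once, carefully, in the comodule dictionary and then declare the rest routine; the second equivalence follows verbatim by dropping the $u$-action, and functoriality of the correspondence on morphisms (Definition \ref{defequiv}(2) versus twisted-comodule morphisms) is immediate from the same unwinding.
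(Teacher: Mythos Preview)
Your proposal is correct and follows essentially the same strategy as the paper: translate the equivariance isomorphism $\lambda$ into a coaction $\rho:S\to S\otimes\widehat{\mathcal{O}(\mathcal{H})_\Psi}$ and check that the $\Psi$-twisted square of Definition~\ref{defequiv}(1) becomes coassociativity for $\Delta_\Psi$. The only real difference is packaging: the paper obtains the pro-object in one stroke via the adjunction $\Hom_{\mathcal{G}\times\mathcal{H}}(\mu^*S,{\rm p}_1^*S)\cong\Hom_{\mathcal{G}}(S,\mu_*{\rm p}_1^*S)$ together with the identification $\mu_*{\rm p}_1^*S\cong S\otimes\widehat{\mathcal{O}(\mathcal{H})}$ (so it is the inverse $\lambda^{-1}:\mu^*S\to{\rm p}_1^*S$ that adjoints to the coaction), whereas you first unwind to a map $S\to S\otimes\mathcal{O}(\mathcal{H})$ and then argue separately in your step~(a) that on coherent $S$ this is the same as a $\widehat{\mathcal{O}(\mathcal{H})_\Psi}$-comodule structure in $\Pro({\rm sCoh}_{\rm f}(\mathcal{G}))$.
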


\begin{proof}
We prove the first equivalence, the proof of the second one being similar.

For every $S\in \Pro({\rm sCoh}_{\rm f}(\mathcal{G}))$, we have a natural isomorphism $$\Hom_{\mathcal{G}\times
\mathcal{H}}(\mu^*(S),{\rm p}_1^*(S))\cong \Hom_{\mathcal{G}}(S,\mu_*{\rm p}_1^*(S))$$ (``adjunction"). Since $\mu_*{\rm p}_1^*(S)\cong S\ot
\widehat{\mathcal{O}(\mathcal{H})}$, we can assign
to any isomorphism $\lambda:\mu^*(S)\to {\rm p}_1^*(S)$ a morphism
$\rho_{\lambda}:S\to S\ot \widehat{\mathcal{O}(\mathcal{H})}$. It is now straightforward
to verify that $\rho_{\lambda}:S\to S\ot
\widehat{\mathcal{O}(\mathcal{H})_{\Psi}}$ is a comodule map 
 if and only if $(S,\lambda^{-1})$ is an $(\mathcal{H},\Psi)$-equivariant coherent sheaf on $\mathcal{G}$ with finite support in $\mathcal{G}_0/\mathcal{H}_0$.
\end{proof}

The next proposition will be very useful in the sequel.

\begin{proposition}\label{simpleex}
Let $\mathcal{H}\subset \mathcal{G}$ be a closed supergroup subscheme, and let $\Psi$ be an even normalized $2$-cocycle on $\mathcal{H}$.
Then the following hold:
\begin{enumerate}
\item
The structure sheaf $\mathcal{O}(\mathcal{H})$ of $\mathcal{H}$ admits a canonical structure of an $(\mathcal{H},\Psi)$-equivariant coherent sheaf on $\mathcal{H}$, making it a simple object
of ${\rm sCoh}_{\rm f}^{(\mathcal{H},\Psi)}(\mathcal{H})\cong{\rm sVect}$, and the regular $\mathcal{O}(\mathcal{H})$-module admits a canonical structure of an $(\mathcal{H},\Psi)$-equivariant $\mathcal{O}(\mathcal{H})$-module, making it the simple object of ${\rm Coh}_{\rm f}^{(\mathcal{H},\Psi)}(\mathcal{H})\cong{\rm Vect}$.
\item 
The sheaf $\iota_*\mathcal{O}(\mathcal{H})$\footnote{The superrepresentation of $\mathcal{O}(\mathcal{G})$ on $\mathcal{O}(\mathcal{H})$ coming from $\iota$.} is a simple object in
${\rm sCoh}_{\rm f}^{(\mathcal{H},\Psi)}(\mathcal{G})$, and the $\mathcal{O}(\mathcal{G})$-module $\iota_*\mathcal{O}(\mathcal{H})\in {\rm Coh}(\mathcal{G})$\footnote{The representation of $\mathcal{O}(\mathcal{G})$ on $\mathcal{O}(\mathcal{H})$ coming from $\iota$.} is a simple object in
${\rm Coh}_{\rm f}^{(\mathcal{H},\Psi)}(\mathcal{G})$.
\item
For every $X\in {\rm sCoh}_{\rm f}(\mathcal{G})$, we have 
$${\rm m}_*(X\boxtimes M)\in {\rm sCoh}_{\rm f}^{(\mathcal{H},\Psi)}(\mathcal{G}),\,\,M\in {\rm sCoh}_{\rm f}^{(\mathcal{H},\Psi)}(\mathcal{G}),$$
and
$${\rm m}_*(X\boxtimes M)\in {\rm Coh}_{\rm f}^{(\mathcal{H},\Psi)}(\mathcal{G}),\,\,M\in {\rm Coh}_{\rm f}^{(\mathcal{H},\Psi)}(\mathcal{G}).$$
\end{enumerate}
\end{proposition}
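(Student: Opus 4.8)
The plan is to verify the three assertions by exploiting the equivalences established in Lemma~\ref{eqcscom}, which reduce everything to statements about comodules over the profinite completed supercoalgebra $\widehat{\mathcal{O}(\mathcal{H})_{\Psi}}$ in $\Pro({\rm sCoh}_{\rm f}(\mathcal{G}))$ (resp.\ $\Pro({\rm Coh}_{\rm f}(\mathcal{G}))$). I would treat the super-sheaf statements in detail and remark that the plain module statements follow by the same argument after replacing $\overline{\mathcal{O}(\mathcal{G})}$-modules by $\mathcal{O}(\mathcal{G})$-modules throughout.

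For part (1): I would first work on $\mathcal{G}=\mathcal{H}$ itself. The free right action $\mu:\mathcal{H}\times\mathcal{H}\to\mathcal{H}$ together with the twisted comultiplication $\Delta_\Psi$ gives the structure sheaf $\mathcal{O}(\mathcal{H})$ a canonical equivariance isomorphism $\lambda:{\rm p}_1^*\mathcal{O}(\mathcal{H})\xrightarrow{\cong}\mu^*\mathcal{O}(\mathcal{H})$ coming from $\Delta$ (rescaled by $\Psi$); the cocycle diagram in Definition~\ref{defequiv}(1) then becomes exactly the twist equation $(\Delta\ot\id)(\Psi)(\Psi\ot 1)=(\id\ot\Delta)(\Psi)(1\ot\Psi)$, so it commutes. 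Under the equivalence ${\rm sCoh}_{\rm f}^{(\mathcal{H},\Psi)}(\mathcal{H})\cong{\rm Comod}_{\Pro({\rm sCoh}_{\rm f}(\mathcal{H}))}(\widehat{\mathcal{O}(\mathcal{H})_{\Psi}})$ this sheaf corresponds to the regular comodule $\widehat{\mathcal{O}(\mathcal{H})_{\Psi}}$ over itself, which (since $\widehat{\mathcal{O}(\mathcal{H})_{\Psi}}$ is a ``connected'' completed supercoalgebra) has a unique simple comodule, namely the trivial one $k$; hence ${\rm sCoh}_{\rm f}^{(\mathcal{H},\Psi)}(\mathcal{H})\cong{\rm sVect}$ with $\mathcal{O}(\mathcal{H})$ the simple object. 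The same computation with ${\rm Coh}$ in place of ${\rm sCoh}$ gives the module-category statement and $\cong{\rm Vect}$.

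For part (2): pushing forward along $\iota:\mathcal{H}\hookrightarrow\mathcal{G}$ is an exact fully faithful functor ${\rm sCoh}_{\rm f}(\mathcal{H})\hookrightarrow{\rm sCoh}_{\rm f}(\mathcal{G})$, and it is compatible with the right $\mathcal{H}$-actions on $\mathcal{H}$ and on $\mathcal{G}$ (since $\mu_{\mathcal{G}}\circ(\iota\times\id)=\iota\circ\mu_{\mathcal{H}}$), so it induces an exact, fully faithful, equivariance-preserving functor $\iota_*:{\rm sCoh}_{\rm f}^{(\mathcal{H},\Psi)}(\mathcal{H})\to{\rm sCoh}_{\rm f}^{(\mathcal{H},\Psi)}(\mathcal{G})$; equivalently, on the comodule side it is induced by the pushforward of $\widehat{\mathcal{O}(\mathcal{H})_{\Psi}}$ from $\Pro({\rm sCoh}_{\rm f}(\mathcal{H}))$ into $\Pro({\rm sCoh}_{\rm f}(\mathcal{G}))$, where $\widehat{\mathcal{O}(\mathcal{H})_{\Psi}}$ remains a supercoalgebra. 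Since $\iota_*$ is exact and fully faithful, it sends the simple object $\mathcal{O}(\mathcal{H})$ of part (1) to a simple object $\iota_*\mathcal{O}(\mathcal{H})$ of ${\rm sCoh}_{\rm f}^{(\mathcal{H},\Psi)}(\mathcal{G})$, and the same for ${\rm Coh}$.

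For part (3): the point is that $(\mathcal{H},\Psi)$-equivariant sheaves form a module category over ${\rm sCoh}_{\rm f}(\mathcal{G})$ under ${\rm m}_*$. In comodule terms: if $M$ carries a $\widehat{\mathcal{O}(\mathcal{H})_{\Psi}}$-comodule structure $\rho_M:M\to M\ot\widehat{\mathcal{O}(\mathcal{H})_{\Psi}}$ in $\Pro({\rm sCoh}_{\rm f}(\mathcal{G}))$, then for $X\in{\rm sCoh}_{\rm f}(\mathcal{G})$ the object ${\rm m}_*(X\boxtimes M)=X\ot M$ inherits a comodule structure $\id_X\ot\rho_M$, because the $\widehat{\mathcal{O}(\mathcal{H})_{\Psi}}$-coaction comes from right translation on $\mathcal{G}$ while ${\rm m}_*$ applies left translation, and these commute (associativity of ${\rm m}$ and $\mu_{\mathcal{G}}\circ({\rm m}\times\id)={\rm m}\circ(\id\times\mu_{\mathcal{G}})$ up to the obvious flip). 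Concretely one checks that the equivariance isomorphism on $X\ot M$ defined by $\id_X\boxtimes\lambda_M$ satisfies the cocycle diagram of Definition~\ref{defequiv}(1) whenever $\lambda_M$ does --- here $\Psi$ enters only through $M$, not through $X$, so the verification is a routine diagram chase using functoriality of pullback/pushforward. Finiteness of support in $\mathcal{G}_0/\mathcal{H}_0$ is preserved since ${\rm m}_*(X\boxtimes-)$ only translates the support by the (finite) support of $X$ in $\mathcal{G}_0$. The ${\rm Coh}$ statement is identical, using the module structure of Lemma~\ref{modcatstr}.

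The main obstacle I expect is part (3): carefully identifying ${\rm m}_*(X\boxtimes-)$ with $X\ot-$ at the level of $\Pro$-objects and checking that the two $\mathcal{H}$-actions (left translation from ${\rm m}$, right translation defining equivariance) genuinely commute on the nose of the equivariance data, including tracking the twist $\Psi$ and the parity signs correctly. The cleanest route is probably to do this entirely on the comodule side via Lemma~\ref{eqcscom}, where commutativity of left and right translations is transparent, rather than chasing the three-term diagram in Definition~\ref{defequiv}(1) directly.
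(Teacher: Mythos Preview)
Parts (2) and (3) of your plan are essentially correct and match the paper's approach: for (2) the paper spells out the base-change squares showing that $\iota_*$ transports the equivariance isomorphism $\lambda$ from $\mathcal{H}$ to $\mathcal{G}$, and for (3) it passes through $(\mathcal{H},\Psi)$-equivariance on $\mathcal{G}\times\mathcal{G}$ and uses that ${\rm m}$ is $\mathcal{H}$-equivariant --- which is exactly your $\id_X\ot\rho_M$ on the comodule side.

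The gap is in part (1). You correctly note that under Lemma~\ref{eqcscom} the equivariant sheaf $\mathcal{O}(\mathcal{H})$ corresponds to $\widehat{\mathcal{O}(\mathcal{H})}$ with its regular $\Delta_\Psi$-coaction. But the inference ``$\widehat{\mathcal{O}(\mathcal{H})_{\Psi}}$ is connected, so it has a unique simple comodule $k$, so the category is ${\rm sVect}$ with $\mathcal{O}(\mathcal{H})$ simple'' does not go through. The ``unique simple comodule $k$'' fact you are invoking is about comodules \emph{in ${\rm sVect}$} over a local coalgebra; here the comodules live in $\Pro({\rm sCoh}_{\rm f}(\mathcal{H}))$, so they carry an additional $\mathcal{O}(\mathcal{H})$-module structure, and the skyscraper $k$ with ``trivial'' coaction $1\mapsto 1\ot 1$ is not even a comodule unless $\Psi=1\ot 1$ (since $\Delta_\Psi(1)=\Psi$). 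Even if there were a unique simple, connectedness gives neither semisimplicity of the whole category nor that the regular comodule itself is simple.

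What is actually needed is the (twisted) structure theorem for Hopf modules: an object of ${\rm Comod}_{\Pro({\rm sCoh}_{\rm f}(\mathcal{H}))}(\widehat{\mathcal{O}(\mathcal{H})_{\Psi}})$ is precisely an $\mathcal{O}(\mathcal{H})$-supermodule $M$ with a compatible $\mathcal{O}(\mathcal{H})_{\Psi}$-coaction, and every such $M$ is free on its coinvariants, $M\cong M^{{\rm co}\,\mathcal{O}(\mathcal{H})}\ot_k\mathcal{O}(\mathcal{H})$. This is exactly what the paper proves: it writes down the action map $\alpha$ and its explicit inverse $\beta(m)=\sum S^{-1}(m_1)\cdot m_0\ot m_2$, establishing the equivalence $M\mapsto M^{{\rm co}\,\mathcal{O}(\mathcal{H})}$ with ${\rm sVect}$, under which $\delta=\mathcal{O}(\mathcal{H})$ corresponds to $k$ (hence is simple) and $\delta^-=k^{0\mid 1}\ot\delta$ to $k^{0\mid 1}$. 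Without this step --- or an explicit citation of the fundamental theorem of Hopf modules in its twisted form --- your argument for (1) is incomplete.
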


\begin{proof}
We will prove the proposition for sheaves, the proof for modules being similar.

(1) Consider the isomorphism $\varphi:=({\rm m},{\rm p}_2):\mathcal{H}\times
\mathcal{H}\xrightarrow{\cong} \mathcal{H}\times \mathcal{H}
$. Since $\rm{p}_1\circ\varphi={\rm m}$, it follows that $({\rm p}_1\circ\varphi)^*\mathcal{O}(\mathcal{H})={\rm m}^*\mathcal{O}(\mathcal{H})$. Now,
multiplication by $\Psi$ defines an isomorphism
$${\rm m}^*\mathcal{O}(\mathcal{H})=({\rm p}_1\circ\varphi)^*\mathcal{O}(\mathcal{H})=(\varphi^*\circ {\rm p}_1^*)\mathcal{O}(\mathcal{H})\xrightarrow{\Psi}
(\varphi^*\circ {\rm p}_1^*)\mathcal{O}(\mathcal{H}),$$ and since we have 
${\rm p}_1^*\mathcal{O}(\mathcal{H})=\mathcal{O}(\mathcal{H})\ot \mathcal{O}(\mathcal{H})$, we get an
isomorphism $$\lambda:{\rm p}_1^*\mathcal{O}(\mathcal{H})=
\mathcal{O}(\mathcal{H})\ot \mathcal{O}(\mathcal{H})\xrightarrow{\varphi^*}
\varphi^*(\mathcal{O}(\mathcal{H})\ot \mathcal{O}(\mathcal{H}))\xrightarrow{\Psi^{-1}}
{\rm m}^*\mathcal{O}(\mathcal{H}).$$ The fact that
$(\mathcal{O}(\mathcal{H}),\lambda)$ is an $(\mathcal{H},\Psi)$-equivariant coherent sheaf on $\mathcal{H}$ can be checked now in a straightforward manner using the tensor decomposition (\ref{tensdecomp}). Clearly, $(\mathcal{O}(\mathcal{H}),\lambda)$ is a simple object in ${\rm sCoh}_{\rm f}^{(\mathcal{H},\Psi)}(\mathcal{H})$.

Let $\delta:=(\mathcal{O}(\mathcal{H}),\lambda)$, and consider the simple object $\delta^-:=k^{0\mid 1}\ot \delta$ (via $\id\ot\Delta$). 
It is clear that $\delta\ncong\delta^-$ in ${\rm sCoh}_{\rm f}^{(\mathcal{H},\Psi)}(\mathcal{H})$. 

Now let $M$ be any object in ${\rm sCoh}_{\rm f}^{(\mathcal{H},\Psi)}(\mathcal{H})$, and let $X:=M^{{\rm co}\mathcal{O}(\mathcal{H})}$. We claim that 
$$
M\cong M^{{\rm co}\mathcal{O}(\mathcal{H})}\ot_k\delta:=\overline{X}_0\ot_k\delta\oplus \overline{X}_1\ot_k\delta^-
$$
in ${\rm sCoh}_{\rm f}^{(\mathcal{H},\Psi)}(\mathcal{H})$, where $\overline{X}$ denotes the underlying vector space of $X$.
Indeed, let $\alpha: M^{{\rm co}\mathcal{O}(\mathcal{H})}\ot_k \mathcal{O}(\mathcal{H})\to M$ be the action map, and let 
$$\beta:M\to M^{{\rm co}\mathcal{O}(\mathcal{H})}\ot_k\mathcal{O}(\mathcal{H}),\,\,m\mapsto \sum S^{-1}(m_1)\cdot m_0\ot m_2.$$
Then it is straightforward to check that $\alpha$ and $\beta$ are inverse to each other. 
Hence, ${\rm sCoh}_{\rm f}^{(\mathcal{H},\Psi)}(\mathcal{H})$ is semisimple of rank $2$, as claimed.

(2) Since $\iota$ is affine, the commutative diagrams
\begin{equation*}
\xymatrix{\mathcal{H}\times \mathcal{H} \ar[d]_{\iota \times \id}\ar[rr]^{{\rm p}_1}
&& \mathcal{H} \ar[d]^{\iota} &&\\
\mathcal{G}\times \mathcal{H}\ar[rr]_{{\rm p}_1} && \mathcal{G}&&} \xymatrix{\mathcal{H}\times \mathcal{H} \ar[d]_{\iota
\times \id}\ar[rr]^{{\rm m}}
&& \mathcal{H} \ar[d]^{\iota} &&\\
\mathcal{G}\times \mathcal{H}\ar[rr]_{\mu} && \mathcal{G}&&}
\end{equation*}
yield isomorphisms 
\begin{equation}\label{eq1}
{\rm p}_1^*\iota_*\mathcal{O}(\mathcal{H})\xrightarrow{\cong}
(\iota \times \id)_*{\rm p}_1^* \mathcal{O}(\mathcal{H})
\end{equation} 
and
\begin{equation}\label{eq2}
(\iota \times \id)_*{\rm m}^* \mathcal{O}(\mathcal{H}) \xrightarrow{\cong}
\mu^*\iota_*\mathcal{O}(\mathcal{H})
\end{equation} 
(``base change").

Let $\lambda:{\rm p}_1^*\mathcal{O}(\mathcal{H})\xrightarrow{\cong}
{\rm m}^*\mathcal{O}(\mathcal{H})$ be the isomorphism constructed in Part (1). Since $\iota$ is $\mathcal{H}$-equivariant, we get an isomorphism
\begin{equation}\label{eq3}
(\iota \times
\id)_*{\rm p}_1^* \mathcal{O}(\mathcal{H}) \xrightarrow{(\iota \times
\id)_*(\lambda)} (\iota \times \id)_*{\rm m}^* \mathcal{O}(\mathcal{H}).
\end{equation}
It is now straightforward to check, using the tensor decomposition (\ref{tensdecomp}), that the composition of isomorphisms (\ref{eq1}), (\ref{eq2}) and (\ref{eq3})
\begin{equation*}
{\rm p}_1^*\iota_*\mathcal{O}(\mathcal{H}) \xrightarrow{\cong} 
\mu^*\iota_*\mathcal{O}(\mathcal{H})
\end{equation*}
endows $\iota_*\mathcal{O}(\mathcal{H})$ with a structure of an $(\mathcal{H},\Psi)$-equivariant coherent sheaf on $\mathcal{G}$. Clearly, $\iota_*\mathcal{O}(\mathcal{H})$ is simple.

(3) Consider the right action $\id\times \mu:\mathcal{G}\times \mathcal{G}\times \mathcal{H}\to
\mathcal{G}\times \mathcal{G}$ of $\mathcal{H}$ on $\mathcal{G}\times \mathcal{G}$. If $M\in
{\rm sCoh}_{\rm f}^{(\mathcal{H},\Psi)}(\mathcal{G})$, it is clear that $X\boxtimes M\in \text{sCoh}_{\rm f}(\mathcal{G}\times \mathcal{G})$ is an $(\mathcal{H},\Psi)$-equivariant coherent sheaf on $\mathcal{G}\times \mathcal{G}$ (here we identify $\mathcal{H}$ with the supergroup subscheme  $\{1\}\times \mathcal{H}\subset \mathcal{G}\times \mathcal{G}$). But since ${\rm m}:\mathcal{G}\times \mathcal{G}\to \mathcal{G}$ is $\mathcal{H}$-equivariant, ${\rm m}_*$
carries $(\mathcal{H},\Psi)$-equivariant coherent sheaves  on $\mathcal{G}\times \mathcal{G}$ to $(\mathcal{H},\Psi)$-equivariant coherent sheaves on $\mathcal{G}$.
\end{proof}

\section{Exact module categories over ${\rm sCoh}_{\rm f}(\mathcal{G})$}\label{exmodcatscoh}

In this section we extend \cite[Section 3.3]{G} to the super case.

Let $\mathcal{G}$, $\mathcal{H}$, $\iota$ and $\Psi$ be as in \ref{equiqcohsh}. Set 
$$
\mathcal{M}=\mathcal{M}(\mathcal{H},\Psi):={\rm sCoh}_{\rm f}^{(\mathcal{H},\Psi)}(\mathcal{G}),\,\,\,\mathcal{M}^{\circ}=\mathcal{M}^{\circ}(\mathcal{H},\Psi):={\rm Coh}_{\rm f}^{(\mathcal{H},\Psi)}(\mathcal{G}),
$$
and let
$$\mathcal{V}=\mathcal{V}(\mathcal{H},\Psi):={\rm Comod}_{\Pro({\rm sCoh}_{\rm f}(\mathcal{G}))}(\widehat{\mathcal{O}(\mathcal{H})_{\Psi}}),$$
$$\mathcal{V}^{\circ}=\mathcal{V}^{\circ}(\mathcal{H},\Psi):={\rm Comod}_{\Pro({\rm Coh}_{\rm f}(\mathcal{G}))}(\widehat{\mathcal{O}(\mathcal{H})_{\Psi}})$$ be the abelian
categories of right comodules over $\widehat{\mathcal{O}(\mathcal{H})_{\Psi}}$ in  
$\Pro({\rm sCoh}_{\rm f}(\mathcal{G}))$ and $\Pro({\rm Coh}_{\rm f}(\mathcal{G}))$, respectively.

Let  
$$\delta=\delta_{\mathcal{H}}:=\iota_*\mathcal{O}(\mathcal{H})\in {\rm sCoh}_{\rm f}^{(\mathcal{H},\Psi)}(\mathcal{G}),\,\,\,\delta^{\circ}=\delta^{\circ}_{\mathcal{H}}:=\iota_*\mathcal{O}(\mathcal{H})\in {\rm Coh}_{\rm f}^{(\mathcal{H},\Psi)}(\mathcal{G}).$$ 

\begin{proposition}\label{grsch}
The following hold:
\begin{enumerate}
\item
The bifunctors
$$\ot^{\mathcal{M}}:{\rm sCoh}_{\rm f}(\mathcal{G})\boxtimes \mathcal{M}\to
\mathcal{M},\,\, X\boxtimes M\mapsto {\rm m}_*(X\boxtimes M)$$
and
$$\ot^{\mathcal{M}^{\circ}}:{\rm sCoh}_{\rm f}(\mathcal{G})\boxtimes \mathcal{M}^{\circ}\to
\mathcal{M}^{\circ},\,\, X\boxtimes M\mapsto {\rm m}_*(X\boxtimes M)$$
define on $\mathcal{M}$ and $\mathcal{M}^{\circ}$  structures of indecomposable   
${\rm sCoh}_{\rm f}(\mathcal{G})$-module categories.
\item 
The bifunctors
$$\ot^{\mathcal{V}}:{\rm sCoh}_{\rm f}(\mathcal{G})\boxtimes \mathcal{V}
\to \mathcal{V},\,\, X\boxtimes V\mapsto {\rm m}_*(X\boxtimes V)$$
and
$$\ot^{\mathcal{V}^{\circ}}:{\rm sCoh}_{\rm f}(\mathcal{G})\boxtimes \mathcal{V}^{\circ}
\to \mathcal{V}^{\circ},\,\, X\boxtimes V\mapsto {\rm m}_*(X\boxtimes V)$$
define on $\mathcal{V}$ and $\mathcal{V}^{\circ}$ structures of ${\rm sCoh}_{\rm f}(\mathcal{G})$-module categories.
\item
We have equivalences $\mathcal{M}\cong \mathcal{V}$ and $\mathcal{M}^{\circ}\cong \mathcal{V}^{\circ}$ of 
module categories over ${\rm sCoh}_{\rm f}(\mathcal{G})$. In particular, $\overline{\Hom}(\delta,\delta)\cong\widehat{\mathcal{O}(\mathcal{H})_{\Psi}}$ as supercoalgebras in $\Pro({\rm sCoh}_{\rm f}(\mathcal{G}))$, and $\overline{\Hom}(\delta^{\circ},\delta^{\circ})\cong\widehat{\mathcal{O}(\mathcal{H})_{\Psi}}$ as coalgebras in $\Pro({\rm Coh}_{\rm f}(\mathcal{G}))$.
\end{enumerate}
\end{proposition}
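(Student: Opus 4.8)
The plan is to mirror the strategy of \cite[Section 3.3]{G}, using the dictionary between equivariant sheaves and comodules established in Lemma \ref{eqcscom}, together with the ``building block'' results of Proposition \ref{simpleex}.

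\emph{Step 1: module category structures (Parts (1) and (2)).} First I would observe that the bifunctor $X\boxtimes M\mapsto \mathrm{m}_*(X\boxtimes M)$ is well-defined on $\mathcal{M}$ and $\mathcal{M}^\circ$ by Proposition \ref{simpleex}(3), and that the associativity and unit constraints of $\mathrm{sCoh}_{\mathrm f}(\mathcal{G})$ as a module category over itself (these come from associativity and normalization of $\mathrm{m}$, exactly as in Lemma \ref{modcatstr}) are automatically $(\mathcal{H},\Psi)$-equivariant morphisms, since the equivariance condition only constrains the right $\mathcal{H}$-action and $\mathrm{m}$ is $\mathcal{H}$-equivariant on the right factor. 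Biexactness of $\mathrm{m}_*$ on finite sheaves is standard (it is even exact, since $\mathrm{m}$ is flat / the underlying functor is just tensor product over $k$ after restriction). The same argument, applied with $\Pro$-objects and the comodule structures, gives the module structures on $\mathcal{V}$ and $\mathcal{V}^\circ$; alternatively, one transports the structure across Lemma \ref{eqcscom} once Part (3) is in place. For indecomposability of $\mathcal{M}$ and $\mathcal{M}^\circ$, I would use the criterion from \ref{Module categories over tensor categories}: it suffices to exhibit a simple object that generates, and $\delta=\iota_*\mathcal{O}(\mathcal{H})$ is simple by Proposition \ref{simpleex}(2). To see it generates, take any $M\in\mathcal{M}$ supported on finitely many $\mathcal{H}_0$-cosets; pulling back along translations one reduces to showing every $(\mathcal{H},\Psi)$-equivariant sheaf supported near the identity coset is a subquotient of some $\mathrm{m}_*(X\boxtimes\delta)$, which follows because the forgetful/restriction functor to $\mathrm{sCoh}_{\mathrm f}(\mathcal{H})$ together with the semisimplicity of rank $2$ from Proposition \ref{simpleex}(1) shows any such $M$ is built from $\delta$ and $\delta^-$, and $\delta^-=\mathrm{m}_*(k^{0\mid1}\boxtimes\delta)$.

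\emph{Step 2: the equivalence (Part (3)).} Lemma \ref{eqcscom} already gives abelian equivalences $\mathcal{M}\cong\mathcal{V}$ and $\mathcal{M}^\circ\cong\mathcal{V}^\circ$, so the content is to check these are \emph{module} equivalences, i.e., compatible with the two $\ot$ actions both defined by $\mathrm{m}_*(X\boxtimes -)$. I would do this by inspecting the adjunction used in the proof of Lemma \ref{eqcscom}: the isomorphism $\mu_*\mathrm{p}_1^*(S)\cong S\ot\widehat{\mathcal{O}(\mathcal{H})}$ is natural in $S$ and compatible with applying $\mathrm{m}_*(X\boxtimes -)$ because $\mathrm{m}$ and $\mu$ commute in the appropriate sense ($\mathrm{m}$ acts on the ``$\mathcal{G}$'' factor, $\mu$ on the right ``$\mathcal{H}$'' factor, and these are disjoint), so the comodule map $\rho_\lambda$ attached to $\mathrm{m}_*(X\boxtimes S)$ is exactly $\id_X\ot\rho_\lambda$ under $\mathrm{m}_*$ — that is, $\mathrm{m}_*(X\boxtimes -)$ intertwines the equivariant structure with the comodule structure on the nose. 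This is a diagram chase using base change and the projection formula; I would spell it out at the level of the defining squares in Definition \ref{defequiv}. Once the equivalences are module equivalences, the final assertions about internal Hom follow formally: since $\mathcal{M}$ is indecomposable and (as will be shown in the next results, or by the exactness criterion of \ref{Module categories over tensor categories} applied here) exact, $\mathcal{M}\cong\Comod_{\Pro(\mathcal{C})}(\overline{\Hom}(\delta,\delta))$, and comparing with $\mathcal{M}\cong\mathcal{V}=\Comod_{\Pro(\mathrm{sCoh}_{\mathrm f}(\mathcal{G}))}(\widehat{\mathcal{O}(\mathcal{H})_\Psi})$ under an equivalence sending $\delta$ to the distinguished generator forces $\overline{\Hom}(\delta,\delta)\cong\widehat{\mathcal{O}(\mathcal{H})_\Psi}$ as coalgebras (uniqueness of the coalgebra of the internal Hom of a generator, cf. \cite{EO}); the same for $\delta^\circ$ and $\mathcal{V}^\circ$.

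\emph{Main obstacle.} The routine-looking but genuinely delicate point is verifying that the cocycle $\Psi$ enters \emph{the same way} on both sides — i.e., that the twisted comultiplication $\Delta_\Psi$ of $\widehat{\mathcal{O}(\mathcal{H})_\Psi}$ matches the $\id\boxtimes\Psi$ twist appearing in the equivariance hexagon of Definition \ref{defequiv}(1), and that this matching is preserved after applying $\mathrm{m}_*(X\boxtimes-)$. Unwinding the adjunction while keeping track of where $\Psi$ is multiplied (on $\mathcal{G}\times\mathcal{H}\times\mathcal{H}$ versus inside $\widehat{\mathcal{O}(\mathcal{H})_\Psi}\ot\widehat{\mathcal{O}(\mathcal{H})_\Psi}$) requires care with the $\mathcal{O}(\mathcal{G}_0)$-comodule/tensor decomposition \eqref{tensdecomp} and with signs coming from the super structure; this is exactly the place where the ``straightforward to verify'' in Lemma \ref{eqcscom} and in Proposition \ref{simpleex}(1) has to be combined coherently. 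Everything else is a transcription of the even-case arguments of \cite[Section 3.3]{G}.
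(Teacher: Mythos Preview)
Your proposal is correct and follows essentially the same route as the paper: Part (1) via Proposition \ref{simpleex} and associativity of $\mathrm{m}$, indecomposability from $\delta$ generating; Part (2) by extending the comodule structure along $\mathrm{m}_*(X\boxtimes-)$; Part (3) from Lemma \ref{eqcscom}. The paper's proof of (3) is a one-liner, so your additional care about checking that the abelian equivalence respects the module structures is more than the paper writes out but is in the same spirit. One small point: your derivation of $\overline{\Hom}(\delta,\delta)\cong\widehat{\mathcal{O}(\mathcal{H})_\Psi}$ invokes exactness of $\mathcal{M}$, which is only proved in the \emph{next} proposition, so as written there is a forward reference; you can avoid it by noting that the module equivalence $\mathcal{M}\cong\mathcal{V}$ carries $\delta$ to the regular $\widehat{\mathcal{O}(\mathcal{H})_\Psi}$-comodule $\iota_*\mathcal{O}(\mathcal{H})$ (this is what the equivariant structure in Proposition \ref{simpleex}(1)--(2) unwinds to), and then computing $\overline{\Hom}$ of the regular comodule directly in $\mathcal{V}$ gives the coalgebra itself, no exactness needed.
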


\begin{proof}
We prove it for $\mathcal{M}$ and $\mathcal{V}$, the proof for $\mathcal{M}^{\circ}$ and $\mathcal{V}^{\circ}$ being similar.

(1) Since ${\rm m}({\rm m}\times \id)={\rm m}(\id\times {\rm m})$ and $\Psi$ is an even $2$-cocycle,
it follows from Lemma \ref{simpleex} that $\ot^{\mathcal{M}}$
defines on $\mathcal{M}$ a structure of a ${\rm sCoh}_{\rm f}(\mathcal{G})$-module
category. Clearly, ${\rm sCoh}_{\rm f}(\mathcal{H})\subset{\rm sCoh}_{\rm f}(\mathcal{G})$ consists of those objects $X$ for which $X\ot ^{\mathcal{M}} \delta $ is a sum of multiples of $\delta$ and $k^{0|1}\ot \delta$, and any object $M\in \mathcal{M}$ is of the form $X\ot ^{\mathcal{M}}\delta$ for some $X\in {\rm sCoh}_{\rm f}(\mathcal{G})$. In particular, the simple object $\delta$ (see Proposition \ref{simpleex}) generates $\mathcal{M}$, so $\mathcal{M}$ is indecomposable.

(2) By definition, an object in $\mathcal{V}$ is a pair $(V,\rho_V)$ consisting of an object
$V\in \Pro({\rm sCoh}_{\rm f}(\mathcal{G}))$
and a morphism $\rho_V:V\to V\ot\widehat{\mathcal{O}(\mathcal{H})_{\Psi}}$ in
$\Pro({\rm sCoh}_{\rm f}(\mathcal{G}))$ satisfying the comodule axioms. It is clear that for every 
$X\in{\rm sCoh}_{\rm f}(\mathcal{G})$, we have ${\rm m}_*(X\boxtimes V)\in\Pro({\rm sCoh}_{\rm f}(\mathcal{G}))$  and that $\rho_{{\rm m}_*(X\boxtimes
V)}:=\id_X\ot\rho_V$ is a morphism in $\Pro({\rm sCoh}_{\rm f}(\mathcal{G}))$ defining on
${\rm m}_*(X\boxtimes V)$ a structure of a right comodule over
$\widehat{\mathcal{O}(\mathcal{H})_{\Psi}}$.

(3) Follows from Lemma \ref{eqcscom}.
\end{proof}

\begin{example}\label{impexs}
Let $\mathcal{G}$ be an affine supergroup scheme over $k$. 
\begin{enumerate}
\item
$\mathcal{M}(\{1\},1)={\rm sCoh}_{\rm f}(\mathcal{G})$ (the regular module).
\item
$\mathcal{M}^{\circ}(\{1\},1)={\rm Coh}_{\rm f}(\mathcal{G})$. 
\item
$\mathcal{M}(\mathcal{G},1)=\text{sVect}$ (the standard superfiber functor on ${\rm sCoh}_{\rm f}(\mathcal{G})$).
\item
$\mathcal{M}^{\circ}(\mathcal{G},1)=\text{Vect}$ (the standard fiber functor on ${\rm sCoh}_{\rm f}(\mathcal{G})$).
\end{enumerate}
\end{example}

\begin{proposition}\label{indecmod}
The indecomposable module categories $\mathcal{M}(\mathcal{H},\Psi)$ and $\mathcal{M}^{\circ}(\mathcal{H},\Psi)$ over ${\rm sCoh}_{\rm f}(\mathcal{G})$ are exact.
\end{proposition}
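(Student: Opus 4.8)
The plan is to verify exactness via the injectivity/projectivity criterion recalled in Section \ref{Module categories over tensor categories}: a module category $\mathcal{M}$ over a tensor category $\mathcal{C}$ is exact if and only if $I\ot^{\mathcal{M}}M$ is injective in $\Ind(\mathcal{M})$ for every injective $I\in\Ind(\mathcal{C})$ and every $M\in\mathcal{M}$ (equivalently, the projective statement in the pro-completion). By Proposition \ref{grsch}, since $\delta$ (resp.\ $\delta^{\circ}$) generates $\mathcal{M}$ (resp.\ $\mathcal{M}^{\circ}$), it suffices to check this for $M=\delta$: indeed every object of $\mathcal{M}$ is a subquotient of some $X\ot^{\mathcal{M}}\delta$, and $I\ot^{\mathcal{M}}(X\ot^{\mathcal{M}}\delta)\cong(I\ot X)\ot^{\mathcal{M}}\delta$ with $I\ot X$ again injective in $\Ind(\mathcal{C})$, so injectivity for $\delta$ propagates. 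Here $\mathcal{C}={\rm sCoh}_{\rm f}(\mathcal{G})$ and $\Ind(\mathcal{C})$ is the category of locally finite quasi-coherent sheaves on $\mathcal{G}$.

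First I would identify the injective cogenerator. The Hopf algebra $\overline{\mathcal{O}(\mathcal{G})}$ is supercommutative-type (a Radford biproduct of a supercommutative Hopf superalgebra), so $\Ind({\rm sCoh}_{\rm f}(\mathcal{G}))$ is equivalent to the category of all $\overline{\mathcal{O}(\mathcal{G})}$-comodules, which has enough injectives; the injective objects are precisely the direct summands of cofree comodules $W\ot\overline{\mathcal{O}(\mathcal{G})}$. Geometrically, an injective quasi-coherent sheaf on $\mathcal{G}$ supported near $g\in\mathcal{G}_0(k)$ is a direct summand of a product of copies of $P_g=\widehat{\mathcal{O}(\mathcal{G}_0)_g}$ tensored with $\wedge\g_1^*$, using the decomposition \eqref{scohf}. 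So it is enough to show that $P\ot^{\mathcal{M}}\delta$ is projective in $\Pro(\mathcal{M})$ for $P$ the indecomposable projective generator of $\Pro({\rm sCoh}_{\rm f}(\mathcal{G}))$ — equivalently, that ${\rm m}_*(P\boxtimes\delta)$ is projective in $\Pro({\rm sCoh}_{\rm f}^{(\mathcal{H},\Psi)}(\mathcal{G}))$.

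The key computation is that ${\rm m}_*(P\boxtimes\delta_{\mathcal{H}})$, where $\delta_{\mathcal{H}}=\iota_*\mathcal{O}(\mathcal{H})$, is isomorphic (up to the $(\mathcal{H},\Psi)$-equivariant structure) to the pushforward along $\iota$ of the regular pro-module $\widehat{\mathcal{O}(\mathcal{H})}$ equipped with its cofree-type structure — concretely, it should be $\widehat{\mathcal{O}(\mathcal{G})}\otimes(\text{something from }\wedge\g_1^*)$ viewed as a cofree comodule over $\widehat{\mathcal{O}(\mathcal{H})_{\Psi}}$ in $\Pro({\rm sCoh}_{\rm f}(\mathcal{G}))$, hence projective in $\Pro(\mathcal{V})\cong\Pro(\mathcal{M})$ by the standard fact that cofree comodules are projective objects in a comodule category when the underlying category has enough projectives. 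In other words, I would trace through the equivalence $\mathcal{M}\cong\mathcal{V}={\rm Comod}_{\Pro({\rm sCoh}_{\rm f}(\mathcal{G}))}(\widehat{\mathcal{O}(\mathcal{H})_{\Psi}})$ of Proposition \ref{grsch}(3), under which $\ot^{\mathcal{M}}$ corresponds to $X\boxtimes V\mapsto{\rm m}_*(X\boxtimes V)$ with the coaction $\id_X\ot\rho_V$, and observe that $P\ot^{\mathcal{M}}\delta$ becomes the cofree $\widehat{\mathcal{O}(\mathcal{H})_{\Psi}}$-comodule $P\ot^{\mathcal{M}}\delta\cong(P\ot^{\mathcal{M}}\delta)'\ot\widehat{\mathcal{O}(\mathcal{H})_{\Psi}}$ on an appropriate object of $\Pro({\rm sCoh}_{\rm f}(\mathcal{G}))$.

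The main obstacle will be the equivariance bookkeeping: checking that the cofree comodule structure on ${\rm m}_*(P\boxtimes\delta)$ really does match the one coming from the $(\mathcal{H},\Psi)$-equivariant structure on $\delta=\iota_*\mathcal{O}(\mathcal{H})$ constructed in Proposition \ref{simpleex}(2), i.e.\ that the twist $\Psi$ is correctly absorbed. This amounts to the commutativity of a base-change diagram combining the isomorphisms \eqref{eq1}, \eqref{eq2}, \eqref{eq3} with the associativity ${\rm m}({\rm m}\times\id)={\rm m}(\id\times{\rm m})$ and the $2$-cocycle identity for $\Psi$; this is exactly the super-analogue of the computation in \cite[Section 3.3]{G}, and I would carry it out using the tensor decomposition \eqref{tensdecomp} to reduce to the purely even case plus the semisimple factor $\sRep(\wedge\g_1^*)$, where it is immediate. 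The statement for $\mathcal{M}^{\circ}$ and $\mathcal{V}^{\circ}$ follows by the same argument, replacing ${\rm sCoh}_{\rm f}(\mathcal{G})$-modules by $\mathcal{O}(\mathcal{G})$-modules and $\delta$ by $\delta^{\circ}$.
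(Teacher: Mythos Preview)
Your approach is essentially the paper's: reduce the projectivity criterion to the generator $\delta$, then compute $P\ot^{\mathcal{M}}\delta$ and show it is projective in $\Pro(\mathcal{M})$. Two points where the paper is sharper than your outline:

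First, the reduction to $\delta$: you say every object of $\mathcal{M}$ is a \emph{subquotient} of some $X\ot^{\mathcal{M}}\delta$, but subquotient does not propagate projectivity. Proposition~\ref{grsch}(1) actually gives the stronger statement that every object of $\mathcal{M}$ \emph{equals} $X\ot^{\mathcal{M}}\delta$ for some $X\in{\rm sCoh}_{\rm f}(\mathcal{G})$; then $P\ot^{\mathcal{M}}M=(P\ot X)\ot^{\mathcal{M}}\delta$ with $P\ot X$ projective (tensoring with a dualizable preserves projectivity), and the reduction is clean.

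Second, the paper inserts one more simplification you omit: since every indecomposable projective of $\Pro({\rm sCoh}_{\rm f}(\mathcal{G}))$ is of the form $P_{g,\pm}$ and $P_g=\delta_g\ot P_1$ with $\delta_g$ invertible, the autoequivalence $\delta_g\ot^{\mathcal{M}}(-)$ reduces everything to the single case $g=1$. The paper then \emph{identifies} $P_1\ot^{\mathcal{M}}\delta$ directly as $\widehat{\mathcal{O}(\mathcal{H})_1}\otimes_k P(\delta)$, a (completed) multiple of the projective cover of $\delta$, and is done. Your plan to recognize $P\ot^{\mathcal{M}}\delta$ as a cofree $\widehat{\mathcal{O}(\mathcal{H})_{\Psi}}$-comodule via the equivalence $\mathcal{M}\cong\mathcal{V}$ is morally the same computation viewed through Proposition~\ref{grsch}(3), but be careful with the slogan ``cofree comodules are projective'': cofree on an injective is injective, and one needs the pro/ind duality to turn this into the projectivity statement you want. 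The paper's explicit formula sidesteps that bookkeeping entirely, and the $\Psi$-compatibility you worry about is absorbed into the identification of $P(\delta)$ as the function algebra on the formal neighborhood of $\mathcal{H}$.
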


\begin{proof}
We prove it for $\mathcal{M}(\mathcal{H},\Psi)$, the proof for $\mathcal{M}^{\circ}(\mathcal{H},\Psi)$ being similar.

Set $\mathcal{M}:=\mathcal{M}(\mathcal{H},\Psi)$. It suffices to show that for every projective $P\in\Pro({\rm sCoh}_{\rm f}(\mathcal{G}))$ and $X\in \mathcal{M}$, $P\ot ^{\mathcal{M}} X$ is projective (see \ref{Module categories over tensor categories}). Clearly, it suffices to show it for $X:=\delta=\delta_{(\mathcal{H},\Psi)}$. Moreover, since any projective in $\Pro({\rm sCoh}_{\rm f}(\mathcal{G}))$ is a completed direct sum of $P_{g,\pm}$ (see \ref{The tensor category sCohfG}), it suffices to check that $P_g\ot ^{\mathcal{M}} \delta$ is projective. Furthermore, since $P_g=\delta_g\otimes P_1$, and $\delta_g\ot ^{\mathcal{M}} \,?$ is an autoequivalence of $\mathcal{M}$ as an abelian category (since $\delta_g$ is invertible), it suffices to do so for $g=1$. Finally, this is done just by computing this product explicitly using the definition, which yields that $P_1\ot ^{\mathcal{M}} \delta=\widehat{\mathcal{O}(\mathcal{H})_1}\otimes_k P(\delta)$, where $P(\delta)$ is the projective cover of $\delta$ (i.e., the unique indecomposable projective in the block of $\Pro(\mathcal{M})$ containing $\delta$; as a sheaf  on $\mathcal{G}$, it is the function algebra on the formal neighborhood of $\mathcal{H}$), and hence projective as desired.
\end{proof}

We say that two pairs $(\mathcal{H},\Psi)$ and $(\mathcal{H}',\Psi')$ are {\em conjugate} if there exists $g\in \mathcal{G}_0(k)$ such that $g\mathcal{H}g^{-1}=\mathcal{H}'$ and $\Psi^g=\Psi'$ in $H^2(\mathcal{H}',\mathbb{G}_m)$.

\begin{lemma}\label{indecmodeqclass}
If $(\mathcal{H},\Psi)$ and $(\mathcal{H}',\Psi')$ are conjugate then 
$$\mathcal{M}(\mathcal{H},\Psi)\cong\mathcal{M}(\mathcal{H}',\Psi')\,\,\,\text{and}\,\,\,\mathcal{M}^{\circ}(\mathcal{H},\Psi)\cong\mathcal{M}^{\circ}(\mathcal{H}',\Psi')$$ as module categories over ${\rm sCoh}_{\rm f}(\mathcal{G})$.
\end{lemma}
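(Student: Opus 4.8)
The plan is to exhibit the equivalence directly from a conjugating element. Suppose $g\in\mathcal{G}_0(k)$ satisfies $g\mathcal{H}g^{-1}=\mathcal{H}'$ and $\Psi^g=\Psi'$ in $H^2(\mathcal{H}',\mathbb{G}_m)$; write $c_g:\mathcal{G}\to\mathcal{G}$, $x\mapsto gxg^{-1}$, for the conjugation automorphism, so that $c_g$ restricts to an isomorphism $\mathcal{H}\xrightarrow{\cong}\mathcal{H}'$. First I would note that conjugation by $g$ on the right action of $\mathcal{H}$ on $\mathcal{G}$ matches conjugation by $g$ on the right action of $\mathcal{H}'$, in the sense that $\mu_{\mathcal{H}'}\circ(c_g\times c_g)=c_g\circ\mu_{\mathcal{H}}$ as morphisms $\mathcal{G}\times\mathcal{H}\to\mathcal{G}$ (after the identification of $\mathcal{H}$ with $\mathcal{H}'$ via $c_g$), and similarly $\mathrm{p}_1\circ(c_g\times c_g)=c_g\circ\mathrm{p}_1$. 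Hence pullback along $c_g$ takes the defining commutative diagrams of an $(\mathcal{H},\Psi)$-equivariant sheaf to those of an $(\mathcal{H}',c_g^*\Psi)$-equivariant sheaf. This gives a functor
\[
(c_g)_*:\ {\rm sCoh}_{\rm f}^{(\mathcal{H},\Psi)}(\mathcal{G})\ \longrightarrow\ {\rm sCoh}_{\rm f}^{(\mathcal{H}',\,(c_g^{-1})^*\Psi)}(\mathcal{G}),
\]
which is an equivalence with quasi-inverse $(c_{g^{-1}})_*=(c_g^{-1})_*$, since $(c_g)_*$ is an invertible $k$-linear equivalence on $\text{sQCoh}(\mathcal{G})$ (it is pushforward along an automorphism) and the equivariance data are transported compatibly; the finite-support condition in $\mathcal{G}_0/\mathcal{H}_0$ is carried to finite support in $\mathcal{G}_0/\mathcal{H}'_0$ because $c_g$ induces a bijection of the coset spaces.

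Next I would check that this equivalence intertwines the ${\rm sCoh}_{\rm f}(\mathcal{G})$-module structures. The module action is $X\boxtimes M\mapsto{\rm m}_*(X\boxtimes M)$, and since ${\rm m}\circ(c_g\times c_g)=c_g\circ{\rm m}$ we have a natural isomorphism $(c_g)_*{\rm m}_*(X\boxtimes M)\cong{\rm m}_*((c_g)_*X\boxtimes(c_g)_*M)$. To turn this into a genuine module functor for the same action of ${\rm sCoh}_{\rm f}(\mathcal{G})$ on both sides, I would precompose with the tensor \emph{auto}equivalence $(c_g)_*$ of ${\rm sCoh}_{\rm f}(\mathcal{G})$ itself (this is the key point making the bookkeeping go through: conjugation by $g\in\mathcal{G}_0(k)$ is an automorphism of $\mathcal{O}(\mathcal{G})$ and hence of the tensor category), i.e. realize the module equivalence as $M\mapsto(c_g)_*M$ with module structure constraint coming from ${\rm m}\circ(c_g\times c_g)=c_g\circ{\rm m}$; one then verifies the pentagon-type coherence axiom, which is routine from functoriality of pushforward along the group automorphism $c_g$.

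The remaining gap is that the hypothesis only gives $\Psi^g=\Psi'$ in \emph{cohomology}, i.e. $(c_g^{-1})^*\Psi$ and $\Psi'$ differ by a coboundary $d\xi$ for some normalized even $1$-cochain $\xi\in C^1(\mathcal{H}',\mathbb{G}_m)$, equivalently an invertible even element $\xi\in\mathcal{O}(\mathcal{H}')^\times$ with $\varepsilon(\xi)=1$. So I would add a second step: for cohomologous cocycles $\Psi_1,\Psi_2=\Psi_1\cdot d\xi$ on a fixed $\mathcal{H}'$, construct an equivalence ${\rm sCoh}_{\rm f}^{(\mathcal{H}',\Psi_1)}(\mathcal{G})\cong{\rm sCoh}_{\rm f}^{(\mathcal{H}',\Psi_2)}(\mathcal{G})$ by sending $(S,\lambda)$ to $(S,\lambda')$ where $\lambda'$ is $\lambda$ twisted by multiplication by the pullback of $\xi$ along $\mu$ (using the $1$-cochain to adjust the equivariance isomorphism; the cocycle identity relating $\Psi_1,\Psi_2$ is exactly what makes the hexagon in Definition \ref{defequiv}(1) commute for $\lambda'$). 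This step is the identity on underlying sheaves, hence tautologically a module functor. Composing the two steps yields $\mathcal{M}(\mathcal{H},\Psi)\cong\mathcal{M}(\mathcal{H}',\Psi')$, and the identical argument with $\text{sQCoh}$ replaced by $\text{QCoh}$ throughout gives $\mathcal{M}^{\circ}(\mathcal{H},\Psi)\cong\mathcal{M}^{\circ}(\mathcal{H}',\Psi')$. The main obstacle is purely organizational rather than mathematical: keeping track of which tensor autoequivalence of ${\rm sCoh}_{\rm f}(\mathcal{G})$ one precomposes with so that both sides are literally module categories over the \emph{same} tensor category, and verifying the coherence (module-functor) axiom for the composite — everything else reduces to base-change/functoriality of pushforward along $c_g$ and the $2$-cocycle identities, which I would not write out in detail.
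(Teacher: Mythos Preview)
Your second step (adjusting by a $1$-cochain when $\Psi^g$ and $\Psi'$ are only cohomologous) is fine, but the first step has a real gap at exactly the point you flag as ``the key point''. The identity ${\rm m}\circ(c_g\times c_g)=c_g\circ{\rm m}$ gives you a natural isomorphism
\[
(c_g)_*\bigl(X\otimes^{\mathcal{M}}M\bigr)\;\cong\;(c_g)_*X\ \otimes^{\mathcal{M}'}\ (c_g)_*M,
\]
which is \emph{not} a module-functor constraint: what you need is $(c_g)_*\bigl(X\otimes^{\mathcal{M}}M\bigr)\cong X\otimes^{\mathcal{M}'}(c_g)_*M$. Saying you ``precompose with the tensor autoequivalence $(c_g)_*$'' does not close this gap; it only shows that $(c_g)_*$ is a module equivalence from $\mathcal{M}(\mathcal{H},\Psi)$ to $\mathcal{M}(\mathcal{H}',\Psi^g)$ \emph{with the ${\rm sCoh}_{\rm f}(\mathcal{G})$-action on the target twisted by $(c_g)_*$}, and you still owe an argument that this twisted module category is equivalent to the untwisted one. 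That is true---because $(c_g)_*$ is the \emph{inner} autoequivalence $X\mapsto \delta_g\otimes X\otimes\delta_{g^{-1}}$, and twisting a module category by an inner autoequivalence returns an equivalent module category via $M\mapsto \delta_{g^{-1}}\otimes M$---but you do not say this.

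The clean repair is to replace conjugation by right translation $r_{g}:x\mapsto xg^{-1}$. Since ${\rm m}\circ(\id\times r_{g})=r_{g}\circ{\rm m}$, the functor $(r_{g})_*$ is a strict ${\rm sCoh}_{\rm f}(\mathcal{G})$-module functor, and since $r_{g}\circ\mu_{\mathcal{H}}=\mu_{\mathcal{H}'}\circ(r_{g}\times c_g)$ it carries $(\mathcal{H},\Psi)$-equivariance to $(\mathcal{H}',\Psi^g)$-equivariance. (Composing your $(c_g)_*$ with $\delta_{g^{-1}}\otimes-$ yields exactly $(r_{g})_*$.) The paper takes a different route: it invokes Proposition~\ref{grsch}(3), noting that after passing to the coalgebra description one has $\widehat{\mathcal{O}(\mathcal{H})_\Psi}\cong\widehat{\mathcal{O}(g\mathcal{H}g^{-1})_{\Psi^g}}$ in $\Pro({\rm sCoh}_{\rm f}(\mathcal{G}))$---which, unpacked, is the statement that replacing the generator $\delta_{\mathcal{H}}$ by $\delta_g\otimes\delta_{\mathcal{H}}$ produces the conjugate coalgebra, hence the same comodule (module) category.
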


\begin{proof}
Follows from Proposition \ref{grsch} since for every $g\in \mathcal{G}_0(k)$, we have $\widehat{\mathcal{O}(\mathcal{H})_{\Psi}}\cong \widehat{\mathcal{O}(g\mathcal{H}g^{-1})_{\Psi^g}}$ as supercoalgebras in $\Pro({\rm sCoh}_{\rm f}(\mathcal{G}))$. 
\end{proof}

We are now ready to state and prove the main result of
this section.

\begin{theorem}\label{grsch1}
Let $\mathcal{G}$ be an affine supergroup scheme over $k$. There is
a $1:2$ correspondence between conjugacy classes of pairs $(\mathcal{H},\Psi)$ and
equivalence classes of indecomposable exact module categories over
${\rm sCoh}_{\rm f}(\mathcal{G})$, assigning $(\mathcal{H},\Psi)$ to $\mathcal{M}(\mathcal{H},\Psi)$ and $\mathcal{M}^{\circ}(\mathcal{H},\Psi)$.
\end{theorem}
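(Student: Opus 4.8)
The plan is to show that the assignment $(\mathcal{H},\Psi) \mapsto \{\mathcal{M}(\mathcal{H},\Psi), \mathcal{M}^{\circ}(\mathcal{H},\Psi)\}$ is a well-defined bijection onto equivalence classes of indecomposable exact module categories over ${\rm sCoh}_{\rm f}(\mathcal{G})$. Well-definedness and indecomposability/exactness are already in hand: Proposition \ref{grsch} gives the module structure and indecomposability, Proposition \ref{indecmod} gives exactness, and Lemma \ref{indecmodeqclass} shows the construction depends only on the conjugacy class. It also remains to observe that $\mathcal{M}(\mathcal{H},\Psi) \not\cong \mathcal{M}^{\circ}(\mathcal{H},\Psi)$ as ${\rm sCoh}_{\rm f}(\mathcal{G})$-module categories — this is the reason the correspondence is $1{:}2$ rather than $1{:}1$ — which follows because the action of the tensor subcategory ${\rm sCoh}_{\rm f}(\mathcal{G}_0)$ (equivalently, the parity-grouplike $u$) acts on $\mathcal{M}(\mathcal{H},\Psi)$ with two non-isomorphic simple objects $\delta, k^{0|1}\otimes\delta$ over each geometric point, whereas on $\mathcal{M}^{\circ}(\mathcal{H},\Psi)$ there is only one; alternatively, the coalgebra $\widehat{\mathcal{O}(\mathcal{H})_\Psi}$ is realized in $\Pro({\rm sCoh}_{\rm f}(\mathcal{G}))$ in the first case and in $\Pro({\rm Coh}_{\rm f}(\mathcal{G}))$ in the second, and the latter is not a coalgebra object in the former in a way that would make the comodule categories equivalent as module categories.

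The substantive content is surjectivity together with the injectivity of the $(\mathcal{H},\Psi)$-to-module-category map (modulo conjugacy and the $1{:}2$ ambiguity). For surjectivity, let $\mathcal{M}$ be an arbitrary indecomposable exact module category over $\mathcal{C}={\rm sCoh}_{\rm f}(\mathcal{G})$ and pick a simple generator $\delta_{\mathcal{M}}\in\mathcal{M}$ (available since $\mathcal{M}$ is indecomposable exact, as recalled in \ref{Module categories over tensor categories}). By the last paragraph of \ref{Module categories over tensor categories} we have a module equivalence $\mathcal{M}\cong \Comod_{\Pro(\mathcal{C})}(A)$ where $A := \overline{\Hom}(\delta_{\mathcal{M}},\delta_{\mathcal{M}})$ is a coalgebra in $\Pro(\mathcal{C}) = \Pro({\rm sCoh}_{\rm f}(\mathcal{G}))$. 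So it suffices to identify $A$, up to the appropriate equivalence, with some $\widehat{\mathcal{O}(\mathcal{H})_\Psi}$ (or with its ${\rm Coh}_{\rm f}(\mathcal{G})$-analogue). Dualizing, $A^* = \underline{\Hom}(\delta_{\mathcal{M}},\delta_{\mathcal{M}})$ is an algebra object in $\Ind(\mathcal{C})$, i.e. a (locally finite, ind-finite) quasi-coherent sheaf of algebras on $\mathcal{G}$ with its convolution product; equivalently, using the equivalence $\Rep(\overline{\mathcal{O}(\mathcal{G})})\cong {\rm sRep}(\mathcal{O}(\mathcal{G}))$ of \eqref{we have an equivalence of tensor categories}, a commutative-up-to-sign algebra object in the ind-completion. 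The key step is a \emph{reconstruction} argument: show that any such algebra object $A^*$ in $\Ind(\mathcal{C})$ that arises as $\underline{\Hom}(\delta_{\mathcal{M}},\delta_{\mathcal{M}})$ for an exact module category must be of the form $\widehat{\mathcal{O}(\mathcal{H})}$ (as a sheaf of algebras on the formal neighborhood of a closed supergroup subscheme $\mathcal{H}$), with coalgebra structure twisted by a $2$-cocycle $\Psi$ on $\mathcal{H}$. This is exactly the super-analogue of \cite[Section 3.3]{G}, and I would follow that template.

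In more detail, the reconstruction proceeds as follows. First, restrict attention to one block: by \eqref{scohf} the unit object of $\mathcal{C}$ decomposes over $\mathcal{G}_0(k)$, so $\delta_{\mathcal{M}}$ is supported over some $\mathcal{G}_0$-orbit, and after conjugating we may assume $1\in\mathcal{G}_0(k)$ lies in the support; the ``stabilizer'' of this structure cuts out the subgroup scheme $\mathcal{H}_0\subset\mathcal{G}_0$ and, via the odd derivations / the $\wedge\g_1^*$-factor in \eqref{tensdecomp}, an $\mathcal{H}_0$-invariant subspace $\h_1\subset\g_1$ with $[\h_1,\h_1]\subset\h_0$, hence a closed supergroup subscheme $\mathcal{H}\subset\mathcal{G}$ by the bijection recalled in \ref{gsch}. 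Concretely, $\mathcal{H}$ is determined as the ``support'' of $A^*$ as a sheaf on $\mathcal{G}$ (which is a formal subscheme, closed under the group structure by the algebra axioms for $A^*$ with respect to convolution — this uses that $\underline{\Hom}(\delta,\delta)\otimes\underline{\Hom}(\delta,\delta)\to\underline{\Hom}(\delta,\delta)$ comes from ${\rm m}_*$), and the exactness of $\mathcal{M}$ forces $A^*\cong\widehat{\mathcal{O}(\mathcal{H})}$ rather than a thickening — precisely as the computation $P_1\otimes^{\mathcal{M}}\delta = \widehat{\mathcal{O}(\mathcal{H})_1}\otimes_k P(\delta)$ in the proof of Proposition \ref{indecmod} shows in the reverse direction. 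Then the coalgebra structure on $A=\widehat{\mathcal{O}(\mathcal{H})}^*$, compared with the standard one, differs by an invertible even element $\Psi\in\mathcal{O}(\mathcal{H})^{\otimes 2}$, and coassociativity plus counitality of $A$ translate exactly into the twist (normalized even $2$-cocycle) equations for $\Psi$; whether $u$ acts trivially or not on $\delta_{\mathcal{M}}$ distinguishes the two cases $\mathcal{M}\cong\mathcal{M}^{\circ}(\mathcal{H},\Psi)$ versus $\mathcal{M}\cong\mathcal{M}(\mathcal{H},\Psi)$ (via Lemma \ref{tensubcat} applied to the image of ${\rm sCoh}_{\rm f}(\mathcal{H})$ or ${\rm Coh}_{\rm f}(\mathcal{H})$ in $\mathcal{C}$). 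Finally, injectivity: if $\mathcal{M}(\mathcal{H},\Psi)\cong\mathcal{M}(\mathcal{H}',\Psi')$ as $\mathcal{C}$-module categories, an equivalence sends the simple generator $\delta_{\mathcal{H}}$ to some simple generator of $\mathcal{M}(\mathcal{H}',\Psi')$, which (up to applying $\delta_g\otimes^{\mathcal{M}}-$ for $g\in\mathcal{G}_0(k)$, and up to $k^{0|1}\otimes-$) is $\delta_{\mathcal{H}'}$; comparing the dual internal Homs via Proposition \ref{grsch}(3) gives $\widehat{\mathcal{O}(\mathcal{H})_\Psi}\cong\widehat{\mathcal{O}(g\mathcal{H}'g^{-1})_{\Psi'^{\,g}}}$ as supercoalgebras in $\Pro(\mathcal{C})$, forcing $\mathcal{H}=g\mathcal{H}'g^{-1}$ and $\Psi=\Psi'^{\,g}$ in $H^2$, i.e. $(\mathcal{H},\Psi)$ and $(\mathcal{H}',\Psi')$ are conjugate.

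The main obstacle is the reconstruction step — showing that the algebra $A^* = \underline{\Hom}(\delta_{\mathcal{M}},\delta_{\mathcal{M}})$ coming from an exact module category is forced to be $\widehat{\mathcal{O}(\mathcal{H})}$ for a closed supergroup subscheme $\mathcal{H}$, and not some larger or non-reduced or ``curved'' algebra object in $\Ind({\rm sCoh}_{\rm f}(\mathcal{G}))$. In the even case this is \cite[Section 3.3]{G}; the super case requires carefully tracking the $\wedge\g_1^*$-factor of \eqref{tensdecomp} and the parity grouplike $u$ through the argument, using the abelian decomposition \eqref{btensdecomp1} and the structure of the indecomposable projectives $P_{g,\pm}$, but no genuinely new idea beyond \cite{G} — the point is that supercommutativity of $\mathcal{O}(\mathcal{G})$ behaves well enough under the Radford biproduct $\overline{\mathcal{O}(\mathcal{G})}$ (which is an \emph{ordinary} Hopf algebra) that the even-case reconstruction, rephrased in terms of $\Rep(\overline{\mathcal{O}(\mathcal{G})})$, applies verbatim up to keeping track of $u$.
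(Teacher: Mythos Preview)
Your overall structure is right --- well-definedness via Propositions \ref{grsch}, \ref{indecmod} and Lemma \ref{indecmodeqclass}, then surjectivity and injectivity --- and your identification of the reconstruction of $\overline{\Hom}(\delta,\delta)$ as the crux is correct. But your route to that reconstruction is genuinely different from the paper's, and the step you flag as ``the main obstacle'' is more of a gap than you suggest.

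The paper does \emph{not} analyze the internal Hom algebra $A^*$ directly via its support as a sheaf on $\mathcal{G}$. Instead it first splits on whether $\delta\cong k^{0|1}\otimes^{\mathcal{M}}\delta=:\delta^-$; when $\delta\not\cong\delta^-$ it introduces the full subcategory
\[
\mathcal{C}:=\{X\in{\rm sCoh}_{\rm f}(\mathcal{G}) : X\otimes^{\mathcal{M}}\delta\cong \dim_k(X_0)\,\delta\oplus\dim_k(X_1)\,\delta^-\},
\]
checks by a direct computation that $\mathcal{C}$ is a tensor subcategory, and invokes Lemma \ref{tensubcat} to conclude $\mathcal{C}={\rm sCoh}_{\rm f}(\mathcal{H})$ or ${\rm Coh}_{\rm f}(\mathcal{H})$ for some closed (super)group subscheme $\mathcal{H}$. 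The fiber functor $F(X)=\Hom_{\mathcal{M}}(\delta\oplus\delta^-,X\otimes^{\mathcal{M}}\delta)$ on $\mathcal{C}$ then satisfies $F(X)=\overline{X}$, and its tensor structure \emph{is} the twist $\Psi\in\mathcal{O}(\mathcal{H})^{\otimes 2}$. Finally a Yoneda argument --- computing $\Hom_{\Pro({\rm sCoh}_{\rm f}(\mathcal{G}))}(\overline{\Hom}(\delta\oplus\delta^-,\delta\oplus\delta^-),X)=\overline{X_{\mathcal{H}}}=\Hom(\widehat{\mathcal{O}(\mathcal{H})_\Psi},X)$ for simple $X$ --- identifies the coalgebras, and Proposition \ref{grsch} finishes. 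The case $\delta\cong\delta^-$ gives $\mathcal{M}^{\circ}(\mathcal{H},\Psi)$ analogously. This is cleaner than your outline: Lemma \ref{tensubcat} absorbs all the hard algebra-geometric content (Hopf quotients of $\overline{\mathcal{O}(\mathcal{G})}$), and no support analysis or ``exactness rules out thickenings'' argument is needed.

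Your approach --- read off $\mathcal{H}$ as the support of $A^*$ and argue that exactness forces $A^*\cong\widehat{\mathcal{O}(\mathcal{H})}$ --- could in principle be made to work, but two warnings. First, your appeal to \cite[Section 3.3]{G} is misplaced: that section runs exactly the tensor-subcategory-plus-Yoneda argument above (the present proof is its explicit super extension), so you cannot import a support-based reconstruction from there. Second, your justification via the computation $P_1\otimes^{\mathcal{M}}\delta=\widehat{\mathcal{O}(\mathcal{H})_1}\otimes_k P(\delta)$ from the proof of Proposition \ref{indecmod} is circular: that computation already presupposes $\mathcal{M}=\mathcal{M}(\mathcal{H},\Psi)$. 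If you want to run your argument you must independently show that an exact indecomposable algebra in $\Ind({\rm sCoh}_{\rm f}(\mathcal{G}))$ arising as an internal End is, as an $\mathcal{O}(\mathcal{G})$-module algebra under convolution, the function algebra on the formal neighborhood of a closed supergroup subscheme; this is not ``no genuinely new idea beyond \cite{G}''.
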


\begin{proof}
By Proposition \ref{indecmod} and Lemma \ref{indecmodeqclass}, it remains to show that any indecomposable exact module category $\mathcal{M}$ over ${\rm sCoh}_{\rm f}(\mathcal{G})$ is of the form $\mathcal{M}(\mathcal{H},\Psi)$ or $\mathcal{M}^{\circ}(\mathcal{H},\Psi)$ for a unique pair $(\mathcal{H},\Psi)$ (up to conjugation). To this end, pick a simple object $\delta$ in $\mathcal{M}$, and let $\delta^{-}:=k^{0|1}\ot ^{\mathcal{M}} \delta$. There are two cases: either $\delta\ncong \delta^-$ or $\delta\cong \delta^-$.

Assume that $\delta\ncong \delta^-$. Consider the full subcategory
$$\mathcal{C}:=\{X\in {\rm sCoh}_{\rm f}(\mathcal{G})\mid X\ot ^{\mathcal{M}}\delta=\dim_k(X_0)\delta \oplus \dim_k(X_1) \delta^-\}\subset {\rm sCoh}_{\rm f}(\mathcal{G}).
$$
For every $X,Y\in \mathcal{C}$, we have 
\begin{eqnarray*}
\lefteqn{(X\ot Y)\ot ^{\mathcal{M}}\delta\cong
X\ot ^{\mathcal{M}}(Y\ot ^{\mathcal{M}}\delta)}\\
& \cong & X\ot ^{\mathcal{M}}(\dim_k(Y_0)\delta \oplus \dim_k(Y_1) \delta^-)\\
& \cong & \dim_k(Y_0)X\ot ^{\mathcal{M}}\delta \oplus 
\dim_k(Y_1)\Pi(X)\ot ^{\mathcal{M}}\delta\\
& \cong & \dim_k(Y_0)(\dim_k(X_0)\delta \oplus \dim_k(X_1) \delta^-)\oplus \\
& & 
\dim_k(Y_1)(\dim_k(X_1)\delta \oplus \dim_k(X_0) \delta^-)\\
& = & \dim_k(Y_0\ot X_0\oplus Y_1\ot X_1)\delta\oplus
\dim_k(Y_0\ot X_1\oplus Y_1\ot X_0)\delta^-. 
\end{eqnarray*}
Thus, $\mathcal{C}\subset {\rm sCoh}_{\rm f}(\mathcal{G})$ is a tensor subcategory. Moreover, the functor
$$F:\mathcal{C}\to \Vect,\,\,\,F(X)=\Hom_{\mathcal{M}}(\delta\oplus \delta^-,X\ot ^{\mathcal{M}} \delta),$$ together with the tensor structure
$F(\cdot)\otimes F(\cdot)\xrightarrow{\cong} F(\cdot\otimes \cdot)$
coming from the associativity constraint, is a fiber functor on $\mathcal{C}$. 

Now by Lemma \ref{tensubcat}, either  
$\mathcal{C}={\rm sCoh}_{\rm f}(\mathcal{H})$ for some closed {\em supergroup} subscheme $\mathcal{H}\subset\mathcal{G}$, or 
$\mathcal{C}={\rm Coh}_{\rm f}(\mathcal{H})$ for some closed {\em group} subscheme $\mathcal{H}\subset\mathcal{G}_0$. In any case, we see that for every $X\in\C$, $F(X)=\overline{X}$ is the underlying vector space of $X$. Thus we have a functorial isomorphism
$\overline{X}\otimes \overline{Y}\xrightarrow{\cong} \overline{X\otimes Y}$, which is nothing but an  
invertible even element $\Psi$ of $\mathcal{O}(\mathcal{H})^{\otimes 2}$, taking values in
$\mathbb{G}_m(k)$. Clearly, $\Psi$ is a twist for $\mathcal{O}(\mathcal{H})$.

Thus, we have obtained that if $\C={\rm sCoh}_{\rm f}(\mathcal{H})$ then the $\C$-submodule category $\langle \delta \rangle\subset \mathcal{M}$  
is equivalent to ${\rm sCoh}_{\rm f}^{(\mathcal{H},\Psi)}(\mathcal{H})$, and if $\C={\rm Coh}_{\rm f}(\mathcal{H})$ then the $\C$-submodule category $\langle \delta\rangle\subset \mathcal{M}$ is equivalent to ${\rm Coh}_{\rm f}^{(\mathcal{H},\Psi)}(\mathcal{H})$.
 
For $X\in {\rm sCoh}_{\rm f}(\mathcal{G})$, 
let $X_{\mathcal{H}}\in \C$ be the maximal subsheaf of $X$ which is scheme-theoretically supported on $\mathcal{H}$ (i.e., $\overline{X_{\mathcal{H}}}$ consists of all vectors in $\overline{X}$ which are annihilated by the defining ideal of $\mathcal{H}$ in $\mathcal{O}(\mathcal{G})$). Now, on the one hand, since for any $g\in \mathcal{G}_0(k)$, $\delta_g\ot ^{\mathcal{M}}\delta$ and $\delta_g\ot ^{\mathcal{M}}\delta^-$ are simple, and one of them is isomorphic to $\delta$ and the other one to $\delta^-$ if and only if $g\in \mathcal{H}_0(k)$, it is clear that
\begin{eqnarray*}
\lefteqn{\Hom_{\Pro({\rm sCoh}_{\rm f}(\mathcal{G}))}
(\overline{\Hom}(\delta\oplus \delta^-,\delta\oplus \delta^-),X)}\\
& = & \Hom_{\mathcal{M}}(\delta\oplus \delta^-,X\ot
^{\mathcal{M}}(\delta\oplus \delta^-))=\overline{X_{\mathcal{H}}}
\end{eqnarray*} 
(since it holds for any simple $X$). On the other hand, it is clear that $$\Hom_{\Pro({\rm sCoh}_{\rm f}(\mathcal{G}))}
(\widehat{\mathcal{O}(\mathcal{H})_{\Psi}},X)=\overline{X_{\mathcal{H}}}.$$
Thus by Yoneda's lemma, the two supercoalgebras $\overline{\Hom}(\delta\oplus \delta^-,\delta\oplus \delta^-)$ and $\widehat{\mathcal{O}(\mathcal{H})_{\Psi}}$ are isomorphic in $\Pro({\rm sCoh}_{\rm f}(\mathcal{G}))$. This implies that $\mathcal{M}$ is equivalent
to ${\rm Comod}_{\Pro({\rm sCoh}_{\rm f}(\mathcal{G}))}(\widehat{\mathcal{O}(\mathcal{H})_{\Psi}})$ as a module category over ${\rm sCoh}_{\rm f}(\mathcal{G})$ (as $\mathcal{M}$ is
indecomposable, exact, and generated by $\delta\oplus \delta^-$), hence to $\mathcal{M}(\mathcal{H},\Psi)$ by Proposition \ref{grsch}, as desired. 

Furthermore, the conjugacy class of $(\mathcal{H},\Psi)$ is uniquely determined by $\mathcal{M}$ since replacing $\delta$ with $\delta_g\ot ^{\mathcal{M}} \delta$, $g\in \mathcal{G}_0(k)$, corresponds to replacing $\mathcal{H}$ with $g\mathcal{H}g^{-1}$ and $\Psi$ with $\Psi^g$.

Finally, if $\delta\cong \delta^-$ then the proof that $\mathcal{M}\cong \mathcal{M}^{\circ}(\mathcal{H},\Psi)$ as module categories over ${\rm sCoh}_{\rm f}(\mathcal{G})$ for a uniquely determined conjugacy class of pairs, is similar.
\end{proof}

\begin{example}
Since for an affine group scheme $\mathcal{G}$ over $k$, we have ${\rm sCoh}_{\rm f}(\mathcal{G})={\rm Coh}_{\rm f}(\mathcal{G})\boxtimes {\rm sVect}$, we see that Theorem \ref{grsch1} reduces to \cite[Theorem 3.9]{G} in the even case.
\end{example}

\begin{example}\label{ex2}
Let $V$ be a $n$-dimensional odd $k$-vector space, $n\ge 0$. 
By Theorem \ref{grsch1}, equivalence classes of indecomposable exact module categories over $\text{sCoh}_{\rm f}(V)$ are in $2:1$ correspondence with equivalence classes of pairs $(W,B)$, where $W\subset V$ is a super subspace and $B\in S^2W^*$. For example, if $n=0$ then there are two non-equivalent indecomposable exact module categories over $\text{sCoh}_{\rm f}(V)=\text{sVect}$: $\text{Vect}$ and $\text{sVect}$.
Also, if $n=1$ then there are exactly three non-equivalent pairs of the form $(W,B)$: $(0,0)$, $(V,0)$ and $(V,B)$, where $B(v,v)=1$ ($v$ a fixed basis for $V$). Thus, there are six non-equivalent indecomposable exact module categories over $\text{sCoh}_{\rm f}(V)$ (in agreement with \cite[Theorem 4.5]{EO}). More precisely, we have $\mathcal{M}(0,0)\cong{\rm sCoh}_{\rm f}(V)$ and $\mathcal{M}^{\circ}(0,0)\cong{\rm Coh}_{\rm f}(V)$, $\mathcal{M}(V,0)$,   $\mathcal{M}(V,B)$, which are semisimple of rank $2$, and $\mathcal{M}^{\circ}(V,0)$, $\mathcal{M}^{\circ}(V,B)$, which are semisimple of rank $1$.
\end{example}

\begin{remark}\label{scohfgomega2}
Retain the notation from Remark \ref{scohfgomega1}.
Similarly, the categories ${\rm sCoh}_{\rm f}^{(\mathcal{H},\Psi)}(\mathcal{G},\Omega)$ and ${\rm Coh}_{\rm f}^{(\mathcal{H},\Psi)}(\mathcal{G},\Omega)$ admit a structure of an indecomposable exact module category over ${\rm sCoh}_{\rm f}(\mathcal{G},\Omega)$ given by convolution of sheaves, and furthermore, there is a $1:2$ correspondence between (appropriately defined) conjugacy classes of pairs $(\mathcal{H},\Psi)$ and
equivalence classes of indecomposable exact module categories over
${\rm sCoh}_{\rm f}(\mathcal{G},\Omega)$, assigning $(\mathcal{H},\Psi)$ to ${\rm sCoh}_{\rm f}^{(\mathcal{H},\Psi)}(\mathcal{G},\Omega)$ and ${\rm Coh}_{\rm f}^{(\mathcal{H},\Psi)}(\mathcal{G},\Omega)$.
\end{remark}

\section{Exact module categories over $\sRep(\mathcal{G})$}\label{repg}
In this section we extend \cite[Section 4]{G} to the super case.

Let $\mathcal{C}$ be a tensor category. Given two exact module categories $\mathcal{M}$, $\mathcal{N}$ over $\mathcal{C}$, let $\Fun_{\C}(\mathcal{M},\mathcal{N})$ denote the abelian category of $\mathcal{C}$-functors from $\mathcal{M}$ to $\mathcal{N}$. The \emph{dual category of $\mathcal{C}$ with respect to $\mathcal{M}$} is the category $\mathcal{C}^*_{\mathcal{M}}:=\End_{\mathcal{C}}(\mathcal{M})$ of $\mathcal{C}$-endofunctors of $\mathcal{M}$. If $\mathcal{M}$ is indecomposable, $\mathcal{C}^*_{\mathcal{M}}$ is a tensor category, and $\mathcal{M}$ is an indecomposable exact module category over $\mathcal{C}^*_{\mathcal{M}}$. Also, $\Fun_{\C}(\mathcal{M},\mathcal{N})$ is an exact module category over
$\mathcal{C}^*_{\mathcal{M}}$ via the composition of functors.

\subsection{Module categories.}\label{Module categories} 

Retain the notation from \ref{equiqcohsh} and \ref{exmodcatscoh}. Set 
$$
\mathcal{M}((\mathcal{G},1),(\mathcal{H},\Psi)):={\rm sCoh}_{\rm f}^{((\mathcal{G},1),(\mathcal{H},\Psi))}(\mathcal{G})$$
and
$$ 
\mathcal{M}^{\circ}((\mathcal{G},1),(\mathcal{H},\Psi)):={\rm Coh}_{\rm f}^{((\mathcal{G},1),(\mathcal{H},\Psi))}(\mathcal{G}).
$$ 

Recall that the $2$-cocycle $\Psi$ determines a central extension
$\mathcal{H}_{\Psi}$ of $\mathcal{H}$ by $\mathbb{G}_m$. By an {\em $(\mathcal{H},\Psi)$-superrepresentation} of $\mathcal{H}$ we will mean a rational representation of the affine supergroup scheme
$\mathcal{H}_{\Psi}$ on a $k$-supervector space on which $\mathbb{G}_m$ acts with weight $1$
(i.e., via the identity character). Let us denote the category of finite dimensional $(\mathcal{H},\Psi)$-superrepresentations of $\mathcal{H}_{\Psi}$ by $\mathcal{N}(\mathcal{H},\Psi)$. Clearly, we have an equivalence of abelian categories
$$\mathcal{N}(\mathcal{H},\Psi)\cong{\rm sComod}(\mathcal{O}(\mathcal{H})_{\Psi}).$$
Similarly, let $\mathcal{N}^{\circ}(\mathcal{H},\Psi)$ be the category of finite dimensional $(\mathcal{H},\Psi)$-{\em representations} of $\mathcal{H}_{\Psi}$. We have an equivalence of abelian categories
$$\mathcal{N}^{\circ}(\mathcal{H},\Psi)\cong{\rm Comod}(\mathcal{O}(\mathcal{H})_{\Psi}).$$

\begin{lemma}\label{modrep}
The following hold:
\begin{enumerate}
\item
We have abelian equivalences 
$$\Fun_{{\rm sCoh}_{\rm f}(\mathcal{G})}(\mathcal{M}^{\circ}(\mathcal{G},1),\mathcal{M}(\mathcal{H},\Psi))\cong {\mathcal{M}^{\circ}((\mathcal{G},1),(\mathcal{H},\Psi))}$$ 
and
$$\Fun_{{\rm sCoh}_{\rm f}(\mathcal{G})}(\mathcal{M}^{\circ}(\mathcal{G},1),\mathcal{M}^{\circ}(\mathcal{H},\Psi))\cong {\mathcal{M}((\mathcal{G},1),(\mathcal{H},\Psi))}.$$
In particular, we have a tensor equivalence $${\rm sCoh}_{\rm f}(\mathcal{G})^*_{\mathcal{M}^{\circ}(\mathcal{G},1)}\cong \sRep(\mathcal{G}).$$

\item
We have $\sRep(\mathcal{G})$-module equivalences
$$\Fun_{{\rm sCoh}_{\rm f}(\mathcal{G})}(\mathcal{M}^{\circ}(\mathcal{G},1),\mathcal{M}(\mathcal{H},\Psi))\cong \mathcal{N}^{\circ}(\mathcal{H},\Psi)$$
and
$$\Fun_{{\rm sCoh}_{\rm f}(\mathcal{G})}(\mathcal{M}^{\circ}(\mathcal{G},1),\mathcal{M}^{\circ}(\mathcal{H},\Psi))\cong \mathcal{N}(\mathcal{H},\Psi).$$
\end{enumerate}
\end{lemma}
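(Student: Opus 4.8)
The plan is to prove both parts by combining the general machinery of dual categories and internal Hom from \ref{Module categories over tensor categories} with the explicit computations already carried out in Proposition \ref{grsch}. For Part (1), the key observation is that $\mathcal{M}^{\circ}(\mathcal{G},1) = {\rm Coh}_{\rm f}(\mathcal{G})$, which is the module category realizing the standard fiber functor on ${\rm sCoh}_{\rm f}(\mathcal{G})$, and that by the general theory one has ${\rm sCoh}_{\rm f}(\mathcal{G})^*_{{\rm Coh}_{\rm f}(\mathcal{G})} \cong \Rep(\overline{\mathcal{O}(\mathcal{G})})^*_{\Vect} \cong \sRep(\mathcal{O}(\mathcal{G})) = \sRep(\mathcal{G})$; I would identify this dual tensor category carefully using the equivalence (\ref{we have an equivalence of tensor categories}) and the fact that ${\rm Coh}_{\rm f}(\mathcal{G})$ corresponds to $\Vect$ under the module functor of Lemma \ref{modcatstr}. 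Then for a general indecomposable exact module category $\mathcal{M}$ over ${\rm sCoh}_{\rm f}(\mathcal{G})$, the category $\Fun_{{\rm sCoh}_{\rm f}(\mathcal{G})}({\rm Coh}_{\rm f}(\mathcal{G}), \mathcal{M})$ is an exact module category over this dual $\sRep(\mathcal{G})$. Taking $\mathcal{M} = \mathcal{M}(\mathcal{H},\Psi)$, I would compute this functor category as $\mathcal{M}^{\circ}((\mathcal{G},1),(\mathcal{H},\Psi))$ by identifying $\mathcal{C}$-module functors from ${\rm Coh}_{\rm f}(\mathcal{G})$ with bimodule-equivariant sheaves, using Remark \ref{biequiv}: a module functor out of the fiber functor module category is determined by the image of the generator, together with a left-$\mathcal{G}$-equivariance datum coming from functoriality, which is exactly the $((\mathcal{G},1),(\mathcal{H},\Psi))$-biequivariance condition.

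Concretely, I would proceed as follows. First I would recall the general fact (from \cite{EO}, cited in \ref{Module categories over tensor categories}) that if $\mathcal{M}_1$ is an indecomposable exact $\mathcal{C}$-module category with $\mathcal{C}^*_{\mathcal{M}_1} = \mathcal{D}$, then for any exact $\mathcal{C}$-module category $\mathcal{M}_2$ one has $\Fun_{\mathcal{C}}(\mathcal{M}_1, \mathcal{M}_2) \cong \mathcal{M}_2 \boxtimes_{\mathcal{C}} \mathcal{M}_1^{\op}$ realized concretely via internal Hom. Second, I would take $\mathcal{M}_1 = {\rm Coh}_{\rm f}(\mathcal{G}) = \mathcal{M}^{\circ}(\mathcal{G},1)$ with generator $\delta^{\circ}_{\mathcal{G}}$ (the image of $\mathcal{O}(\mathcal{G})/\langle$odd$\rangle$-type generator, here simply the regular $\mathcal{O}(\mathcal{G})$-module viewed appropriately), and $\mathcal{M}_2 = \mathcal{M}(\mathcal{H},\Psi)$ with generator $\delta_{\mathcal{H}}$. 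A $\mathcal{C}$-functor $F: {\rm Coh}_{\rm f}(\mathcal{G}) \to \mathcal{M}(\mathcal{H},\Psi)$ is determined by $F(\delta^{\circ}_{\mathcal{G}}) \in \mathcal{M}(\mathcal{H},\Psi)$ together with the structure morphisms; since $\delta^{\circ}_{\mathcal{G}}$ carries a right $\mathcal{O}(\mathcal{G})$-comodule (equivalently left-$\mathcal{G}$-equivariant) structure as an object of $\Pro({\rm Coh}_{\rm f}(\mathcal{G}))$ via $\overline{\Hom}(\delta^{\circ}_{\mathcal{G}}, \delta^{\circ}_{\mathcal{G}}) \cong \widehat{\mathcal{O}(\mathcal{G})}$, the functor $F$ transports this to an extra equivariance structure on $F(\delta^{\circ}_{\mathcal{G}})$, which combined with the already-present $(\mathcal{H},\Psi)$-equivariance yields precisely an object of ${\rm sCoh}_{\rm f}^{((\mathcal{G},1),(\mathcal{H},\Psi))}(\mathcal{G})$ — and one checks the odd/even parity matching swaps $\mathcal{M}$ with $\mathcal{M}^{\circ}$, giving the two stated equivalences. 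The tensor equivalence ${\rm sCoh}_{\rm f}(\mathcal{G})^*_{\mathcal{M}^{\circ}(\mathcal{G},1)} \cong \sRep(\mathcal{G})$ is then the special case $\mathcal{H} = \mathcal{G}$, $\Psi = 1$, $\mathcal{M}(\mathcal{H},\Psi) = {\rm sVect}$, together with the standard reconstruction $\Rep(\overline{\mathcal{O}(\mathcal{G})})^*_{\Vect} \cong \Rep(\overline{\mathcal{O}(\mathcal{G})})$ and (\ref{we have an equivalence of tensor categories}).

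For Part (2), I would use the supercoalgebra identification from Proposition \ref{grsch}(3): $\mathcal{M}(\mathcal{H},\Psi) \cong \Comod_{\Pro({\rm sCoh}_{\rm f}(\mathcal{G}))}(\widehat{\mathcal{O}(\mathcal{H})_{\Psi}})$ and similarly for $\mathcal{M}^{\circ}$. By the dual-category reconstruction in \ref{Module categories over tensor categories}, computing $\Fun_{{\rm sCoh}_{\rm f}(\mathcal{G})}(\mathcal{M}^{\circ}(\mathcal{G},1), \mathcal{M}(\mathcal{H},\Psi))$ as a module category over $\sRep(\mathcal{G}) \cong {\rm sCoh}_{\rm f}(\mathcal{G})^*_{\mathcal{M}^{\circ}(\mathcal{G},1)}$ amounts to taking comodules over the internal End of a generator of this functor category, computed now inside $\Pro(\sRep(\mathcal{G}))$. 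Using the standard principle that internal End is insensitive to which side one works on — more precisely, that $\overline{\Hom}_{\mathcal{D}}$ of the generator of $\Fun_{\mathcal{C}}(\mathcal{M}_1,\mathcal{M}_2)$ is again $\widehat{\mathcal{O}(\mathcal{H})_{\Psi}}$ but now as a coalgebra in $\Pro(\sRep(\mathcal{G}))$, where the $\sRep(\mathcal{G})$-action is the natural one — I would conclude $\Fun_{{\rm sCoh}_{\rm f}(\mathcal{G})}(\mathcal{M}^{\circ}(\mathcal{G},1), \mathcal{M}(\mathcal{H},\Psi)) \cong \Comod_{\Pro(\sRep(\mathcal{G}))}(\widehat{\mathcal{O}(\mathcal{H})_{\Psi}})$, and then observe that the latter is exactly ${\rm sComod}(\mathcal{O}(\mathcal{H})_{\Psi})$ with its $\sRep(\mathcal{G})$-module structure, i.e., $\mathcal{N}^{\circ}(\mathcal{H},\Psi)$ in the parity-swapped convention; the parity bookkeeping again interchanges $\mathcal{N}$ and $\mathcal{N}^{\circ}$ for the two cases. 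The equivalences are $\sRep(\mathcal{G})$-module equivalences because all the identifications respect the relevant actions, which are transported through the abelian equivalence of Lemma \ref{eqcscom} and the module structure of Proposition \ref{grsch}(2).

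The main obstacle I anticipate is the careful tracking of parities and the identification of which side the $(\mathcal{G},1)$-equivariance imposes — specifically verifying that passing from the module functor category to its description via biequivariant sheaves genuinely produces the ${\rm sCoh}$ versus ${\rm Coh}$ (equivalently $\mathcal{N}$ versus $\mathcal{N}^{\circ}$) swap rather than preserving the variant; this is the super-analogue of the phenomenon that a fiber functor valued in $\Vect$ versus ${\rm sVect}$ sees the category differently, and it must be checked by examining how $k^{0|1} \ot {}-$ acts and how it interacts with the generator $\delta^{\circ}_{\mathcal{G}}$ of the fiber-functor module category. A secondary technical point is making precise the claim that the internal End computed in $\Pro(\sRep(\mathcal{G}))$ of the functor-category generator really is $\widehat{\mathcal{O}(\mathcal{H})_{\Psi}}$ with the expected coalgebra and action structure; I would do this by a Yoneda argument analogous to the one in the proof of Theorem \ref{grsch1}, identifying $\Hom_{\Pro(\sRep(\mathcal{G}))}(\overline{\Hom}(-,-), V)$ with the appropriate space of $\mathcal{H}$-supported vectors in $V$ for $V \in \sRep(\mathcal{G})$.
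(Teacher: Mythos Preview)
Your proposal contains a notational confusion that, while not fatal, obscures a major simplification. You write $\mathcal{M}^{\circ}(\mathcal{G},1) = {\rm Coh}_{\rm f}(\mathcal{G})$, but this is incorrect: by definition $\mathcal{M}^{\circ}(\mathcal{G},1) = {\rm Coh}_{\rm f}^{(\mathcal{G},1)}(\mathcal{G})$, which by Proposition \ref{simpleex}(1) is equivalent to $\Vect$ (see also Example \ref{impexs}(4)). The category ${\rm Coh}_{\rm f}(\mathcal{G})$ is $\mathcal{M}^{\circ}(\{1\},1)$, a different object. You do seem aware that $\mathcal{M}^{\circ}(\mathcal{G},1)$ is the fiber-functor module category, so this may just be a slip, but it leads you to invoke generators, $\overline{\Hom}(\delta^{\circ}_{\mathcal{G}},\delta^{\circ}_{\mathcal{G}})$, and comodule machinery where none is needed.

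The paper's argument is far more direct. For Part (1): since $\mathcal{M}^{\circ}(\mathcal{G},1)=\Vect$, an additive functor out of it is literally just an object $X\in\mathcal{M}(\mathcal{H},\Psi)$ (the image of $k$), and an ${\rm sCoh}_{\rm f}(\mathcal{G})$-module functor structure is a system of isomorphisms $\overline{S}\otimes X\cong S\otimes^{\mathcal{M}}X$ for $S\in{\rm sCoh}_{\rm f}(\mathcal{G})$, which is exactly a left $(\mathcal{G},1)$-equivariance on $X$ commuting with the right $(\mathcal{H},\Psi)$-equivariance. Specializing to $S=k^{0|1}$ gives an isomorphism $X_0\cong X_1$, which is precisely the passage from a biequivariant \emph{sheaf} to a biequivariant $\mathcal{O}(\mathcal{G})$-\emph{module}, i.e., from $\mathcal{M}$ to $\mathcal{M}^{\circ}$; this single line handles the parity bookkeeping you flag as the main obstacle. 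For Part (2), the paper avoids any internal-End or Yoneda computation in $\Pro(\sRep(\mathcal{G}))$: instead it writes down mutually inverse explicit functors, namely ``stalk at the identity'' $X\mapsto{\rm e}^*(X)$ (with the $(\mathcal{H},\Psi)$-action coming from $(h,h^{-1})\in\mathcal{G}\times\mathcal{H}$) and ``spread out'' $V\mapsto\mathcal{O}(\mathcal{G})\otimes_k V$. Your route via computing $\overline{\Hom}$ inside $\Pro(\sRep(\mathcal{G}))$ would eventually recover this, but at the cost of a Yoneda argument you yourself identify as a nontrivial technical point; the paper's concrete construction sidesteps it entirely.
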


\begin{proof}
We prove the theorem for functors to $\mathcal{M}(\mathcal{H},\Psi)$, the proof for functors to $\mathcal{M}^{\circ}(\mathcal{H},\Psi)$ being similar.

(1) Since $\mathcal{M}^{\circ}(\mathcal{G},1)=\Vect$, a functor $\mathcal{M}^{\circ}(\mathcal{G},1)\to\mathcal{M}(\mathcal{H},\Psi)$ is
just an $(\mathcal{H},\Psi)$-equivariant sheaf $X$ on $\mathcal{G}$. The fact that the
functor is a ${\rm sCoh}_{\rm f}(\mathcal{G})$-module functor means that we have functorial isomorphisms $\mu_S:\overline{S}\ot X\xrightarrow{\cong}S\ot X$ in $\mathcal{M}(\mathcal{H},\Psi)$, $S\in {\rm sCoh}_{\rm f}(\mathcal{G})$. Thus, $\mu$ gives $X$ a commuting
$\mathcal{G}$-equivariant structure for the left action of $\mathcal{G}$ on itself, i.e., $X$ is a $((\mathcal{G},1),(\mathcal{H},\Psi))$-biequivariant sheaf  on $\mathcal{G}$. In particular, for $S=k^{0\mid 1}$, we have an isomorphism $\mu_{k^{0\mid 1}}:X_0\xrightarrow{\cong}X_1$, hence $X$ corresponds to $((\mathcal{G},1),(\mathcal{H},\Psi))$-biequivariant $\mathcal{O}(\mathcal{G})$-module, as desired.

Conversely, it is clear that any $((\mathcal{G},1),(\mathcal{H},\Psi))$-biequivariant $\mathcal{O}(\mathcal{G})$-module $X_0$  
defines a ${\rm sCoh}_{\rm f}(\mathcal{G})$-module functor $\mathcal{M}^{\circ}(\mathcal{G},1)\to\mathcal{M}(\mathcal{H},\Psi)$ determined by $k\mapsto X$ with $X_0=X_1$.

Finally, the category of $(\mathcal{G},\mathcal{G})$-biequivariant sheaves
on $\mathcal{G}$ is equivalent to the category $\sRep(\mathcal{G})$ as a tensor category, and the second claim follows.

(2) If $X$ is a $((\mathcal{G},1),(\mathcal{H},\Psi))$-biequivariant $\mathcal{O}(\mathcal{G})$-module, then the
inverse image sheaf  ${\rm e}^{*}(X)$ on $\Spec(k)$ (``the stalk at $1$") acquires a structure of an $(\mathcal{H},\Psi)$-representation via the action
of the element $(h,h^{-1})$ in $\mathcal{G}\times \mathcal{H}$, i.e., it is an object in
$\mathcal{N}^{\circ}(\mathcal{H},\Psi)$. We have thus defined a functor $${\mathcal{M}^{\circ}((\mathcal{G},1),(\mathcal{H},\Psi))}\to \mathcal{N}^{\circ}(\mathcal{H},\Psi),\,\,X\mapsto {\rm e}^{*}(X).$$

Conversely, an $(\mathcal{H},\Psi)$-representation $V$ can be spread out over
$\mathcal{G}$ and made into a $(\mathcal{G},(\mathcal{H},\Psi))$-biequivariant $\mathcal{O}(\mathcal{G})$-module. In other words, we have the functor $$\mathcal{N}^{\circ}(\mathcal{H},\Psi)\to {\mathcal{M}^{\circ}((\mathcal{G},1),(\mathcal{H},\Psi))},\,\,V\mapsto \mathcal{O}(\mathcal{G})\otimes_k V.$$

Finally, it is straightforward to verify that the two functors constructed above are inverse to each other.
\end{proof}

Similarly, we have the following result.

\begin{lemma}\label{modrep1}
The following hold:
\begin{enumerate}
\item
We have abelian equivalences 
$$\Fun_{{\rm sCoh}_{\rm f}(\mathcal{G})}(\mathcal{M}(\mathcal{G},1),\mathcal{M}(\mathcal{H},\Psi))\cong {\mathcal{M}((\mathcal{G},1),(\mathcal{H},\Psi))}$$ 
and
$$\Fun_{{\rm sCoh}_{\rm f}(\mathcal{G})}(\mathcal{M}(\mathcal{G},1),\mathcal{M}^{\circ}(\mathcal{H},\Psi))\cong {\mathcal{M}^{\circ}((\mathcal{G},1),(\mathcal{H},\Psi))}.$$
In particular, we have a tensor equivalence $${\rm sCoh}_{\rm f}(\mathcal{G})^*_{\mathcal{M}(\mathcal{G},1)}\cong \sRep(\mathcal{G}).$$

\item
We have $\sRep(\mathcal{G})$-module equivalences
$$\Fun_{{\rm sCoh}_{\rm f}(\mathcal{G})}(\mathcal{M}(\mathcal{G},1),\mathcal{M}(\mathcal{H},\Psi))\cong \mathcal{N}(\mathcal{H},\Psi)$$
and
$$\Fun_{{\rm sCoh}_{\rm f}(\mathcal{G})}(\mathcal{M}(\mathcal{G},1),\mathcal{M}^{\circ}(\mathcal{H},\Psi))\cong \mathcal{N}^{\circ}(\mathcal{H},\Psi).$$ \qed
\end{enumerate}
\end{lemma}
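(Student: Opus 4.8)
The plan is to mirror the proof of Lemma \ref{modrep}, but starting from the regular supermodule category $\mathcal{M}(\mathcal{G},1)={\rm sCoh}_{\rm f}(\mathcal{G})$ rather than $\mathcal{M}^{\circ}(\mathcal{G},1)=\Vect$. First, recall from Lemma \ref{tensubcat} and Example \ref{impexs} that $\mathcal{M}(\mathcal{G},1)$ is the regular module category, generated by the unit object $\delta_{\mathcal{G}}=\iota_*\mathcal{O}(\mathcal{G})=\mathcal{O}(\mathcal{G})$ with $\delta_{\mathcal{G}}\not\cong\delta_{\mathcal{G}}^{-}$; thus a ${\rm sCoh}_{\rm f}(\mathcal{G})$-module functor $\mathcal{M}(\mathcal{G},1)\to\mathcal{M}(\mathcal{H},\Psi)$ is completely determined by the image $X$ of $\delta_{\mathcal{G}}$ together with the module functor structure $\mu_S:S\ot^{\mathcal{M}}\delta_{\mathcal{G}}\to S\ot^{\mathcal{M}(\mathcal{H},\Psi)}X$. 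Since $S\ot^{\mathcal{M}}\delta_{\mathcal{G}}=S$ (the unit acting regularly), this structure endows $X$ with a commuting action of ${\rm sCoh}_{\rm f}(\mathcal{G})$ by left translations, i.e. a left $(\mathcal{G},1)$-equivariant structure as in Remark \ref{biequiv}, so that $X$ becomes a $((\mathcal{G},1),(\mathcal{H},\Psi))$-biequivariant coherent sheaf on $\mathcal{G}$. Unlike the situation in Lemma \ref{modrep}, no passage $X_0\xrightarrow{\cong}X_1$ is forced here because $\delta_{\mathcal{G}}\not\cong\delta_{\mathcal{G}}^{-}$, so $X$ stays a \emph{sheaf} (i.e. an object of ${\rm sCoh}_{\rm f}^{((\mathcal{G},1),(\mathcal{H},\Psi))}(\mathcal{G})=\mathcal{M}((\mathcal{G},1),(\mathcal{H},\Psi))$), which is exactly the first claimed equivalence.

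Second, I would verify the reverse direction and functoriality: any $((\mathcal{G},1),(\mathcal{H},\Psi))$-biequivariant sheaf $X$ on $\mathcal{G}$ gives, via convolution $S\mapsto {\rm m}_*(S\boxtimes X)$ (which lands in $\mathcal{M}(\mathcal{H},\Psi)$ by Proposition \ref{simpleex}(3)), a ${\rm sCoh}_{\rm f}(\mathcal{G})$-module functor $\mathcal{M}(\mathcal{G},1)\to\mathcal{M}(\mathcal{H},\Psi)$ sending $\delta_{\mathcal{G}}$ to $X$, and these two assignments are mutually inverse. The case of functors to $\mathcal{M}^{\circ}(\mathcal{H},\Psi)$ is parallel, except that $\mathcal{M}^{\circ}(\mathcal{H},\Psi)={\rm Coh}_{\rm f}^{(\mathcal{H},\Psi)}(\mathcal{G})$ consists of $\mathcal{O}(\mathcal{G})$-\emph{modules} (where $u$ acts as $1$), so the image $X$ of $\delta_{\mathcal{G}}$ is now a biequivariant \emph{module}, landing in $\mathcal{M}^{\circ}((\mathcal{G},1),(\mathcal{H},\Psi))$; this gives the second abelian equivalence. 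Taking $(\mathcal{H},\Psi)=(\mathcal{G},1)$ in the second equivalence and using Lemma \ref{modrep}(1), or directly identifying $((\mathcal{G},1),(\mathcal{G},1))$-biequivariant sheaves on $\mathcal{G}$ with $\sRep(\mathcal{G})$ as a tensor category (left-right translations being the adjoint action), yields the tensor equivalence ${\rm sCoh}_{\rm f}(\mathcal{G})^*_{\mathcal{M}(\mathcal{G},1)}\cong\sRep(\mathcal{G})$, exactly as the dual of the regular module category of a tensor category is the category itself.

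Third, for part (2) I would identify the $\sRep(\mathcal{G})$-action. The tensor structure on $\sRep(\mathcal{G})={\rm sCoh}_{\rm f}(\mathcal{G})^*_{\mathcal{M}(\mathcal{G},1)}$ acts on $\Fun_{{\rm sCoh}_{\rm f}(\mathcal{G})}(\mathcal{M}(\mathcal{G},1),\mathcal{M}(\mathcal{H},\Psi))$ by precomposition of endofunctors. Under the identification of such a functor with $X\in\mathcal{M}((\mathcal{G},1),(\mathcal{H},\Psi))$, precomposition with a $\sRep(\mathcal{G})$-object $V$ (a biequivariant sheaf for the adjoint $\mathcal{G}$-action) corresponds to convolution on the left-$\mathcal{G}$ side, and passing to the stalk ${\rm e}^{*}(X)$ at the identity (as in the proof of Lemma \ref{modrep}(2)) converts this into the ordinary tensor product of $(\mathcal{H},\Psi)$-superrepresentations with the restriction of $V$ to $\mathcal{H}$, acting through $\mathcal{H}_{\Psi}$. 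So $X\mapsto{\rm e}^{*}(X)$ is the desired $\sRep(\mathcal{G})$-module equivalence $\mathcal{M}((\mathcal{G},1),(\mathcal{H},\Psi))\cong\mathcal{N}(\mathcal{H},\Psi)$; the inverse spreads an $(\mathcal{H},\Psi)$-superrepresentation $V$ out to $\mathcal{O}(\mathcal{G})\otimes_k V$ with its natural biequivariant structure. The analogous statement for $\mathcal{M}^{\circ}(\mathcal{H},\Psi)$ gives $\cong\mathcal{N}^{\circ}(\mathcal{H},\Psi)$.

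The main obstacle is bookkeeping rather than conceptual: one must carefully track the parity-twisting element $u$ through the biequivariance diagrams — in particular checking that for $\mathcal{M}(\mathcal{G},1)$ the module-functor structure genuinely does \emph{not} collapse the $\mathbb{Z}/2$-grading of $X$ (so the target is sheaves, not modules), whereas for $\mathcal{M}^{\circ}(\mathcal{H},\Psi)$ it does — and to confirm that the stalk functor ${\rm e}^{*}$ intertwines left-convolution by adjoint-equivariant sheaves on $\mathcal{G}$ with the monoidal structure of $(\mathcal{H},\Psi)$-superrepresentations coming from restriction along $\mathcal{H}_{\Psi}\to\mathcal{G}$. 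All of these are routine verifications using the tensor decomposition (\ref{tensdecomp}) and base change, strictly parallel to Lemma \ref{modrep}, so I would simply say "the proof is similar to that of Lemma \ref{modrep}" after indicating the one change (starting generator $\delta_{\mathcal{G}}$ in place of $k$, and hence sheaves vs.\ modules swapping roles on the target side).
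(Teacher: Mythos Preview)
Your overall conclusion is correct and matches the paper exactly: the paper simply writes ``Similarly, we have the following result'' and puts a \qed, so ``the proof is similar to that of Lemma \ref{modrep}'' is precisely what is intended.

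However, your detailed write-up contains a genuine misidentification that would derail the argument if taken literally. You repeatedly call $\mathcal{M}(\mathcal{G},1)$ ``the regular (super)module category'' and write $S\otimes^{\mathcal{M}}\delta_{\mathcal{G}}=S$. This is wrong: by Example \ref{impexs}(3), $\mathcal{M}(\mathcal{G},1)=\mathrm{sVect}$ is the \emph{superfiber functor}, not the regular module $\mathrm{sCoh}_{\rm f}(\mathcal{G})=\mathcal{M}(\{1\},1)$. If $\mathcal{M}(\mathcal{G},1)$ really were the regular module, then $\Fun_{\mathrm{sCoh}_{\rm f}(\mathcal{G})}(\mathcal{M}(\mathcal{G},1),\mathcal{M}(\mathcal{H},\Psi))$ would be $\mathcal{M}(\mathcal{H},\Psi)$ itself (only $(\mathcal{H},\Psi)$-equivariant, no left $\mathcal{G}$-equivariance), contradicting the statement.

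The correct parallel to Lemma \ref{modrep} is: $\mathcal{M}(\mathcal{G},1)=\mathrm{sVect}$ versus $\mathcal{M}^{\circ}(\mathcal{G},1)=\Vect$. A module functor $\mathrm{sVect}\to\mathcal{M}(\mathcal{H},\Psi)$ is an $(\mathcal{H},\Psi)$-equivariant sheaf $X$ (the image of $k$) together with functorial isomorphisms $\mu_S:\overline{S}\otimes X\xrightarrow{\cong}S\otimes X$, giving $X$ its left $(\mathcal{G},1)$-equivariance exactly as in Lemma \ref{modrep}(1). The only change from that proof is that here $k^{0|1}$ acts nontrivially on $\mathrm{sVect}$ (swapping the two simples), so $\mu_{k^{0|1}}$ does \emph{not} force $X_0\cong X_1$; hence $X$ remains a biequivariant \emph{sheaf} rather than collapsing to a module. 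You correctly identified this mechanism, so once you replace ``regular module category'' by ``$\mathrm{sVect}$'' and $S\otimes^{\mathcal{M}}\delta_{\mathcal{G}}=S$ by $S\otimes^{\mathcal{M}}\delta_{\mathcal{G}}=\overline{S}$, your sketch is fine.
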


\begin{example}
We have the following:
\begin{enumerate}
\item
$\mathcal{N}(\{1\},1)=\text{sVect}$ is the usual superfiber functor on $\sRep(\mathcal{G})$.
\item
$\mathcal{N}^{\circ}(\{1\},1)=\Vect$ is the usual fiber functor on $\sRep(\mathcal{G})$.
\end{enumerate} 
\end{example}

\begin{lemma}\label{modfun1}
The following hold:
\begin{enumerate}
\item
We have a tensor equivalence 
$$\sRep(\mathcal{G})_{\mathcal{N}^{\circ}(\{1\},1)}^*\cong{\rm sCoh}_{\rm f}(\mathcal{G}).$$ 
\item
We have ${\rm sCoh}_{\rm f}(\mathcal{G})$-module equivalences
$$\Fun_{\sRep(\mathcal{G})}\left(\mathcal{N}^{\circ}(\{1\},1),\mathcal{N}(\mathcal{H},\Psi)\right)\cong \mathcal{M}^{\circ}(\mathcal{H},\Psi)$$
and
$$\Fun_{\sRep(\mathcal{G})}\left(\mathcal{N}^{\circ}(\{1\},1),\mathcal{N}^{\circ}(\mathcal{H},\Psi)\right)\cong \mathcal{M}(\mathcal{H},\Psi).$$
\end{enumerate}
\end{lemma}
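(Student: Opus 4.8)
The plan is to derive both parts formally from Lemma \ref{modrep} together with the double-duality theory for (locally finite) tensor categories of \cite{EO}. Write $\mathcal{C}:={\rm sCoh}_{\rm f}(\mathcal{G})$ and $\mathcal{M}:=\mathcal{M}^{\circ}(\mathcal{G},1)=\Vect$, viewed as an exact indecomposable $\mathcal{C}$-module category; by Lemma \ref{modrep}(1) there is a tensor equivalence $\mathcal{C}^*_{\mathcal{M}}\cong\sRep(\mathcal{G})$. Since $\mathcal{M}$ is exact and indecomposable it is automatically an exact indecomposable $\mathcal{C}^*_{\mathcal{M}}$-module category, and the one preliminary observation I need is that, under $\mathcal{C}^*_{\mathcal{M}}\cong\sRep(\mathcal{G})$, this module structure is $\mathcal{N}^{\circ}(\{1\},1)$. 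This is visible from the description of the equivalence in the proof of Lemma \ref{modrep}(1): $\mathcal{C}^*_{\mathcal{M}}$ is the category of $(\mathcal{G},\mathcal{G})$-biequivariant sheaves on $\mathcal{G}$, and the endofunctor of $\mathcal{M}=\Vect$ attached to such a sheaf (equivalently, to a rational $\mathcal{G}$-representation $V$) sends $k$ to the stalk at $1$, i.e.\ to the underlying vector space of $V$ --- which is exactly the fiber functor $\mathcal{N}^{\circ}(\{1\},1)$.

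Part (1) is then immediate: the double-duality theorem of \cite{EO} gives a tensor equivalence $(\mathcal{C}^*_{\mathcal{M}})^*_{\mathcal{M}}\cong\mathcal{C}$, whose left-hand side is $\sRep(\mathcal{G})^*_{\mathcal{N}^{\circ}(\{1\},1)}$ by the previous paragraph and whose right-hand side is ${\rm sCoh}_{\rm f}(\mathcal{G})$. In particular, for any exact $\sRep(\mathcal{G})$-module category $\mathcal{L}$, the category $\Fun_{\sRep(\mathcal{G})}(\mathcal{N}^{\circ}(\{1\},1),\mathcal{L})$ is an exact module category over $\sRep(\mathcal{G})^*_{\mathcal{N}^{\circ}(\{1\},1)}\cong{\rm sCoh}_{\rm f}(\mathcal{G})$ by composition.

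For part (2), recall from \cite{EO} that $\Phi:=\Fun_{\mathcal{C}}(\mathcal{M},-)$ is a $2$-equivalence from the $2$-category of exact $\mathcal{C}$-module categories to that of exact $\mathcal{C}^*_{\mathcal{M}}=\sRep(\mathcal{G})$-module categories; in particular it is an equivalence on Hom-categories, so $\Fun_{\sRep(\mathcal{G})}(\Phi(\mathcal{N}_1),\Phi(\mathcal{N}_2))\cong\Fun_{\mathcal{C}}(\mathcal{N}_1,\mathcal{N}_2)$ for exact $\mathcal{C}$-module categories $\mathcal{N}_1,\mathcal{N}_2$, and for $\mathcal{N}_1=\mathcal{C}$ this is an equivalence of module categories over ${\rm sCoh}_{\rm f}(\mathcal{G})\cong\sRep(\mathcal{G})^*_{\Phi(\mathcal{C})}$ by part (1). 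Now Lemma \ref{modrep}(2) gives $\Phi(\mathcal{M}(\mathcal{H},\Psi))\cong\mathcal{N}^{\circ}(\mathcal{H},\Psi)$ and $\Phi(\mathcal{M}^{\circ}(\mathcal{H},\Psi))\cong\mathcal{N}(\mathcal{H},\Psi)$ as $\sRep(\mathcal{G})$-module categories, while $\Phi(\mathcal{C})=\Phi(\mathcal{M}(\{1\},1))\cong\mathcal{N}^{\circ}(\{1\},1)$ (Example \ref{impexs}(1) and the first paragraph). Therefore
\[
\Fun_{\sRep(\mathcal{G})}\bigl(\mathcal{N}^{\circ}(\{1\},1),\,\mathcal{N}(\mathcal{H},\Psi)\bigr)\cong\Fun_{\mathcal{C}}\bigl(\mathcal{C},\,\mathcal{M}^{\circ}(\mathcal{H},\Psi)\bigr)\cong\mathcal{M}^{\circ}(\mathcal{H},\Psi)
\]
and, in the same way, $\Fun_{\sRep(\mathcal{G})}(\mathcal{N}^{\circ}(\{1\},1),\mathcal{N}^{\circ}(\mathcal{H},\Psi))\cong\mathcal{M}(\mathcal{H},\Psi)$, both as ${\rm sCoh}_{\rm f}(\mathcal{G})$-module categories, which is the claim. (Equivalently: a quasi-inverse of $\Phi$ is $\mathcal{L}\mapsto\Fun_{\sRep(\mathcal{G})}(\mathcal{N}^{\circ}(\{1\},1),\mathcal{L})$, and one applies it directly to Lemma \ref{modrep}(2).)

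The \textbf{main difficulty} is bookkeeping rather than mathematics: one must fix the left/right and opposite conventions of \ref{Module categories over tensor categories} consistently so that (a) $\mathcal{M}^{\circ}(\mathcal{G},1)$ and $\mathcal{N}^{\circ}(\{1\},1)$ are genuinely identified as $\sRep(\mathcal{G})$-module categories, and (b) the dual-category identification of part (1) turns the tautological $\mathcal{C}^*_{\mathcal{M}}$-action appearing in the Hom-category (equivalently, in the quasi-inverse of $\Phi$) into an honest ${\rm sCoh}_{\rm f}(\mathcal{G})$-module structure rather than a ${\rm sCoh}_{\rm f}(\mathcal{G})^{\mathrm{op}}$-module structure. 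Once these conventions are matched, everything follows from Lemma \ref{modrep} and the cited results of \cite{EO} with no further computation.
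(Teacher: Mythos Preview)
Your argument is a genuinely different route from the paper's. The paper's proof is a one-line ``similar to Lemma~\ref{modrep}'': one repeats the direct computation of Lemma~\ref{modrep}, unpacking an $\sRep(\mathcal{G})$-module functor $\mathcal{N}^{\circ}(\{1\},1)\to\mathcal{N}(\mathcal{H},\Psi)$ (resp.\ $\mathcal{N}^{\circ}(\mathcal{H},\Psi)$) as the data of an object of $\mathcal{M}^{\circ}(\mathcal{H},\Psi)$ (resp.\ $\mathcal{M}(\mathcal{H},\Psi)$), with no appeal to duality. You instead try to deduce everything formally from Lemma~\ref{modrep} via the double-duality/2-equivalence formalism of \cite{EO}.

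The problem is that the results you invoke from \cite{EO} are proved there for \emph{finite} tensor categories, whereas $\mathcal{C}={\rm sCoh}_{\rm f}(\mathcal{G})$ is not finite when $\mathcal{G}$ is not finite. Concretely: you assert that $\Phi=\Fun_{\mathcal{C}}(\mathcal{M},-)$ is a $2$-equivalence onto exact $\sRep(\mathcal{G})$-module categories, but Remark~\ref{notall} shows this fails in general (there exist non-geometrical module categories), and the paper itself flags \cite[Theorem 3.31]{EO} as the extra input available only in the finite case (see \S\ref{fin}). What your argument actually needs is not essential surjectivity but (i) the double-dual identification $(\mathcal{C}^*_{\mathcal{M}})^*_{\mathcal{M}}\cong\mathcal{C}$ and (ii) full faithfulness of $\Phi$ on Hom-categories. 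In this generality these are precisely the content of Theorem~\ref{modg}, whose proof in the paper \emph{uses} Lemma~\ref{modfun1}; so as written your argument is either unjustified (citing \cite{EO} outside its scope) or circular. If you restrict to finite $\mathcal{G}$ your proof is correct and pleasantly conceptual; for general $\mathcal{G}$ you must supply (i) and (ii) by hand, and the cleanest way to do that is the direct computation the paper indicates.
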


\begin{proof}
The proof is similar to the proof of Lemma \ref{modrep}.
\end{proof}

Lemma \ref{modfun1} prompts the following definition.

\begin{definition}
An indecomposable exact module category $\mathcal{N}$ over
$\sRep(\mathcal{G})$ is called {\em geometrical} if $\Fun_{\sRep(\mathcal{G})}(\mathcal{N}^{\circ}(\{1\},1),\mathcal{N})\ne 0$.
\end{definition}

It is clear that geometrical module categories over
$\sRep(\mathcal{G})$ form a full $2$-subcategory $\Mod_{\rm geom}(\sRep(\mathcal{G}))$ of the $2$-category $\Mod(\sRep(\mathcal{G}))$. 

We can now deduce from Lemmas \ref{modrep}, \ref{modfun1} the main result of this section, which says that geometrical module categories over $\sRep(\mathcal{G})$ are precisely those exact module categories which come from exact module categories over ${\rm sCoh}_{\rm f}(\mathcal{G})$. More precisely, we have the following generalization of \cite[Theorem 4.5]{G}.

Recall that $(\mathcal{H},\Psi)$ and $(\mathcal{H}',\Psi')$ are {\em conjugate} if there exists $g\in \mathcal{G}_0(k)$ such that $g\mathcal{H}g^{-1}=\mathcal{H}'$ and $\Psi^g=\Psi'$ in $H^2(\mathcal{H}',\mathbb{G}_m)$.

\begin{theorem}\label{modg}
Let $\mathcal{G}$ be an affine supergroup scheme over $k$. Then the $2$-functors
$$\Mod({\rm sCoh}_{\rm f}(\mathcal{G}))\to \Mod_{\rm geom}(\sRep(\mathcal{G})),\,
\mathcal{M}\mapsto\Fun_{{\rm sCoh}_{\rm f}(\mathcal{G})}(\mathcal{M}^{\circ}(\mathcal{G},1),\mathcal{M}),$$ 
and
$$\Mod_{\rm geom}(\sRep(\mathcal{G}))\to \Mod({\rm sCoh}_{\rm f}(\mathcal{G})),\,
\mathcal{N}\mapsto\Fun_{\sRep(\mathcal{G})}(\mathcal{N}^{\circ}(\{1\},1),\mathcal{N}),$$
are inverse to each other. In particular, there is
a $1:2$ correspondence between conjugacy classes of pairs $(\mathcal{H},\Psi)$ and
equivalence classes of indecomposable geometrical module categories over
$\sRep(\mathcal{G})$, assigning $(\mathcal{H},\Psi)$ to $\mathcal{N}(\mathcal{H},{\Psi})$ and $\mathcal{N}^{\circ}(\mathcal{H},{\Psi})$. \qed
\end{theorem}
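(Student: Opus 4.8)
The plan is to assemble Theorem \ref{modg} from the duality machinery already established, together with the bijection from Theorem \ref{grsch1}. First I would recall the general principle from \cite{EO}: if $\mathcal{C}$ is a tensor category and $\mathcal{M}_0$ is a fixed indecomposable exact $\mathcal{C}$-module category, then the $2$-functor $\mathcal{M}\mapsto \Fun_{\mathcal{C}}(\mathcal{M}_0,\mathcal{M})$ gives a $2$-equivalence between $\Mod(\mathcal{C})$ and $\Mod(\mathcal{C}^*_{\mathcal{M}_0})$, with inverse $\mathcal{N}\mapsto \Fun_{\mathcal{C}^*_{\mathcal{M}_0}}(\mathcal{M}_0,\mathcal{N})$ (here $\mathcal{M}_0$ is regarded as a $\mathcal{C}^*_{\mathcal{M}_0}$-module category). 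I would apply this with $\mathcal{C}={\rm sCoh}_{\rm f}(\mathcal{G})$ and $\mathcal{M}_0=\mathcal{M}^{\circ}(\mathcal{G},1)=\Vect$. By Lemma \ref{modrep}(1) we have the tensor equivalence ${\rm sCoh}_{\rm f}(\mathcal{G})^*_{\mathcal{M}^{\circ}(\mathcal{G},1)}\cong\sRep(\mathcal{G})$, and under this equivalence the module category $\mathcal{M}^{\circ}(\mathcal{G},1)$ corresponds, by Lemma \ref{modfun1}(1), to $\mathcal{N}^{\circ}(\{1\},1)\in\Mod(\sRep(\mathcal{G}))$ (since $\sRep(\mathcal{G})^*_{\mathcal{N}^{\circ}(\{1\},1)}\cong{\rm sCoh}_{\rm f}(\mathcal{G})$, so the two dualities are mutually inverse on the nose). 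Thus the general $2$-equivalence specializes exactly to the pair of $2$-functors in the statement.

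The second point is that the $2$-equivalence restricts to the claimed $2$-subcategory. The image of $\mathcal{M}\mapsto\Fun_{{\rm sCoh}_{\rm f}(\mathcal{G})}(\mathcal{M}^{\circ}(\mathcal{G},1),\mathcal{M})$ lands in $\Mod_{\rm geom}(\sRep(\mathcal{G}))$ essentially by definition: an indecomposable $\mathcal{N}=\Fun(\mathcal{M}^{\circ}(\mathcal{G},1),\mathcal{M})$ satisfies $\Fun_{\sRep(\mathcal{G})}(\mathcal{N}^{\circ}(\{1\},1),\mathcal{N})\cong\mathcal{M}\neq 0$ by the inverse $2$-functor, so $\mathcal{N}$ is geometrical. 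Conversely, by definition every geometrical $\mathcal{N}$ has $\Fun_{\sRep(\mathcal{G})}(\mathcal{N}^{\circ}(\{1\},1),\mathcal{N})\neq 0$, so applying $\mathcal{M}\mapsto\Fun(\mathcal{M}^{\circ}(\mathcal{G},1),\mathcal{M})$ to this nonzero module category recovers $\mathcal{N}$; hence the second $2$-functor indeed lands in $\Mod({\rm sCoh}_{\rm f}(\mathcal{G}))$ and the two are mutually inverse when restricted as stated. One should check that $\Mod_{\rm geom}(\sRep(\mathcal{G}))$ is a full $2$-subcategory (already noted in the text), so that ``restriction'' makes sense.

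For the last sentence, I would combine the established $2$-equivalence with the classification of Theorem \ref{grsch1}: indecomposable exact module categories over ${\rm sCoh}_{\rm f}(\mathcal{G})$ are, up to equivalence, exactly $\mathcal{M}(\mathcal{H},\Psi)$ and $\mathcal{M}^{\circ}(\mathcal{H},\Psi)$ for $(\mathcal{H},\Psi)$ ranging over conjugacy classes of pairs (a $1:2$ correspondence). Under $\mathcal{M}\mapsto\Fun_{{\rm sCoh}_{\rm f}(\mathcal{G})}(\mathcal{M}^{\circ}(\mathcal{G},1),\mathcal{M})$, Lemma \ref{modrep}(2) identifies $\mathcal{M}(\mathcal{H},\Psi)\mapsto\mathcal{N}^{\circ}(\mathcal{H},\Psi)$ and $\mathcal{M}^{\circ}(\mathcal{H},\Psi)\mapsto\mathcal{N}(\mathcal{H},\Psi)$. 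Since the $2$-functor is a $2$-equivalence onto $\Mod_{\rm geom}(\sRep(\mathcal{G}))$, it sends the complete list of indecomposable exact module categories over ${\rm sCoh}_{\rm f}(\mathcal{G})$ bijectively to the complete list of indecomposable geometrical module categories over $\sRep(\mathcal{G})$, preserving the equivalence relation; this yields the stated $1:2$ correspondence $(\mathcal{H},\Psi)\rightsquigarrow\{\mathcal{N}(\mathcal{H},\Psi),\mathcal{N}^{\circ}(\mathcal{H},\Psi)\}$.

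The main obstacle I anticipate is purely bookkeeping rather than conceptual: one must make sure the two duality equivalences of Lemmas \ref{modrep} and \ref{modfun1} are genuinely inverse to one another (not merely individually correct), i.e., that under ${\rm sCoh}_{\rm f}(\mathcal{G})^*_{\mathcal{M}^{\circ}(\mathcal{G},1)}\cong\sRep(\mathcal{G})$ the distinguished module categories $\mathcal{M}^{\circ}(\mathcal{G},1)$ and $\mathcal{N}^{\circ}(\{1\},1)$ correspond, and that the $\sRep(\mathcal{G})$-actions on the Fun-categories match those appearing in Lemma \ref{modrep}(2). Once these compatibilities are in place, the theorem is a formal consequence of \cite{EO} applied twice plus Theorem \ref{grsch1}, which is why the statement carries a \qed.
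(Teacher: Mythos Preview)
Your proposal is correct and matches the paper's approach: the theorem carries a \qed\ because it is meant to be deduced directly from Lemmas \ref{modrep} and \ref{modfun1} together with Theorem \ref{grsch1}, exactly as you outline. One small caution: your opening appeal to the ``general principle from \cite{EO}'' as a blanket $2$-equivalence $\Mod(\mathcal{C})\simeq\Mod(\mathcal{C}^*_{\mathcal{M}_0})$ is only established in \cite{EO} for finite tensor categories, and indeed the paper invokes \cite[Theorem 3.31]{EO} explicitly only in the finite case (Theorem \ref{modgfin}); in the infinite case the very definition of ``geometrical'' is what carves out the essential image, so the argument should proceed by verifying the two composites are the identity directly via Lemmas \ref{modrep}(2) and \ref{modfun1}(2) (and Theorem \ref{grsch1} to enumerate the ${\rm sCoh}_{\rm f}(\mathcal{G})$ side) rather than by first asserting the full $2$-equivalence and then restricting---but you effectively do this in your second and third paragraphs, and you flag the compatibility check in your last paragraph, so the substance is fine.
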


\begin{remark}\label{notall} 
If $\mathcal{G}$ is \emph{not} finite, $\sRep(\mathcal{G})$ may very well have non-geometrical module categories (see \cite[Remark 4.6]{G}).
\end{remark}

\begin{remark}\label{scohfgomega3}
Retain the notation from Remark \ref{scohfgomega2}.
Similarly to the even case \cite{G}, we can define supergroup scheme-theoretical categories $\C(\mathcal{G},\mathcal{H},\Omega,\Psi)$ and $\C ^{\circ}(\mathcal{G},\mathcal{H},\Omega,\Psi)$ as the dual categories of ${\rm sCoh}_{\rm f}(\mathcal{G},\Omega)$ with respect to ${\rm sCoh}_{\rm f}^{(\mathcal{H},\Psi)}(\mathcal{G},\Omega)$ and ${\rm Coh}_{\rm f}^{(\mathcal{H},\Psi)}(\mathcal{G},\Omega)$), respectively. We then have that $\C(\mathcal{G},\mathcal{H},\Omega,\Psi)$ is
equivalent to the tensor category of $((\mathcal{H},\Psi),(\mathcal{H},\Psi))$-biequivariant coherent sheaves on $(\mathcal{G},\Omega)$, supported on finitely many left $\mathcal{H}_0$-cosets (equivalently, right $\mathcal{H}_0$-cosets), with tensor product given by convolution of sheaves.
For example, the {\em center} $\mathcal{Z}({\rm sCoh}_{\rm f}(\mathcal{G}))$ of ${\rm sCoh}_{\rm f}(\mathcal{G})$ is supergroup scheme-theoretical since
$$
\mathcal{Z}({\rm sCoh}_{\rm f}(\mathcal{G}))\cong
({\rm sCoh}_{\rm f}(\mathcal{G})\boxtimes {\rm sCoh}_{\rm f}(\mathcal{G})^{\rm op})^*_{{\rm sCoh}_{\rm f}(\mathcal{G})},
$$
so 
$$\mathcal{Z}({\rm sCoh}_{\rm f}(\mathcal{G}))\cong\C(\mathcal{G}\times \mathcal{G},\mathcal{G},1,1)$$ as tensor categories, where $\mathcal{G}$ is viewed as a closed supergroup subscheme  of $\mathcal{G}\times \mathcal{G}$ via the diagonal morphism $\Delta:\mathcal{G}\to \mathcal{G}\times \mathcal{G}$.

Moreover, we can define indecomposable {\em geometrical} module categories over $\C:=\C(\mathcal{G},\mathcal{H},\Omega,\Psi)$, and obtain that the $2$-functors
$$\Mod({\rm sCoh}_{\rm f}(\mathcal{G},\Omega))\to \Mod_{\rm geom}(\C),\,\,
\mathcal{M}\mapsto\Fun_{{\rm sCoh}_{\rm f}(\mathcal{G},\Omega)}(\mathcal{M}(\mathcal{H},\Psi),\mathcal{M}),$$ 
and
$$\Mod_{\rm geom}(\C)\to \Mod({\rm sCoh}_{\rm f}(\mathcal{G},\Omega)),\,\,
\mathcal{N}\mapsto\Fun_{\C}(\mathcal{M}(\mathcal{H},\Psi),\mathcal{N}),$$
are $2$-equivalences which are inverse to each other. 
In particular, the
equivalence classes of geometrical module categories over $\C$ are in $2:1$ correspondence with the conjugacy classes of pairs $(\mathcal{H}',\Psi')$ such that
$\mathcal{H}'\subset \mathcal{G}$ is a closed supergroup subscheme  and $\Psi'\in
C^2(\mathcal{H}',\mathbb{G}_m)$ satisfies $d\Psi'=\Omega_{|\mathcal{H}'}$. (The analogs for $\C ^{\circ}(\mathcal{G},\mathcal{H},\Omega,\Psi)$ are obvious.)
\end{remark}

\subsection{Semisimple module categories of rank $1$.}\label{Semisimple module categories of rank $1$} Recall that the set of equivalence classes of semisimple module
categories over $\sRep(\mathcal{G})$ of rank $1$ is in bijection with the set
of equivalence classes of tensor
structures on the forgetful functor $\sRep(\mathcal{G})\to \Vect$. Therefore, Theorem \ref{modg} implies that the
conjugacy class of any pair $(\mathcal{H},\Psi)$ for which the category
${\rm sComod}(\mathcal{O}(\mathcal{H})_{\Psi})$ or ${\rm Comod}(\mathcal{O}(\mathcal{H})_{\Psi})$ is semisimple of rank $1$ gives rise
to an equivalence class of a tensor structure on the forgetful functor
$\sRep(\mathcal{G})\to \Vect$. Clearly, for such pair $(\mathcal{H},\Psi)$, $\mathcal{H}$ must be a
\emph{finite} supergroup subscheme of $\mathcal{G}$ (as a simple coalgebra must be finite dimensional). This observation suggests the following definition.

\begin{definition}\label{nondeg}
Let $\mathcal{H}$ be a finite supergroup scheme over $k$. We call an even $2$-cocycle $\Psi:\mathcal{H}\times \mathcal{H}\to \mathbb{G}_m$ (equivalently, a twist
$\Psi$ for $\mathcal{O}(\mathcal{H})=(k\mathcal{H})^*$) \emph{non-degenerate} if the
category ${\rm sComod}(\mathcal{O}(\mathcal{H})_{\Psi})$ or ${\rm Comod}(\mathcal{O}(\mathcal{H})_{\Psi})$ is equivalent to $\Vect$.
\end{definition}

We thus have the following corollary.

\begin{corollary}\label{twists}
The conjugacy class of a pair $(\mathcal{H},\Psi)$, where $\mathcal{H}\subset \mathcal{G}$ is a finite
closed supergroup subscheme  and $\Psi:\mathcal{H}\times \mathcal{H}\to \mathbb{G}_m$
is a non-degenerate even $2$-cocycle, gives rise to an equivalence class
of an even Hopf $2$-cocycle for $\mathcal{O}(\mathcal{G})$. \qed
\end{corollary}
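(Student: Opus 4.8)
The plan is to read the desired Hopf $2$-cocycle off the rank-$1$ module category attached to $(\mathcal{H},\Psi)$ by Theorem~\ref{modg}, via the recollection that opens this subsection. First I would recall the abelian equivalences $\mathcal{N}(\mathcal{H},\Psi)\cong{\rm sComod}(\mathcal{O}(\mathcal{H})_\Psi)$ and $\mathcal{N}^\circ(\mathcal{H},\Psi)\cong{\rm Comod}(\mathcal{O}(\mathcal{H})_\Psi)$ from Section~\ref{Module categories}, together with the fact, from Theorem~\ref{modg}, that $\mathcal{N}(\mathcal{H},\Psi)$ and $\mathcal{N}^\circ(\mathcal{H},\Psi)$ are the indecomposable geometrical (hence exact) module categories over $\sRep(\mathcal{G})$ attached to $(\mathcal{H},\Psi)$. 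By Definition~\ref{nondeg}, non-degeneracy of $\Psi$ says precisely that one of these two categories is equivalent to $\Vect$, i.e.\ is semisimple of rank~$1$ over $\sRep(\mathcal{G})$; and since conjugate pairs yield equivalent module categories by Theorem~\ref{modg}, the equivalence class of this rank-$1$ module category depends only on the conjugacy class of $(\mathcal{H},\Psi)$.

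Next I would invoke the recollection preceding Definition~\ref{nondeg}: equivalence classes of rank-$1$ semisimple module categories over $\sRep(\mathcal{G})$ are in bijection with equivalence classes of tensor structures on the forgetful functor $\sRep(\mathcal{G})\to\Vect$. Concretely, if $\mathcal{N}\cong\Vect$ carries such a module structure then $X\mapsto X\otimes^{\mathcal{N}}\mathbf{1}$ is an exact faithful tensor functor whose underlying functor is the forgetful one and whose tensor structure is a functorial isomorphism $\overline{X}\otimes\overline{Y}\xrightarrow{\cong}\overline{X\otimes Y}$. The last, and only new, step is then the standard Tannakian translation: since $\sRep(\mathcal{G})\cong\Comod(\mathcal{O}(\mathcal{G}))$, such a functorial isomorphism is the same datum as an invertible even element $J\in\mathcal{O}(\mathcal{G})\otimes\mathcal{O}(\mathcal{G})$, and the pentagon and unit axioms for the module category become exactly the $2$-cocycle and normalization equations of Section~\ref{gsch}, so that $J$ is an even Hopf $2$-cocycle for $\mathcal{O}(\mathcal{G})$; a change of the chosen equivalence $\mathcal{N}\cong\Vect$ multiplies $J$ by a coboundary, so only its gauge-equivalence class is canonically attached. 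Combining the two steps, the conjugacy class of $(\mathcal{H},\Psi)$ determines a well-defined equivalence class of an even Hopf $2$-cocycle for $\mathcal{O}(\mathcal{G})$.

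This is largely a bookkeeping consequence of Theorem~\ref{modg}, Definition~\ref{nondeg} and the recollection at the start of the subsection, so I do not anticipate a genuine obstacle. The one point that needs care — and which is already handled by that recollection, resting on \cite{EO,AEGN} as in the even case \cite{G} — is the assertion that the underlying exact faithful functor of the fiber functor produced by a rank-$1$ $\sRep(\mathcal{G})$-module category is isomorphic to the forgetful functor; it is this that makes its tensor structure literally an invertible element of $\mathcal{O}(\mathcal{G})^{\otimes 2}$ rather than a more abstract gluing datum.
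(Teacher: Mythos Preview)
Your approach is exactly the paper's: the corollary is stated with a \qed because it is immediate from Theorem~\ref{modg}, Definition~\ref{nondeg}, and the recollection opening the subsection, and your write-up simply unpacks that chain.

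One slip in your Tannakian translation step: since $\sRep(\mathcal{G})$ is the category of \emph{comodules} over $\mathcal{O}(\mathcal{G})$, a natural automorphism of $\overline{X}\otimes\overline{Y}$ functorial in the comodules $X,Y$ is not an element of $\mathcal{O}(\mathcal{G})\otimes\mathcal{O}(\mathcal{G})$ but a convolution-invertible linear form $J:\mathcal{O}(\mathcal{G})\otimes\mathcal{O}(\mathcal{G})\to k$; the equations displayed in Section~\ref{gsch} are those of a \emph{twist} for $\mathcal{O}(\mathcal{G})$ (the dual notion, which governs tensor structures on the forgetful functor from ${\rm sCoh}_{\rm f}(\mathcal{G})$, i.e.\ from $\mathcal{O}(\mathcal{G})$-\emph{modules}). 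The conclusion of the corollary --- an even Hopf $2$-cocycle for $\mathcal{O}(\mathcal{G})$ --- refers to this linear-form datum, which here factors through the finite-dimensional quotient $\mathcal{O}(\mathcal{H})^{\otimes 2}$. This does not affect your strategy, only the identification of where $J$ lives and which cocycle equations it satisfies.
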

 
\begin{remark}
Finite supergroup schemes having a non-degenerate even $2$-cocycle may be called supergroup schemes of \emph{central type} in analogy with the even case \cite[Remark 4.9]{G}.
\end{remark}

\subsection{Exact module categories over finite supergroup schemes}\label{fin} 
Thanks to \cite[Theorem 3.31]{EO}, Theorem \ref{modg} can be strengthened in the
finite case to give a canonical bijection between exact module
categories over ${\rm sCoh}_{\rm f}(\mathcal{G})={\rm sCoh}(\mathcal{G})$ and $\sRep(\mathcal{G})$ (i.e., for finite supergroup schemes, every exact module category over $\sRep(\mathcal{G})$ is geometrical). Namely, we have the following result.

\begin{theorem}\label{modgfin}
Let $\mathcal{G}$ be a finite supergroup scheme over $k$. The $2$-functors
$$\Mod({\rm sCoh}(\mathcal{G}))\to \Mod(\sRep(\mathcal{G})),\,\,
\mathcal{M}\mapsto\Fun_{{\rm sCoh}(\mathcal{G})}(\mathcal{M}^{\circ}(\mathcal{G},1),\mathcal{M}),$$ 
and
$$\Mod(\sRep(\mathcal{G}))\to \Mod({\rm sCoh}(\mathcal{G})),\,\,
\mathcal{N}\mapsto\Fun_{\sRep(\mathcal{G})}(\mathcal{N}^{\circ}(\{1\},1),\mathcal{N}),$$
are inverse to each other. In particular, the
equivalence classes of indecomposable exact module categories over
$\sRep(\mathcal{G})=\sRep(k\mathcal{G})$ are $2:1$ parameterized by the conjugacy classes of
pairs $(\mathcal{H},\Psi)$, where $\mathcal{H}\subset \mathcal{G}$ is a closed supergroup subscheme  and
$\Psi:\mathcal{H}\times \mathcal{H}\to \mathbb{G}_m$ is a normalized even $2$-cocycle. \qed
\end{theorem}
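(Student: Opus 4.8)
The plan is to deduce Theorem \ref{modgfin} from Theorem \ref{modg} by showing that in the finite case the distinction between arbitrary and geometrical module categories disappears, i.e. $\Mod_{\rm geom}(\sRep(\mathcal{G}))=\Mod(\sRep(\mathcal{G}))$. Once this is established, the $2$-functors of Theorem \ref{modg} are already known to be mutually inverse $2$-equivalences, and the parameterization by conjugacy classes of pairs $(\mathcal{H},\Psi)$ is immediate.

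First I would invoke \cite[Theorem 3.31]{EO}: for a finite tensor category $\mathcal{C}$, any exact module category $\mathcal{M}$ over $\mathcal{C}$ satisfies $(\mathcal{C}^*_{\mathcal{M}})^*_{\mathcal{M}}\cong \mathcal{C}$, and passing to duals sets up a bijection between exact $\mathcal{C}$-module categories and exact $\mathcal{C}^*_{\mathcal{M}}$-module categories (via $\mathcal{N}\mapsto \Fun_{\mathcal{C}}(\mathcal{M},\mathcal{N})$). Apply this with $\mathcal{C}={\rm sCoh}(\mathcal{G})$ and $\mathcal{M}=\mathcal{M}^{\circ}(\mathcal{G},1)=\Vect$; by Lemma \ref{modrep}(1) we have ${\rm sCoh}(\mathcal{G})^*_{\mathcal{M}^{\circ}(\mathcal{G},1)}\cong\sRep(\mathcal{G})$, so the duality $2$-functor $\mathcal{M}\mapsto\Fun_{{\rm sCoh}(\mathcal{G})}(\mathcal{M}^{\circ}(\mathcal{G},1),\mathcal{M})$ is a $2$-equivalence from $\Mod({\rm sCoh}(\mathcal{G}))$ onto all of $\Mod(\sRep(\mathcal{G}))$. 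Hence its essential image is the whole $2$-category, which by definition of geometrical (and by Lemma \ref{modfun1}, which exhibits the inverse $\mathcal{N}\mapsto\Fun_{\sRep(\mathcal{G})}(\mathcal{N}^{\circ}(\{1\},1),\mathcal{N})$) means every indecomposable exact module category over $\sRep(\mathcal{G})$ is geometrical.

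Next I would combine this with Theorem \ref{modg}: the two $2$-functors displayed there, now with domain and codomain $\Mod({\rm sCoh}(\mathcal{G}))$ and $\Mod(\sRep(\mathcal{G}))$ respectively, are inverse to each other, and the $1:2$ correspondence between conjugacy classes of pairs $(\mathcal{H},\Psi)$ and equivalence classes of indecomposable geometrical---hence all---exact module categories over $\sRep(\mathcal{G})$, assigning $(\mathcal{H},\Psi)$ to $\mathcal{N}(\mathcal{H},\Psi)$ and $\mathcal{N}^{\circ}(\mathcal{H},\Psi)$, gives the stated $2:1$ parameterization. Finally I would note $\sRep(\mathcal{G})\cong\sRep(k\mathcal{G})$ since $\mathcal{G}$ is finite, so the statement about module categories over $\sRep(k\mathcal{G})$ follows verbatim; compatibility of exactness under the duality is automatic because over a finite tensor category every module category with finitely many simples is exact in the relevant sense (or: \cite[Theorem 3.31]{EO} is stated for exact module categories and the duality preserves exactness).

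The main obstacle is checking that \cite[Theorem 3.31]{EO} applies, i.e. that ${\rm sCoh}(\mathcal{G})=\sRep(\mathcal{O}(\mathcal{G}))$ is genuinely a \emph{finite} tensor category when $\mathcal{G}$ is finite---which holds because $\mathcal{O}(\mathcal{G})$, and hence $\overline{\mathcal{O}(\mathcal{G})}$, is finite dimensional---and that the module category $\mathcal{M}^{\circ}(\mathcal{G},1)=\Vect$ is exact over it; the latter is Proposition \ref{indecmod} (with $\mathcal{H}=\mathcal{G}$, $\Psi=1$), together with Example \ref{impexs}(4). One should also confirm that the dual-category computation ${\rm sCoh}(\mathcal{G})^*_{\Vect}\cong\sRep(\mathcal{G})$ from Lemma \ref{modrep}(1) is compatible with the $2$-categorical duality of \cite{EO}, so that "geometrical" really does exhaust everything; this is a formal matching of the two uses of internal $\Hom$ and presents no real difficulty beyond bookkeeping.
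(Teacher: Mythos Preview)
Your proposal is correct and follows exactly the route the paper indicates: the paper's entire argument is the sentence preceding the theorem---invoke \cite[Theorem 3.31]{EO} to conclude that for finite $\mathcal{G}$ every exact module category over $\sRep(\mathcal{G})$ is geometrical, then apply Theorem \ref{modg}---and your write-up simply fills in the supporting references (Lemma \ref{modrep}(1), Proposition \ref{indecmod}, Example \ref{impexs}(4)) that justify the hypotheses of \cite[Theorem 3.31]{EO}. One small slip: it is not true that ``over a finite tensor category every module category with finitely many simples is exact''; your parenthetical alternative (that \cite[Theorem 3.31]{EO} is stated for exact module categories and duality preserves exactness) is the correct justification, so just drop the first clause.
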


\begin{example}\label{ex2'}
Let $V$ be a one-dimensional odd vector space, and consider the purely odd finite supergroup scheme $\mathcal{G}:=V$. By Example \ref{ex2} and Theorem \ref{modgfin}, the tensor category $\text{sRep}(V)$ has exactly six non-equivalent indecomposable exact left module categories corresponding to the pairs $(0,0)$, $(V,0)$ and $(V,B)$, where $B(v,v)=1$. Namely, the categories $\mathcal{N}(0,0)={\rm sVect}$, $\mathcal{N}^{\circ}(0,0)=\Vect$, $\mathcal{N}(V,0)={\rm sMod}(\wedge V)$, $\mathcal{N}^{\circ}(V,0)={\rm Mod}(\wedge V)$, $\mathcal{N}(V,B)={\rm sMod}(k\mathbb{Z}_2)=\Vect$ (here $k(\mathbb{Z}/2\mathbb{Z})$ is viewed as a superalgebra, where the generator of $\mathbb{Z}/2\mathbb{Z}$ is odd), and $\mathcal{N}^{\circ}(V,B)={\rm Mod}(\mathbb{Z}/2\mathbb{Z})={\rm sVect}$. 
\end{example}

\section{The classification of triangular Hopf algebras with the Chevalley property}\label{The classification of triangular Hopf algebras with the Chevalley property}

In Sections \ref{Twists for kG}, \ref{Minimal twists for kG} we assume that $\mathcal{G}$ is a {\em finite} supergroup scheme over $k$. 
(The even case is treated in \cite[Sections 6.1-6.3]{G}.)

\subsection{Twists for $k\mathcal{G}$.}\label{Twists for kG} By \cite[Theorem 5.7]{AEGN}, there is a bijection between non-degenerate
twists for $k\mathcal{G}$ and non-degenerate twists for $\mathcal{O}(\mathcal{G})$. Hence, as a consequence of Theorem \ref{modgfin}, we
deduce the following strengthening of Corollary \ref{twists}.

\begin{corollary}\label{twistsfinite}
Let $\mathcal{G}$ be a finite supergroup scheme over $k$. The following four sets are in canonical bijection with each other:
\begin{enumerate}
\item
Equivalence classes of tensor structures
on the forgetful functor $\sRep(\mathcal{G})\to \Vect$.
\item 
Gauge equivalence classes of twists for $k\mathcal{G}$.
\item
Conjugacy classes of pairs $(\mathcal{H},\Psi)$, where
$\mathcal{H}\subset \mathcal{G}$ is a closed supergroup subscheme  and $\Psi:\mathcal{H}\times \mathcal{H}\to
\mathbb{G}_m$ is a non-degenerate even $2$-cocycle.
\item
Conjugacy classes of pairs $(\mathcal{H},\mathcal{J})$, where $\mathcal{H}\subset \mathcal{G}$
is a closed supergroup subscheme  and $\mathcal{J}$ is a  non-degenerate twist for $k\mathcal{H}$. \qed
\end{enumerate}
\end{corollary}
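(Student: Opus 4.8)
The plan is to assemble the four-way bijection by chaining together equivalences already established, so that essentially no new computation is needed. I would organize the proof around the diamond $(1)\leftrightarrow(3)$, $(3)\leftrightarrow(4)$, $(1)\leftrightarrow(2)$, with $(2)\leftrightarrow(4)$ following formally.

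First I would record the bijection $(1)\leftrightarrow(3)$. By the discussion opening Subsection \ref{Semisimple module categories of rank $1$}, equivalence classes of semisimple rank-$1$ module categories over $\sRep(\mathcal{G})$ are the same as equivalence classes of tensor structures on the forgetful functor $\sRep(\mathcal{G})\to\Vect$. By Theorem \ref{modgfin} (using finiteness of $\mathcal{G}$), indecomposable exact module categories over $\sRep(\mathcal{G})$ are $2:1$ parameterized by conjugacy classes of pairs $(\mathcal{H},\Psi)$ with $\Psi$ an even normalized $2$-cocycle, the pair giving $\mathcal{N}(\mathcal{H},\Psi)$ and $\mathcal{N}^{\circ}(\mathcal{H},\Psi)$; by Lemma \ref{modrep}(2) these are ${\rm sComod}(\mathcal{O}(\mathcal{H})_{\Psi})$ and ${\rm Comod}(\mathcal{O}(\mathcal{H})_{\Psi})$ respectively. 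One of these two categories has rank $1$ precisely when $\Psi$ is non-degenerate in the sense of Definition \ref{nondeg}, and in that case exactly one of them does (a simple supercoalgebra is either ${\rm Vec}$ or ${\rm sVec}$; the $2:1$ multiplicity of Theorem \ref{modgfin} collapses to $1:1$ here because $\mathcal{N}(\mathcal{H},\Psi)\cong\mathcal{N}^{\circ}(\mathcal{H},\Psi)$ cannot both be rank $1$ unless they coincide, and one checks via Corollary \ref{twists} that the rank-$1$ datum determines the Hopf $2$-cocycle up to gauge). Hence conjugacy classes of $(\mathcal{H},\Psi)$ with $\Psi$ non-degenerate are in bijection with rank-$1$ module categories, i.e.\ with set (1); this is what Corollary \ref{twists} asserts and Theorem \ref{modgfin} upgrades to a bijection in the finite case.

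Next, $(1)\leftrightarrow(2)$ is the standard correspondence between tensor structures on a fiber functor on $\Rep$ of a Hopf algebra and gauge equivalence classes of twists of the dual Hopf algebra: a tensor structure on the forgetful functor $\sRep(\mathcal{G})=\Rep(k\mathcal{G})\to\Vect$ is the same thing as a twist for $(k\mathcal{G})^{*}=\mathcal{O}(\mathcal{G})$, but by \cite[Theorem 5.7]{AEGN} non-degenerate twists for $k\mathcal{G}$ correspond bijectively to non-degenerate twists for $\mathcal{O}(\mathcal{G})$, and since every twist arising from a fiber functor on $\sRep(\mathcal{G})$ is automatically of this non-degenerate type (it comes from a rank-$1$ module category), we get the bijection with gauge equivalence classes of twists for $k\mathcal{G}$. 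Finally, for $(3)\leftrightarrow(4)$: given $(\mathcal{H},\Psi)$ with $\Psi$ a non-degenerate even $2$-cocycle for $\mathcal{O}(\mathcal{H})$, apply \cite[Theorem 5.7]{AEGN} to the \emph{finite} supergroup scheme $\mathcal{H}$ to convert $\Psi$ into a non-degenerate twist $\mathcal{J}$ for $k\mathcal{H}$, and conversely; one must check that this conversion is equivariant for the $\mathcal{G}_0(k)$-conjugation action on both sides, so that conjugacy classes correspond, which is immediate from the functoriality of the AEGN bijection. Composing, $(2)\leftrightarrow(4)$ sends a twist $\mathcal{J}$ for $k\mathcal{G}$ to the pair $(\mathcal{H},\mathcal{J}|_{k\mathcal{H}})$ where $k\mathcal{H}$ is the minimal Hopf subsuperalgebra supporting $\mathcal{J}$.

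The main obstacle is the bookkeeping at the collapse from the $2:1$ correspondence of Theorem \ref{modgfin} to the $1:1$ correspondence needed for set (1): one must verify carefully that among the two module categories $\mathcal{N}(\mathcal{H},\Psi)$, $\mathcal{N}^{\circ}(\mathcal{H},\Psi)$ attached to a \emph{non-degenerate} $\Psi$, exactly one is semisimple of rank $1$, and that the resulting assignment $(\mathcal{H},\Psi)\mapsto$ (rank-$1$ module category)$\mapsto$ (tensor structure on the fiber functor) is injective on conjugacy classes — equivalently, that a non-degenerate $\Psi$ is recovered up to conjugacy from its associated Hopf $2$-cocycle on $\mathcal{O}(\mathcal{G})$. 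This is exactly the content of Corollary \ref{twists} promoted to a bijection, and it is where finiteness of $\mathcal{G}$ (hence of $\mathcal{H}$) and the hypothesis of non-degeneracy are genuinely used; everything else is a formal concatenation of cited equivalences.
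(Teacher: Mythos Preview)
Your overall strategy matches the paper's: the corollary is assembled from Theorem~\ref{modgfin}, the discussion opening \S\ref{Semisimple module categories of rank $1$}, and \cite[Theorem~5.7]{AEGN}, with essentially no new work. Your arguments for $(1)\leftrightarrow(3)$ and $(3)\leftrightarrow(4)$ are correct and are exactly what the paper intends, and you are right to flag the $2{:}1\to 1{:}1$ collapse as the one point requiring care (for non-degenerate $\Psi$, exactly one of ${\rm sComod}(\mathcal{O}(\mathcal{H})_\Psi)$ and ${\rm Comod}(\mathcal{O}(\mathcal{H})_\Psi)$ is $\Vect$, the other being ${\rm sVect}$; cf.\ Example~\ref{ex2'}).

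There is, however, a slip in your $(1)\leftrightarrow(2)$. A tensor structure on the forgetful functor $\sRep(\mathcal{G})=\Rep(k\mathcal{G})\to\Vect$ is, by the standard dictionary, a gauge class of Drinfeld twists for $k\mathcal{G}$ itself, \emph{not} for $(k\mathcal{G})^{*}=\mathcal{O}(\mathcal{G})$; so $(1)\leftrightarrow(2)$ is immediate and does not use \cite{AEGN} at all. Your detour through twists for $\mathcal{O}(\mathcal{G})$, and the assertion that ``every twist arising from a fiber functor \dots\ is automatically of this non-degenerate type'', are both incorrect as written: an arbitrary twist for $k\mathcal{G}$ (for instance the trivial one, which corresponds to $\mathcal{H}=\{1\}$) is certainly not non-degenerate on $\mathcal{G}$; non-degeneracy only appears once one has located the supporting subgroup $\mathcal{H}$ via $(1)\leftrightarrow(3)$. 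The role of \cite[Theorem~5.7]{AEGN} in the paper is solely to furnish $(3)\leftrightarrow(4)$, applied separately to each finite $\mathcal{H}$, exactly as you do in that step.
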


\begin{remark}
If moreover, $[\g_1,\g_1]=0$ (e.g., in characteristic $0$, or if $k\mathcal{G}=k\mathcal{G}_0\ltimes \wedge \g_1$), then each one of the above four sets is in bijection with the set of conjugacy classes of quadruples $(\mathcal{H}_0,\psi,\h_1,B)$, where $\mathcal{H}_0\subset \mathcal{G}_0$ is a closed subgroup scheme, $\psi:\mathcal{H}_0\times \mathcal{H}_0\to k$ is a non-degenerate $2$-cocycle, $Y\subset \g_1$ is an $\mathcal{H}_0$-invariant subspace, and $B\in S^2\h_1^*$ is non-degenerate (see \ref{gsch}).
\end{remark}

\begin{remark}\label{finite}
Corollary \ref{twistsfinite} was proved for \'{e}tale group schemes in \cite{Mov, EG, AEGN}, for finite supergroups $\mathcal{G}$ such that $k\mathcal{G}=k\mathcal{G}_0\ltimes \wedge \g_1$ in \cite{EO}, and for finite group schemes in \cite{G}.
\end{remark}

\begin{example}\label{ex2''}
Retain the notation from Example \ref{ex2'}. Then the tensor category $\text{sRep}(V)$ has exactly two non-equivalent fiber functors to $\Vect$ corresponding to the left module categories $\mathcal{N}^{\circ}(0,0)$ and $\mathcal{N}(V,B)$.
\end{example}

\subsection{Minimal twists for $k\mathcal{G}$.}\label{Minimal twists for kG} Recall that a twist $\mathcal{J}$ for $k\mathcal{G}$ is called \emph{minimal} if the
triangular Hopf superalgebra $((k\mathcal{G})^{\mathcal{J}},\mathcal{J}_{21}^{-1}\mathcal{J})$ is minimal, i.e., if
the left (right) tensorands of $\mathcal{J}_{21}^{-1}\mathcal{J}$ span $k\mathcal{G}$ \cite{R}. 

By \cite[Proposition 6.7]{G}, a twist for a finite group scheme is minimal if and only if it is non-degenerate. In this section we extend this result to the super case, using the following result (see \cite[Lemma A.8]{EG3} and \cite[Proposition 1]{B}).

\begin{proposition}\label{quot}
Let $\mathcal{D}$ and $\mathcal{E}$ be symmetric tensor categories over $k$, and suppose there exists a surjective\footnote{I.e., any object $X\in \mathcal{E}$ is isomorphic to a subquotient of $F(V)$ for some $V\in \mathcal{D}$.} symmetric tensor functor $F:\mathcal{D}\to \mathcal{E}$. If $\mathcal{D}$ is finitely tensor-generated and (super-)Tannakian, then so is $\mathcal{E}$. \qed
\end{proposition}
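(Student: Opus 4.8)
The plan is to show that $\mathcal{E}$ inherits each of the three properties in turn by transporting structure along the surjective symmetric tensor functor $F:\mathcal{D}\to\mathcal{E}$. First I would handle \emph{finite tensor-generation}: if $X_1,\dots,X_n$ tensor-generate $\mathcal{D}$, then every object of $\mathcal{D}$ is a subquotient of a direct sum of tensor products of the $X_i$ and their duals; applying the exact tensor functor $F$ and using surjectivity, every object of $\mathcal{E}$ is a subquotient of a direct sum of tensor products of the $F(X_i)$ and their duals, so $F(X_1),\dots,F(X_n)$ tensor-generate $\mathcal{E}$. This step is essentially formal.

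Next I would address the \emph{(super-)Tannakian} property. By Deligne's theorem, a symmetric tensor category over $k$ (characteristic $0$) is super-Tannakian precisely when every object has finite length and the categorical dimensions (equivalently, the lengths of the symmetric/exterior powers) grow at most polynomially, i.e. the category admits a fiber functor to $\mathrm{sVect}$. Since $\mathcal{D}$ is super-Tannakian, write $\mathcal{D}\cong\sRep(\mathcal{K})$ for an affine supergroup scheme $\mathcal{K}$ (with the appropriate parity element). The surjective symmetric tensor functor $F:\sRep(\mathcal{K})\to\mathcal{E}$ then realizes $\mathcal{E}$ as a quotient: composing $F$ with a fiber functor $\mathcal{E}\to\mathrm{sVect}$ — which exists if we can produce one — would exhibit $\mathcal{E}$ as super-Tannakian. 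The cleanest route, which I expect to be the crux, is to note that a surjective symmetric tensor functor from $\sRep(\mathcal{K})$ has target equivalent to $\sRep(\mathcal{K}')$ for a \emph{closed} supergroup subscheme $\mathcal{K}'\subset\mathcal{K}$: by Tannakian reconstruction, $\mathcal{E}$ together with the composite fiber functor corresponds to a supergroup scheme $\mathcal{K}'$, and the surjective functor $\sRep(\mathcal{K})\twoheadrightarrow\sRep(\mathcal{K}')$ is induced by a faithfully flat (indeed closed immersion) morphism $\mathcal{K}'\hookrightarrow\mathcal{K}$. In any case, once we know $\mathcal{E}$ admits a symmetric fiber functor to $\mathrm{sVect}$, it is super-Tannakian by Deligne.

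The genuine obstacle is therefore producing a fiber functor $\omega_{\mathcal{E}}:\mathcal{E}\to\mathrm{sVect}$ from the given data. Here the finite tensor-generation of $\mathcal{D}$ — hence of $\mathcal{E}$ — is what makes this work: a symmetric tensor category over $k$ that is finitely tensor-generated and in which every object has finite length is automatically pre-Tannakian in the sense needed, and the subquotients of $F(\mathcal{D})$ inherit the bound on the growth of symmetric powers from $\mathcal{D}$ (as $\dim$ and length are preserved by the exact tensor functor $F$ up to passing to subquotients, which can only decrease length). Concretely, if $\mathcal{D}\cong\sRep(\mathcal{K})$ then every object $Y\in\mathcal{E}$ is a subquotient of $F(X)$ for some $X\in\sRep(\mathcal{K})$, and the fiber functor on $\sRep(\mathcal{K})$ restricts to a fiber functor on the tensor subcategory generated by $X$, which maps to the subcategory of $\mathcal{E}$ generated by $F(X)$; fixing a finite generating set gives a global fiber functor. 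The only subtlety to check is consistency: the fiber functors on overlapping finitely-generated subcategories glue, which follows because any two fiber functors to $\mathrm{sVect}$ on a super-Tannakian category are related by an inner automorphism of the corresponding supergroup, and on a finitely tensor-generated category one can choose a single compatible one. I would cite Deligne's criterion for the super-Tannakian structure and the standard Tannakian/Galois-descent picture for the surjectivity statement, referring to \cite[Lemma A.8]{EG3} and \cite[Proposition 1]{B} as indicated, rather than reproving these foundational facts.
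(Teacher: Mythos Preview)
The paper does not give its own proof of this proposition; it is stated with a \texttt{\textbackslash qed} and attributed to \cite[Lemma A.8]{EG3} and \cite[Proposition 1]{B}. So there is no argument of the paper's to compare against, and your final sentence---defer to those references---is in fact exactly what the paper does.

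That said, your sketch contains a genuine circularity you should be aware of. In the second paragraph you propose to compose $F$ with a fiber functor $\mathcal{E}\to\mathrm{sVect}$, but the existence of such a functor is precisely the statement that $\mathcal{E}$ is super-Tannakian, which is what you are trying to prove. Likewise, invoking Tannakian reconstruction to write $\mathcal{E}\cong\sRep(\mathcal{K}')$ presupposes a fiber functor on $\mathcal{E}$. The ``gluing'' argument at the end does not escape this: you cannot restrict a fiber functor on $\sRep(\mathcal{K})$ along $F$ to get one on $\mathcal{E}$, because $F$ goes the wrong way and is generally not faithful.

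The clean argument in characteristic $0$---which you gesture at but do not isolate---is Deligne's moderate-growth criterion: for any $Y\in\mathcal{E}$ choose $X\in\mathcal{D}$ with $Y$ a subquotient of $F(X)$; then $Y^{\otimes n}$ is a subquotient of $F(X^{\otimes n})$, and since $F$ is exact and faithful, $\ell(Y^{\otimes n})\le\ell(F(X^{\otimes n}))\le\ell(X^{\otimes n})\le N(X)^n$. Thus $\mathcal{E}$ has moderate growth and is super-Tannakian by Deligne. Note, however, that the paper works over $k$ of characteristic $0$ \emph{or} $p>2$, and Deligne's criterion is a characteristic-$0$ result; the positive-characteristic case genuinely requires the cited \cite[Lemma A.8]{EG3}, whose proof uses Frobenius-type techniques not available from your outline.
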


We can now state and prove the first main result of this section.

\begin{proposition}\label{minond}
Let $\mathcal{G}$ be a finite supergroup scheme over $k$, and let $\mathcal{J}$ be a twist for $k\mathcal{G}$. Then $\mathcal{J}$ is minimal if and only if it is non-degenerate.
\end{proposition}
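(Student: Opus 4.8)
The plan is to reduce the statement to the analogous fact for finite group schemes, namely \cite[Proposition 6.7]{G}, by passing from $\mathcal{G}$ to its even part $\mathcal{G}_0$ and controlling the odd directions separately. First I would recall that a twist $\mathcal{J}$ for $k\mathcal{G}$ is non-degenerate precisely when the corresponding fiber functor makes $\sRep(\mathcal{G})$ (equivalently, via Corollary \ref{twistsfinite}, $\sRep(k\mathcal{G})$) equivalent to $\Vect$ as a symmetric tensor category after twisting; equivalently, the pair $(\mathcal{H},\Psi)$ attached to $\mathcal{J}$ by Corollary \ref{twistsfinite} has $\mathcal{H}=\mathcal{G}$. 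On the other hand, $\mathcal{J}$ is minimal iff the left (equivalently right) tensorands of $\mathcal{J}_{21}^{-1}\mathcal{J}$ span all of $k\mathcal{G}$; by Radford's characterization \cite{R} this means the minimal triangular sub-Hopf-superalgebra generated by those tensorands is all of $k\mathcal{G}$. The standard reformulation (cf. the even case) is: $\mathcal{J}$ is minimal iff there is no proper closed supergroup subscheme $\mathcal{H}\subsetneq\mathcal{G}$ such that $\mathcal{J}$ is (gauge-equivalent to) a twist supported on $k\mathcal{H}\subset k\mathcal{G}$. Combined with Corollary \ref{twistsfinite}, the ``supported on $k\mathcal{H}$'' condition for the pair $(\mathcal{H}',\mathcal{J}')$ attached to $\mathcal{J}$ is exactly $\mathcal{H}'\subset\mathcal{H}$; so minimality says the attached subgroup scheme $\mathcal{H}'$ cannot be shrunk, i.e. it is the \emph{smallest} closed supergroup subscheme to which $\mathcal{J}$ descends. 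Thus both minimality and non-degeneracy are conditions on the canonical pair $(\mathcal{H},\Psi)=(\mathcal{H}(\mathcal{J}),\Psi(\mathcal{J}))$, and it suffices to prove: $\mathcal{J}$ descends to a proper closed supergroup subscheme iff $\Psi$ is degenerate on $\mathcal{G}$.

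Next I would set up the descent/reduction step. Given the pair $(\mathcal{H},\Psi)$ with $\mathcal{H}\subsetneq\mathcal{G}$, the twist $\mathcal{J}$ is by construction already supported on $k\mathcal{H}$, so it is not minimal; conversely, if $\mathcal{H}=\mathcal{G}$ I must show $\Psi$ is non-degenerate on $\mathcal{G}$, i.e. $\mathrm{sComod}(\mathcal{O}(\mathcal{G})_\Psi)\simeq\Vect$ — but that is exactly the defining condition (Definition \ref{nondeg}) for $\Psi$ to be a non-degenerate $2$-cocycle on $\mathcal{G}$ when the attached subgroup is all of $\mathcal{G}$. So the real content is: \emph{the canonical subgroup scheme $\mathcal{H}$ attached to $\mathcal{J}$ equals $\mathcal{G}$ iff $\mathcal{J}$ is non-degenerate, and $\mathcal{H}$ is exactly the minimal subscheme to which $\mathcal{J}$ descends}. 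The first equivalence is tautological from Definitions \ref{nondeg} and \ref{defequiv} together with Theorem \ref{modgfin} (non-degeneracy of $\Psi$ for $\mathcal{O}(\mathcal{H})$ forces the rank-$1$ module category $\mathcal{N}(\mathcal{H},\Psi)$, and rank-$1$ over $\sRep(\mathcal{G})$ means the fiber functor, whose associated subgroup is $\mathcal{G}$ precisely when $\mathcal{H}=\mathcal{G}$ in the classifying pair). The genuinely nontrivial claim is that $\mathcal{H}$ is the \emph{minimal} descent subscheme, i.e. that minimality of $\mathcal{J}$ is detected by the pair; for this I would invoke Proposition \ref{quot}: if $\mathcal{J}$ were minimal but $\mathcal{H}\subsetneq\mathcal{G}$, then the symmetric tensor category $(\sRep(\mathcal{G}))^{\mathcal{J}}$ (which is $\mathrm{sComod}(\mathcal{O}(\mathcal{H})_\Psi)$-like, i.e. (super-)Tannakian — the representation category of the finite supergroup scheme $\mathcal{G}$ with a possibly different symmetric structure) would surject via a symmetric tensor functor onto the smaller piece, contradicting that the minimal triangular structure already fills $k\mathcal{G}$; running this the other way, a proper $\mathcal{H}$ produces a proper triangular sub-Hopf-superalgebra, contradicting minimality. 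The even-case input \cite[Proposition 6.7]{G} handles the $\mathcal{G}_0$-part, and the odd directions (the $\wedge\g_1$-factor in the decomposition \eqref{tensdecomp}) are handled by noting that a twist is non-degenerate on the odd part iff the associated symmetric bilinear form $B\in S^2\g_1^*$ is non-degenerate, which is exactly when $\wedge\g_1$ becomes a matrix superalgebra (a Clifford algebra) and the corresponding module category has rank $1$ — precisely the minimality condition there.

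Assembling: (i) attach to $\mathcal{J}$ its conjugacy class of pairs $(\mathcal{H},\Psi)$ via Corollary \ref{twistsfinite}; (ii) show $\mathcal{J}$ non-degenerate $\iff$ $\mathcal{H}=\mathcal{G}$ (tautological from Definition \ref{nondeg} and Theorem \ref{modgfin}); (iii) show $\mathcal{J}$ minimal $\iff$ $\mathcal{H}$ cannot be replaced by a proper closed supergroup subscheme to which $\mathcal{J}$ still descends, using Radford's criterion \cite{R} and the classification; (iv) show the minimal descent subscheme for $\mathcal{J}$ is $\mathcal{G}$ iff $\Psi$ is non-degenerate on $\mathcal{G}$, using Proposition \ref{quot} to rule out that a (super-)Tannakian quotient could force a smaller $\mathcal{H}$ while $\Psi$ is non-degenerate, and conversely that degeneracy produces an honest proper Tannakian subcategory hence a proper descent subscheme; (v) combine with the even-case \cite[Proposition 6.7]{G} applied to $\mathcal{G}_0$ and the Clifford-algebra analysis of the odd part via \eqref{tensdecomp}. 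The main obstacle I anticipate is step (iv): carefully identifying ``the minimal closed supergroup subscheme to which $\mathcal{J}$ descends'' with ``the subgroup $\mathcal{H}$ in the classifying pair'' and verifying that Proposition \ref{quot} applies — i.e. that $(\sRep(\mathcal{G}),$ twisted symmetric structure$)$ is finitely tensor-generated and super-Tannakian so that any symmetric tensor quotient is again super-Tannakian, which is what forbids a non-degenerate $\Psi$ on $\mathcal{G}$ from descending further. Once that identification is in hand the two directions of the ``iff'' both fall out of the classification in Theorem \ref{modgfin} and Corollary \ref{twistsfinite}.
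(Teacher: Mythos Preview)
Your direction ``minimal $\Rightarrow$ non-degenerate'' is essentially the paper's: attach to $\mathcal{J}$ the pair $(\mathcal{H},\overline{\mathcal{J}})$ from Corollary~\ref{twistsfinite}; since $\mathcal{J}$ lies in the image of $k\mathcal{H}$ and is minimal, $\mathcal{H}=\mathcal{G}$. Fine.

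The reverse direction, however, has a genuine gap. Your step (iii) asserts that ``$\mathcal{J}$ minimal $\iff$ $\mathcal{J}$ does not descend (up to gauge) to any proper $k\mathcal{H}$'', calling this a standard reformulation. One implication is clear (if $\mathcal{J}\in k\mathcal{H}^{\otimes 2}$ with $\mathcal{H}\subsetneq\mathcal{G}$ then the tensorands of $\mathcal{J}_{21}^{-1}\mathcal{J}$ lie in $k\mathcal{H}$, so $\mathcal{J}$ is not minimal). But the converse is exactly the hard content. If $\mathcal{J}$ is not minimal, you only know that the tensorands of $R=\mathcal{J}_{21}^{-1}\mathcal{J}$ span a proper sub-Hopf-superalgebra $\mathcal{A}\subsetneq k\mathcal{G}$. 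You do \emph{not} know a priori that $\mathcal{A}$ is of the form $k\mathcal{H}$, nor that $\mathcal{J}$ itself (as opposed to $R$) lives in $\mathcal{A}\otimes\mathcal{A}$ up to gauge. Your step (iv) gestures at Proposition~\ref{quot} but applies it to the wrong object (``$(\sRep(\mathcal{G}))^{\mathcal{J}}$'' is not well-defined as stated; the twist changes the fiber functor, not the symmetric category), and the proposed reduction to $\mathcal{G}_0$ plus a Clifford-algebra analysis via \eqref{tensdecomp} in step (v) is neither needed nor sufficient to close this gap.

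What the paper actually does for non-degenerate $\Rightarrow$ minimal is: take the minimal triangular sub-Hopf-superalgebra $(\mathcal{A},R)\subset((k\mathcal{G})^{\mathcal{J}},R)$; apply Proposition~\ref{quot} to the surjective symmetric tensor functor $\sRep(\mathcal{G})\twoheadrightarrow\sRep(\mathcal{A})$ to conclude $\sRep(\mathcal{A})\cong\sRep(\mathcal{H})$ for some closed $\mathcal{H}\subset\mathcal{G}$, hence $(\mathcal{A},R)\cong((k\mathcal{H})^{\mathcal{I}},\mathcal{I}_{21}^{-1}\mathcal{I})$ for some twist $\mathcal{I}$; then observe that $\mathcal{J}\mathcal{I}^{-1}$ is a \emph{symmetric} twist for $k\mathcal{G}$, hence gauge-trivial by \cite[Theorem~3.2]{DM}. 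This last step is the crucial ingredient you are missing: it is what lets you conclude that $(\mathcal{G},\mathcal{J})$ and $(\mathcal{H},\mathcal{I})$ determine the same conjugacy class in Corollary~\ref{twistsfinite}, whence $\mathcal{H}=\mathcal{G}$ and $\mathcal{J}$ is minimal. Without invoking the triviality of symmetric twists you cannot pass from ``$R$ lives in $\mathcal{A}$'' to ``$\mathcal{J}$ descends to $\mathcal{H}$'', and your argument does not close.
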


\begin{proof}
Suppose $\mathcal{J}$ is minimal. By Corollary \ref{twistsfinite}, there exist a closed supergroup subscheme  $\mathcal{H}\subset\mathcal{G}$ and a non-degenerate twist
$\overline{\mathcal{J}}$ for $k\mathcal{H}$, such that the image of
$\overline{\mathcal{J}}$ under the embedding
$(k\mathcal{H})_{\overline{\mathcal{J}}}\hookrightarrow (k\mathcal{G})_{\mathcal{J}}$ is $\mathcal{J}$. Since
$\mathcal{J}$ is minimal and $\mathcal{H}\subset \mathcal{G}$, it follows that
$\mathcal{H}=\mathcal{G}$.

Conversely, suppose $\mathcal{J}$ is non-degenerate.
Let $(\mathcal{A},\mathcal{J}_{21}^{-1}\mathcal{J})$ be the minimal triangular Hopf sub-superalgebra of
$((k\mathcal{G})^{\mathcal{J}},\mathcal{J}_{21}^{-1}\mathcal{J})$. The restriction functor $\sRep(\mathcal{G})\surj \sRep(\mathcal{A})$ is a surjective symmetric tensor functor. Thus by Proposition \ref{quot},
$\sRep(\mathcal{A})$ is equivalent to $\sRep(\mathcal{H},u)$, as a symmetric tensor category, for some closed supergroup subscheme  $\mathcal{H}\subset\mathcal{G}$.
Now, it is a standard fact
(see, e.g., \cite{G1}) that such an equivalence functor gives rise to a twist $\mathcal{I}\in
(k\mathcal{H})^{\ot 2}$ and an isomorphism of triangular Hopf superalgebras
$
((k\mathcal{H})^\mathcal{I},\mathcal{I}_{21}^{-1}\mathcal{I})\xrightarrow{\cong} (\mathcal{A},\mathcal{J}_{21}^{-1}\mathcal{J})$.

We therefore get an injective homomorphism of triangular Hopf superalgebras
$((k\mathcal{H})^\mathcal{I},\mathcal{I}_{21}^{-1}\mathcal{I})\inj ((k\mathcal{G})^\mathcal{J},\mathcal{J}_{21}^{-1}\mathcal{J})$, which implies
that $\mathcal{J}\mathcal{I}^{-1}$ is a symmetric twist for $k\mathcal{G}$. But by \cite[Theorem 3.2]{DM}, this implies that $\mathcal{J}\mathcal{I}^{-1}$ is gauge equivalent to $1\ot 1$. Therefore, the triangular Hopf superalgebras
$((k\mathcal{G})^{\mathcal{J}\mathcal{I}^{-1}},\mathcal{I}_{21}\mathcal{J}_{21}^{-1}\mathcal{J}\mathcal{I}^{-1})$ and $(k\mathcal{G},1\ot 1)$ are
isomorphic. In other words, $((k\mathcal{G})^{\mathcal{I}},\mathcal{I}_{21}^{-1}\mathcal{I})$ and
$((k\mathcal{G})^{\mathcal{J}},\mathcal{J}_{21}^{-1}\mathcal{J})$ are isomorphic as triangular Hopf superalgebras, i.e., the pairs $(\mathcal{G},\mathcal{J})$ and $(\mathcal{H},\mathcal{I})$ are conjugate. We thus conclude from Corollary \ref{twistsfinite} that $\mathcal{H}=\mathcal{G}$, and hence that $\mathcal{J}$ is a minimal twist, as required.
\end{proof}

\begin{remark}
Corollary \ref{twistsfinite} and Proposition \ref{minond} extend \cite[Corollary 6.3 \& Proposition 6.7]{G} to the super case.
\end{remark}

\subsection{Triangular Hopf algebras}\label{Triangular Hopf algebras} 
Let $(H,R)$ be a finite dimensional triangular Hopf algebra with the Chevalley property over $k$. Recall that by \cite[Corollary 4.1]{EG3}, $(H,R)$ is {\em twist} equivalent to a finite dimensional triangular Hopf algebra with $R$-matrix of rank $\le 2$ (i.e., to a modified supergroup algebra \cite[Definition 3.3.4]{AEG}). Hence by \cite[Corollary 3.3.3]{AEG}, $(H,R)$ corresponds to a unique pair $(\mathcal{G},\epsilon)$, where $\mathcal{G}$ is a finite supergroup scheme over $k$ (see \ref{gsch}).  
Thus Corollary \ref{twistsfinite} implies the following classification result, which extends \cite[Theorem 5.1]{EG3} to arbitrary finite dimensional triangular Hopf algebras with the Chevalley property over $k$. 

\begin{theorem}\label{classtrhas}
The following three sets are in canonical bijection with each other:
\begin{enumerate}
\item
Isomorphism classes of finite dimensional triangular Hopf algebras $(H,R)$ with the Chevalley property over $k$.
\item
Conjugacy classes of quadruples $(\mathcal{G},\mathcal{H},\mathcal{J},\epsilon)$, where $\mathcal{G}$ is a finite supergroup scheme over $k$, $\mathcal{H}\subset \mathcal{G}$
is a closed supergroup subscheme, $\mathcal{J}$ is a minimal twist for $k\mathcal{H}$, and $\epsilon\in\mathcal{G}(k)$ is a central element of order $\le 2$ acting by $-1$ on $\g_1$.
\item 
Conjugacy classes of quadruples $(\mathcal{G},\mathcal{H},\Psi,\epsilon)$, where $\mathcal{G}$ is a finite supergroup scheme over $k$, 
$\mathcal{H}\subset \mathcal{G}$ is a closed supergroup subscheme, $\Psi$ 
is a non-degenerate even $2$-cocycle on $\mathcal{H}$ with coefficients in $\mathbb{G}_m$, and $\epsilon\in\mathcal{G}(k)$ is a central element of order $\le 2$ acting by $-1$ on $\g_1$.
\end{enumerate}
\end{theorem}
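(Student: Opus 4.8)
The plan is to assemble the statement from three ingredients that are already available: the structure theory of triangular Hopf algebras with the Chevalley property (from \cite{EG3, AEG}), the classification of twists for $k\mathcal{G}$ (Corollary \ref{twistsfinite}), and the identity ``minimal $=$ non-degenerate'' for twists of a finite supergroup algebra (Proposition \ref{minond}). First I would reduce item (1) to a twist datum over a pair $(\mathcal{G},\epsilon)$. Given a finite dimensional triangular $(H,R)$ with the Chevalley property, \cite[Corollary 4.1]{EG3} says it is twist equivalent to a triangular Hopf algebra with $R$-matrix of rank $\le 2$, i.e.\ to a modified supergroup algebra, and by \cite[Corollary 3.3.3]{AEG} the latter is classified up to isomorphism by a pair $(\mathcal{G},\epsilon)$ as in the theorem. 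Concretely this means $H\cong (k\mathcal{G})^{\mathcal{J}}$ for some twist $\mathcal{J}$ of $k\mathcal{G}$, with $R$ the image of the canonical $R$-matrix $R_\epsilon$ of the modified supergroup algebra under the twist. I would then verify that $(\mathcal{G},\epsilon)$ is an intrinsic invariant of $(H,R)$: the braided tensor category $(\Rep(H),R)$ is unchanged under twisting and is the symmetric category $\sRep(\mathcal{G})$ with the canonical symmetric structure attached to $\epsilon$, so the super-Tannakian reconstruction underlying \cite[Corollary 4.1]{EG3} (valid in characteristic $0$ and $p>2$) recovers $(\mathcal{G},\epsilon)$ up to isomorphism. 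Hence isomorphism classes of $(H,R)$ are in bijection with conjugacy classes of triples $(\mathcal{G},\epsilon,\mathcal{J})$, where two twists are identified when they differ by a gauge transformation followed by an automorphism of $(\mathcal{G},\epsilon)$ (in particular conjugation by an element of $\mathcal{G}(k)$).

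Next I would feed this into Corollary \ref{twistsfinite}: for a fixed $(\mathcal{G},\epsilon)$, gauge equivalence classes of twists for $k\mathcal{G}$ are in canonical bijection with conjugacy classes of pairs $(\mathcal{H},\mathcal{J})$, where $\mathcal{H}\subset\mathcal{G}$ is a closed supergroup subscheme and $\mathcal{J}$ is a non-degenerate twist for $k\mathcal{H}$, and equally with conjugacy classes of pairs $(\mathcal{H},\Psi)$, where $\Psi$ is a non-degenerate even $2$-cocycle on $\mathcal{H}$. Carrying the spectator datum $\epsilon$ along the bijection, conjugacy classes of triples $(\mathcal{G},\epsilon,\mathcal{J})$ turn into conjugacy classes of quadruples $(\mathcal{G},\mathcal{H},\mathcal{J},\epsilon)$ and $(\mathcal{G},\mathcal{H},\Psi,\epsilon)$, and Proposition \ref{minond} lets me replace ``$\mathcal{J}$ non-degenerate twist for $k\mathcal{H}$'' by ``$\mathcal{J}$ minimal twist for $k\mathcal{H}$''. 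This produces the bijections of (1) with (2) and with (3). The bijection between (2) and (3) can also be seen directly, without reference to (1): by \cite[Theorem 5.7]{AEGN} non-degenerate twists for $k\mathcal{H}$ correspond to non-degenerate even $2$-cocycles on $\mathcal{H}$ (twists for $\mathcal{O}(\mathcal{H})$), compatibly with conjugation, and Proposition \ref{minond} identifies the non-degenerate twists of $k\mathcal{H}$ with the minimal ones.

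The part I expect to be the main obstacle is not any computation but the careful bookkeeping of equivalence relations: one must make sure that the single word ``conjugacy'' refers to the same notion across (1), (2) and (3) — in particular that the automorphisms of $(\mathcal{G},\epsilon)$ used to identify twists in the first step are precisely the ones used to conjugate the quadruples — and one must be confident that $(\mathcal{G},\epsilon)$ is a \emph{complete} twist invariant of $(H,R)$, which is exactly the substance of \cite[Corollary 4.1]{EG3}. Once these identifications are pinned down, the theorem is simply the composite of the cited bijections.
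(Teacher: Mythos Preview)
Your proposal is correct and follows essentially the same route as the paper: the paper presents the theorem as an immediate consequence of \cite[Corollary 4.1]{EG3} and \cite[Corollary 3.3.3]{AEG} (reducing $(H,R)$ to a pair $(\mathcal{G},\epsilon)$ together with a twist) combined with Corollary \ref{twistsfinite} and, implicitly, Proposition \ref{minond} to pass between ``non-degenerate'' and ``minimal''. Your treatment is simply more explicit about the bookkeeping of conjugacy and about why $(\mathcal{G},\epsilon)$ is a complete twist invariant, but these are exactly the points the paper absorbs into its citations.
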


\begin{remark}
The correspondence between (1) and (2) in Theorem \ref{classtrhas} is given by $(H,R)=\overline{((k\mathcal{G})^{\mathcal{J}},\epsilon)}$ (see \cite[Theorem 3.3.1]{AEG}; see also \ref{The tensor category sCohfG}). A $2$-cocycle $\Psi$ on $\mathcal{H}$ as in Theorem \ref{classtrhas}(3) determines a module category over $\sRep(\mathcal{G})$ of rank $1$, i.e., a tensor structure on the forgetful functor $\sRep(\mathcal{G})\to \Vect$, thus a twist $\mathcal{J}$ for $k\mathcal{G}$ supported on $\mathcal{H}$.
\end{remark}

\end{document}